\documentclass[11pt]{amsart}

\usepackage{amsmath, amsthm, amssymb}
\usepackage{lmodern}
\usepackage{palatino}                       
\usepackage[bigdelims,vvarbb]{newpxmath}    
\usepackage[scaled=0.95]{inconsolata}       
\linespread{1.12}                           
\usepackage[T1]{fontenc}                    

\usepackage{comment}

\usepackage[abs]{overpic}		  
\usepackage{import}
\usepackage{transparent}

\usepackage{xcolor}
\definecolor{lred}{RGB}{226, 106, 106}
\definecolor{nred}{RGB}{237, 28, 36}
\definecolor{lblue}{RGB}{52, 152, 219}
\definecolor{nblue}{RGB}{0, 174, 239}
\definecolor{lyellow}{RGB}{232, 197, 91}
\definecolor{dgreen}{RGB}{0, 148, 68}
\definecolor{l1yellow}{RGB}{217, 224, 33}
\definecolor{lgrey}{RGB}{179, 179, 179}
\definecolor{indigo}{rgb}{0.29, 0.0, 0.51}  
\usepackage[colorlinks, urlcolor=indigo, linkcolor=indigo, citecolor=indigo]{hyperref}

\usepackage[hcentering, vcentering, total={5.9in, 8.2in}]{geometry}  

\usepackage{tikz-cd}
\usetikzlibrary{cd}
\tikzset{
  symbol/.style={
    draw=none,
    every to/.append style={
      edge node={node [sloped, allow upside down, auto=false]{$#1$}}
    },
  },
}
\usetikzlibrary{trees}

\theoremstyle{plain}

\newtheorem{theorem}{Theorem}

\newtheorem{proposition}[theorem]{Proposition}
\newtheorem{lemma}[theorem]{Lemma}

\newtheorem{claim}{Claim}

\theoremstyle{definition}
\newtheorem{definition}[theorem]{Definition}

\theoremstyle{remark}
\newtheorem{remark}[theorem]{Remark}

\numberwithin{theorem}{section}

\newcommand{\Q}{\mathbb{Q}}           
\newcommand{\Z}{\mathbb{Z}}           

\makeatletter
\newcommand*\bigcdot{\mathpalette\bigcdot@{0.6}}
\newcommand*\bigcdot@[2]{\mathbin{\vcenter{\hbox{\scalebox{#2}{$\m@th#1\bullet$}}}}}
\makeatother



\newcommand{{\def\svgwidth{1,6ex}\,\,
\begingroup%
  \makeatletter%
  \providecommand\color[2][]{%
    \errmessage{(Inkscape) Color is used for the text in Inkscape, but the package 'color.sty' is not loaded}%
    \renewcommand\color[2][]{}%
  }%
  \providecommand\transparent[1]{%
    \errmessage{(Inkscape) Transparency is used (non-zero) for the text in Inkscape, but the package 'transparent.sty' is not loaded}%
    \renewcommand\transparent[1]{}%
  }%
  \ifx\svgwidth\undefined%
    \setlength{\unitlength}{49.54732089bp}%
    \ifx\svgscale\undefined%
      \relax%
    \else%
      \setlength{\unitlength}{\unitlength * \real{\svgscale}}%
    \fi%
  \else%
    \setlength{\unitlength}{\svgwidth}%
  \fi%
  \global\let\svgwidth\undefined%
  \global\let\svgscale\undefined%
  \makeatother%
  \begin{picture}(1,1.00059637)%
    \put(0,0){\includegraphics[width=\unitlength,page=1]{PushOff.pdf}}%
  \end{picture}%
\endgroup%
\,\,}}{{\def\svgwidth{1,6ex}\,\,
\begingroup%
  \makeatletter%
  \providecommand\color[2][]{%
    \errmessage{(Inkscape) Color is used for the text in Inkscape, but the package 'color.sty' is not loaded}%
    \renewcommand\color[2][]{}%
  }%
  \providecommand\transparent[1]{%
    \errmessage{(Inkscape) Transparency is used (non-zero) for the text in Inkscape, but the package 'transparent.sty' is not loaded}%
    \renewcommand\transparent[1]{}%
  }%
  \ifx\svgwidth\undefined%
    \setlength{\unitlength}{49.54732089bp}%
    \ifx\svgscale\undefined%
      \relax%
    \else%
      \setlength{\unitlength}{\unitlength * \real{\svgscale}}%
    \fi%
  \else%
    \setlength{\unitlength}{\svgwidth}%
  \fi%
  \global\let\svgwidth\undefined%
  \global\let\svgscale\undefined%
  \makeatother%
  \begin{picture}(1,1.00059637)%
    \put(0,0){\includegraphics[width=\unitlength,page=1]{PushOff.pdf}}%
  \end{picture}%
\endgroup%
\,\,}} 






\DeclareFontFamily{U} {cmr}{}
\DeclareFontShape{U}{cmr}{m}{n}{
  <-6> cmr5
  <6-7> cmr6
  <7-8> cmr7
  <8-9> cmr8
  <9-10> cmr9
  <10-12> cmr8
  <12-> cmr9}{}
\DeclareSymbolFont{Xcmr} {U} {cmr}{m}{n}

\title{Contact surgery distance}

\author{Marc Kegel}
\address{Universidad de Sevilla, Dpto.\ de Álgebra,
Avda.\ Reina Mercedes s/n,
41012 Sevilla}
\email{mkegel@us.es, kegelmarc87@gmail.com}

\author{Isacco Nonino}
\address{School of Mathematics and Statistics, University of Glasgow}
\email{2754452N@student.gla.ac.uk} 

\author{Monika Yadav}
\address{Indian Statistical Institute, Kolkata, India}
\email{monika.yadav9413@gmail.com}

\date{\today}

\begin{document}

\begin{abstract}
In this article, we define the \emph{contact surgery distance} of two contact 3-manifolds $(M,\xi)$ and $(M',\xi')$ as the minimal number of contact surgeries needed to obtain $(M,\xi)$ from $(M',\xi')$. Our main result states that the contact surgery distance between two contact $3$-manifolds is at most $5$ larger than the topological surgery distance between the underlying smooth manifolds.
As a byproduct of our proof, we classify the rational homology $3$-spheres on which the $d_3$-invariant of a $2$-plane field already determines its $\Gamma$-invariant and Euler class. 
\end{abstract}

\makeatletter
\@namedef{subjclassname@2020}{%
  \textup{2020} Mathematics Subject Classification}
\makeatother

\subjclass[2020]{53D35; 53D10, 57K10, 57R65, 57K33} 

\maketitle

\section{Introduction}

A seminal result in 3-dimensional contact topology, due to Ding and Geiges, states that every oriented, closed, contact 3-manifold with coorientable contact structure can be obtained by contact $(\pm1)$-surgery on a Legendrian link $\mathcal{L}$ in the standard contact $3$-sphere $(S^3,\xi_{std})$ \cite{DingGeiges}. Thus a natural complexity notion for a given contact 3-manifold $(M,\xi)$ is its \emph{contact surgery number} $cs_{\pm1}(M,\xi)$, i.e.\ the minimal number of components of a Legendrian link $\mathcal{L}$ in $(S^3,\xi_{std})$ needed to obtain $(M,\xi)$ via contact $(\pm1)$-surgery along $\mathcal{L}$. The study of contact surgery numbers was initiated in \cite{KegelEtnyreOnaran,ChatterjeeKegel,KegelYadav}. 

Since we can reverse a contact $(\pm1)$-surgery by a contact $(\mp1)$-surgery it follows also that any contact $3$-manifold can be obtained from any other contact $3$-manifold by contact $(\pm1)$-surgery. We define the \emph{contact surgery distance} $cs_{\pm1}((M,\xi),(N,\xi'))$ of two contact $3$-manifolds $(M,\xi)$ and $(N,\xi')$ as the minimal number of components of a Legendrian link $\mathcal{L}$ in $(M,\xi)$ such that $(N,\xi')$ is obtained via contact $(\pm1)$-surgery along $\mathcal{L}$. 

We emphasise that the contact surgery distance $cs_{\pm1}((M,\xi),(N,\xi'))$ is bounded from below by the \emph{integral topological surgery distance} $s_\Z(M,N)$, i.e.\ the minimal number of integral smooth surgeries from $M$ to $N$. (We refer to \cite{Ichihara-Saito} or Section~\ref{sc:surgery_distance} for a more detailed discussion, for example, why $s_\Z$ is well-defined.) The following is our main result.

\begin{theorem}\label{thm:contact_surgery_distance}
Let $(M,\xi)$ and $(N,\xi')$ be two closed, oriented, contact 3-manifolds. Then 
$$s_{\Z}(M,N)\leq cs_{\pm1}\big((M,\xi),(N,\xi')\big)\leq s_{\Z}(M,N)+5.$$
\end{theorem}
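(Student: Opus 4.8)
The plan is to prove the two inequalities separately. The lower bound $s_\Z(M,N)\le cs_{\pm1}\big((M,\xi),(N,\xi')\big)$ is immediate: after forgetting the contact structure, a contact $(\pm1)$-surgery along a Legendrian knot is an integral smooth surgery along the underlying knot, since the contact framing differs from the Seifert framing by the integer $\tb$. Thus a contact $(\pm1)$-surgery along an $n$-component Legendrian link is an $n$-component integral smooth surgery, so any Legendrian link realising $(N,\xi')$ from $(M,\xi)$ witnesses a smooth integral surgery from $M$ to $N$, and the minimal such count is by definition $s_\Z(M,N)$.

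For the upper bound, write $n=s_\Z(M,N)$ and fix a smooth integral surgery link $L=L_1\cup\cdots\cup L_n\subset M$ with coefficients $p_i\in\Z$ whose surgery is $N$. First I would make $L$ Legendrian in $(M,\xi)$ and realise the smooth surgery by contact $(\pm1)$-surgeries: after stabilising $L_i$ so that its contact framing sits one below $p_i$, a single contact $(-1)$-surgery realises the coefficient $p_i$ whenever $p_i\le \tbb(L_i)+1$, and the remaining (large positive) coefficients are absorbed by a single contact $(+1)$-surgery, after possibly adjusting the presentation within its surgery-equivalence class so that the whole topological surgery is realised by exactly $n$ contact $(\pm1)$-surgeries. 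This produces $(N,\eta)$ for a contact structure $\eta$ that is smoothly $N$. The key extra input here is that the rotation numbers of the Legendrian representatives are free parameters: since surgery on an $n$-component link presents $H_1(N)$ by the $n\times n$ linking matrix, the $n$ rotation numbers carry enough freedom (up to the usual parity constraints) to prescribe the $\spinc$-structure, equivalently the Euler class, equivalently Gompf's $\Gamma$-invariant, of the $2$-plane field $\eta$, and I would choose them so that this two-dimensional obstruction already agrees with that of $\xi'$.

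It then remains to prove a universal estimate of the form $cs_{\pm1}\big((N,\eta),(N,\xi')\big)\le 5$ for two contact structures on the \emph{same} smooth manifold that share their $\spinc$-structure. At this stage the only homotopy-theoretic discrepancy left is the three-dimensional invariant $d_3(\eta)-d_3(\xi')\in\Q$, together with the passage from a homotopy of plane fields to an honest isotopy of contact structures. I would correct $d_3$ by a bounded number of contact surgeries supported in a Darboux ball, each of which connect-sums with an overtwisted $(S^3,\zeta)$ and shifts $d_3$ by a controlled amount while leaving $N$ and the Euler class untouched, and then compare the resulting structures. Once $\eta$ and $\xi'$ have been arranged to share $\spinc$-structure and $d_3$, Eliashberg's classification identifies them in the overtwisted regime; the worst-case count of these corrective moves is exactly what produces the additive constant $5$.

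The hard part will be this final bookkeeping: bounding the number of corrective surgeries by a constant independent of $N$ and of the complexity of $\eta$ and $\xi'$. Two points need care. First, the changes in $d_3$ and in the Euler class effected by a contact surgery are coupled through the rotation number and the linking data, so adjusting one invariant can disturb the other; controlling this interaction is precisely where the byproduct enters, namely the classification of those rational homology $3$-spheres on which $d_3$ alone already determines $\Gamma$ and the Euler class. On such manifolds only a single invariant must be corrected and fewer moves suffice, whereas on general manifolds the Euler class and $d_3$ must be treated independently; it is the analysis of this dichotomy that pins the constant down to $5$. Second, one must ensure that the construction lands on the isotopy class of $\xi'$ itself rather than on a merely homotopic structure, which is delicate when $\xi'$ is tight; here the interplay between the freedom to make the intermediate structures overtwisted, Eliashberg's theorem, and a direct realisation of $\xi'$ along the final surgeries is what closes the gap.
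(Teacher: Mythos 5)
Your lower bound is fine, and your overall shape (realise the topological surgeries contactly, then repair the homotopical invariants, then invoke Eliashberg) is the right one, but two steps have genuine gaps. First, you cannot in general realise the $n=s_\Z(M,N)$ integral coefficients by exactly $n$ contact $(\pm1)$-surgeries \emph{inside the given} $(M,\xi)$: a contact $(\pm1)$-surgery on a Legendrian representative of $L_i$ has topological coefficient $\tb\pm1$, stabilisation only \emph{decreases} $\tb$, and when $\xi$ is tight the maximal Thurston--Bennequin invariant is bounded above (Bennequin-type inequalities), so large positive coefficients $p_i$ are simply unreachable. Your proposed fix, ``adjusting the presentation within its surgery-equivalence class,'' changes the number of link components (blow-ups add components, Rolfsen twists destroy integrality), so it destroys the count you need. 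The paper resolves exactly this point by spending one of the five extra surgeries \emph{first}: a single contact $(+1)$-surgery in a Darboux ball makes $(M,\xi)$ overtwisted, after which one can take a \emph{loose} Legendrian realisation of the surgery link and connect-sum components with the boundary of the overtwisted disk to realise \emph{any} contact framing; this is why the overtwisting step cannot be skipped.

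Second, your endgame does not close. Eliashberg's classification identifies \emph{overtwisted} structures with equal invariants, but the target $\xi'$ may be tight, so matching $\Gamma$ (or $\spinc$) and $d_3$ can only land you on a plane field homotopic to $\xi'$, not on $\xi'$ itself --- you flag this but offer no mechanism. The paper's mechanism is concrete: choose a transverse knot $T\subset(N,\xi')$ with $[T]=\Gamma(\xi',\mathfrak{s})-\Gamma(\xi'_{ot},\mathfrak{s})$, so that by Proposition~\ref{prop:lutz_twist_rationally_null_homologous} the Lutz twist $\xi'_T$ is overtwisted with the \emph{same} $\Gamma$ as $\xi'_{ot}$; then $(N,\xi'_{ot})$ and $(N,\xi'_T)$ differ by connected sum with an overtwisted $S^3$, costing at most two surgeries, and the final two surgeries are the contact $(-1)$-surgeries cancelling the two contact $(+1)$-surgeries that realise the Lutz twist (Cancellation Lemma), which is precisely what guarantees arriving at $(N,\xi')$ on the nose. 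This gives the count $1+n+2+2=n+5$. Note also two misreadings in your outline: the byproduct classification (Theorem~\ref{thm:d_3determinesinvariants}) is \emph{not} an input to the main theorem --- the input is the Lutz twist formula of Proposition~\ref{prop:lutz_twist_rationally_null_homologous} --- and since $N$ is an arbitrary $3$-manifold (not a rational homology sphere), $d_3$ need not be a well-defined rational number when the Euler class is non-torsion, so an argument pivoting on correcting ``$d_3(\eta)-d_3(\xi')\in\Q$'' does not make sense in the required generality; the paper's route through $\Gamma$-matching of overtwisted structures avoids this entirely.
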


There are examples of contact 3-spheres for which $s_\Z$ is different from $cs_{\pm1}$, see for example \cite{KegelEtnyreOnaran}. In fact, only the standard tight contact structure $\xi_{std}$ on $S^3$ has contact surgery number 0. Hence, we have an infinite collection of examples of contact 3-manifolds for which the contact surgery distance is strictly larger than the topological surgery distance. It remains open if the upper bound from Theorem~\ref{thm:contact_surgery_distance} is sharp. 

In Section~\ref{sec:lens_spaces}, we study contact surgery distance of contact lens spaces. In particular, we use results of Plamenevskaya~\cite{Plamenevskaya} to obtain lower bounds that do not come from the underlying smooth manifolds.

\begin{remark}
    As for the contact surgery numbers, one could ask for the surgery coefficients to be more general; for example, one can consider all non-vanishing rational surgery coefficients. In Section~\ref{sc:surgery_distance} we define such surgery 'distances', but since one cannot reverse a general rational contact surgery by a single contact surgery, this new notion is not symmetric anymore. We refer to Section~\ref{sc:surgery_distance} for details. Nevertheless, we are able to prove a generalisation of Theorem~\ref{thm:contact_surgery_distance} for general contact surgery coefficients, see Theorem \ref{thm:maingeneral}.
\end{remark}
In order to prove Theorem~\ref{thm:contact_surgery_distance}, we analyse how the homotopical invariants of a contact structure change under a Lutz twist. Recall that the overtwisted contact structures on a rational homology $3$-sphere are determined by the underlying tangential 2-plane fields~\cite{Eliashberg_OT}, which in turn are completely classified by the two homotopical invariants: Gompf’s $\Gamma$-invariant (whose double gives the Euler class) and the $d_3$-invariant~\cite{Gompf_Stein}. On the other hand, Lutz twisting is an operation performed on a transverse knot $T$ in a contact $3$-manifold $M$ that does not change the underlying smooth manifold but changes the underlying $2$-plane field (and makes the contact structure always overtwisted). We refer, for example, to~\cite{Geiges} for a detailed discussion on Lutz twists. For the proof of Theorem~\ref{thm:contact_surgery_distance}, we obtain the following result, which might be of independent interest and generalises similar results obtained in~\cite{Lutz_thesis,DingGeigesStipsciz,Geiges} for the relative obstruction classes of two contact structures.

\begin{proposition}\label{prop:lutz_twist_rationally_null_homologous} 
    Let $(M,\xi)$ be a contact manifold with torsion Euler class $e(\xi)$ and let $T$ a rationally null-homologous, transverse knot in $M$ with rational self-linking number $sl_{\mathbb{Q}}(T)$. Let $\xi_T$ be the contact structure on $M$ obtained by performing a Lutz twist along $T$. Then their $\Gamma$-invariants are related by
        \begin{align*}
        \Gamma(\xi_T,\mathfrak{s})-\Gamma(\xi,\mathfrak{s})=&-[T]\in H_1(M;\Z).
    \end{align*}
    If in addition $M$ is a rational homology $3$-sphere, then the difference of the $d_3$-invariants is
    \begin{align*}
        d_3(\xi_T)-d_3(\xi)=&-sl_{\mathbb{Q}}(T)\in\Z.
    \end{align*}
\end{proposition}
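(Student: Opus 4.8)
The plan is to reduce both equalities to a single computation in the standard local model of a full Lutz twist, and then to translate the resulting \emph{relative} obstruction data into the \emph{absolute} invariants. Fix a tubular neighbourhood $\nu(T)\cong S^1\times D^2$ of the transverse knot on which $\xi$ and $\xi_T$ are in standard Lutz normal forms and agree with one another outside $\nu(T)$. Fixing the spin structure $\mathfrak{s}$ determines a homotopy class of trivialisation of $TM$, and I would represent $\xi$ and $\xi_T$ by their co-oriented unit normal vector fields $v,v_T\colon M\to S^2$ in this trivialisation; by construction $v=v_T$ on $M\setminus\nu(T)$.

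For the $\Gamma$-invariant I would argue by primary obstruction theory alone. Since $\Gamma(\xi,\mathfrak{s})$ is the Poincaré dual of the primary obstruction to homotoping $v$ to the reference section over the $2$-skeleton, the difference $\Gamma(\xi_T,\mathfrak{s})-\Gamma(\xi,\mathfrak{s})$ is Poincaré dual to the relative difference cocycle of $v_T$ and $v$, which is supported in $\nu(T)$ because the two maps agree elsewhere. The only $2$-cells on which this cocycle is non-zero are the meridian disks of $T$, and on such a disk a full Lutz twist makes $v_T$ wind exactly once more around $S^2$ than $v$ relative to their common boundary values. Hence the difference class pairs to $\pm1$ with a meridian disk and to $0$ with everything else, so its Poincaré dual is $\pm[T]\in H_1(M;\Z)$; tracking the orientation convention built into the Lutz twist fixes the sign as $-[T]$. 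This step uses only that $T$ carries a well-defined integral homology class, not that it is rationally null-homologous.

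For the $d_3$-invariant, under the additional hypothesis that $M$ is a rational homology sphere (so $\Gamma$ is torsion and $d_3\in\Q$ is defined), I would pass to four dimensions. Choose an almost complex filling $(X,J)$ of $(M,\xi)$ with $\xi$ the field of complex tangencies, so that $d_3(\xi)=\tfrac14\big(c_1(J)^2-3\sigma(X)-2\chi(X)\big)$, where $c_1(J)^2\in\Q$ is the self-intersection of the relative class pinned down by the trivialisation on the torsion boundary. Keeping the smooth manifold $X$ fixed, I would deform $J$ to $J_T$ in a collar of $\nu(T)\subset\partial X$ so that the new field of complex tangencies is $\xi_T$; the Lutz normal form furnishes exactly such a local deformation. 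As $X$, $\sigma$ and $\chi$ are then unchanged, $d_3(\xi_T)-d_3(\xi)=\tfrac14\big(c_1(J_T)^2-c_1(J)^2\big)$, and by the first part the boundary restriction of $\delta:=c_1(J_T)-c_1(J)$ equals $e(\xi_T)-e(\xi)=-2\,\PD[T]$. Evaluating the induced change in the rational self-intersection against a rational Seifert surface for $T$ then produces $-sl_{\mathbb{Q}}(T)$, with the rational self-linking entering precisely as the framing defect between the contact framing carried by the Lutz model and the rational Seifert framing.

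The main obstacle I anticipate is this last step. Because $\xi$ and $\xi_T$ already have different $\Gamma$-invariants, they are not homotopic over the $2$-skeleton, so the $d_3$-difference is not a naive relative Hopf invariant and the rational corrections genuinely matter: one must verify that the local deformation of $J$ leaves $\sigma$ and $\chi$ untouched while shifting the relative class $c_1(J)$ by the Lefschetz dual of $T$, and then identify the induced $\Q$-valued self-intersection with $sl_{\mathbb{Q}}(T)$, including the sign and the $1/d$ normalisation coming from the order $d$ of $[T]$ in $H_1(M;\Z)$. As consistency checks, the relation $\PD(e(\xi))=2\Gamma(\xi,\mathfrak{s})$ reproduces $e(\xi_T)-e(\xi)=-2\,\PD[T]$ from the first part, and in the null-homologous case $sl_{\mathbb{Q}}(T)$ collapses to the ordinary self-linking number, recovering the relative formulas of Lutz, Ding--Geiges--Stipsicz, and Geiges quoted above.
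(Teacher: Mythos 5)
Your treatment of the $\Gamma$-invariant is a genuinely different route from the paper's, and in outline it is sound. The paper works diagrammatically: it realises the Lutz twist as two contact $(+1)$-surgeries on a Legendrian push-off $L$ and its doubly positively stabilised push-off $L_2$ (Ding--Geiges--Stipsicz), and then computes $\Gamma$ from the enlarged surgery diagram while tracking the characteristic sublink under Kirby moves. You instead argue intrinsically: the difference cocycle of the two plane fields is supported in the Lutz tube, hence is a multiple of $\PD[T]$, and the multiple is computed on a meridian disk. This works, provided you (i) justify that for a fixed spin structure the difference of Gompf's $\Gamma$-invariants equals the Poincar\'e dual of the difference obstruction of the two sections (this is a property of $\Gamma$ one must quote or prove, and it is what lets you bypass the $2$-torsion ambiguity of the Euler class), and (ii) actually carry out the local degree computation \emph{for the correct model}: the operation here is the simple ($\pi$) Lutz twist. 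For a genuine \emph{full} ($2\pi$) Lutz twist, which is the term you use, the difference map on a meridian disk has degree $0$ and the plane field does not change on the $2$-skeleton at all, so your claimed winding number, and hence the formula, would be false for that model.

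The $d_3$ half contains a genuine gap, and it is fatal as written. You propose to keep the almost complex filling $(X,J)$ fixed and deform $J$ only in a collar of $\partial X$ so that the complex tangencies become $\xi_T$. No such deformation exists whenever $[T]\neq 0$: an almost complex structure on a collar $[0,1]\times M$ induces on every slice $\{t\}\times M$ its field of complex tangencies (always a $2$-plane field), and this gives a homotopy of plane fields between the fields induced at the two ends. Since your own first part gives $\Gamma(\xi_T,\mathfrak{s})-\Gamma(\xi,\mathfrak{s})=-[T]\neq 0$, the fields $\xi$ and $\xi_T$ are not homotopic, so no $J_T$ agreeing with $J$ away from the boundary can induce $\xi_T$. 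This is exactly the obstacle you flag, but the fix you suggest --- that a collar-supported deformation shifts the relative class $c_1(J)$ by the Lefschetz dual of $T$ --- is precisely what a collar-supported deformation can never do. To realise $\xi_T$ one must change the four-manifold; the paper does this by attaching the two $2$-handles corresponding to the contact $(+1)$-surgeries on $L$ and $L_2$, and then computes the $d_3$-difference from the enlarged linking matrix $\overline{Q}$ (a Schur-complement argument shows the signature is unchanged while the rank and the number of positive surgeries each increase by two), with $sl_{\Q}(T)$ entering through the Kegel--Durst formulas expressing $tb_{\Q}$ and $rot_{\Q}$ in terms of the linking data. Your final step, evaluating the change of the rational self-intersection ``against a rational Seifert surface'' to produce $-sl_{\Q}(T)$, is asserted rather than proved, and it is where all the actual work lies.
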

As another application of Proposition~\ref{prop:lutz_twist_rationally_null_homologous}, we completely answer a question raised in~\cite{KegelYadav} and determine the rational homology $3$-spheres on which the $d_3$-invariant of a $2$-plane field already determines its $\Gamma$-invariant and Euler class. 

\begin{theorem}\label{thm:d_3determinesinvariants}
    Let $M$ be a rational homology $3$-sphere. Then the $d_3$-invariant of every tangential $2$-plane field $\xi$ on $M$ determines the $\Gamma$-invariant if and only if $H_1(M;\mathbb{Z})$ is isomorphic to $0$ or $\Z_2$.
\end{theorem}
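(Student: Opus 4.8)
The plan is to reduce the statement to a purely algebraic question about the linking form of $M$ and then to settle that question group-theoretically. Throughout write $G = H_1(M;\Z)$, a finite abelian group. Recall that the homotopy classes of tangential $2$-plane fields on the rational homology sphere $M$ are classified by the pair $(\Gamma,d_3)$: fixing $\Gamma(\xi,\mathfrak s)$ (equivalently the induced $\spinc$-structure, since $\PD(e(\xi)) = 2\Gamma$) pins down the class over the $2$-skeleton, and the remaining freedom over the top cell is an affine copy of $\Z$ recorded faithfully by $d_3\in\Q$. Hence the residue $d_3\bmod\Z$ depends only on $\Gamma$, defining a function $\phi\colon G\to\Q/\Z$, and the claim that ``$d_3$ determines $\Gamma$'' is exactly the injectivity of $\phi$. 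First I would make this reduction explicit, recording that every element of $G$ occurs as some $\Gamma$-invariant and that each coset $\phi(\gamma)+\Z$ is fully realised by $d_3$-values; these realisation facts are what allow one, in the non-injective case, to produce two plane fields with distinct $\Gamma$ but a common $d_3$.

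Next I would compute $\phi$ using Proposition~\ref{prop:lutz_twist_rationally_null_homologous}. Fix a base field $\xi_0$ with $\Gamma(\xi_0,\mathfrak s)=0$ (which exists by the realisation statement above) and Lutz twist along a transverse knot $T$ with $[T]=t$; the proposition gives $\Gamma(\xi_T,\mathfrak s)=-t$ and $d_3(\xi_T)=d_3(\xi_0)-sl_\Q(T)$, whence
\[
\phi(-t)-\phi(0)\equiv -sl_\Q(T)\pmod{\Z}.
\]
The crux is therefore to express $sl_\Q(T)\bmod\Z$ through the linking form $\lambda\colon G\times G\to\Q/\Z$. Passing to a Legendrian approximation $L$ of $T$ and using $sl_\Q(T)=\tb_\Q(L)-\rot_\Q(L)$, the rational Thurston--Bennequin term contributes $\tb_\Q(L)\equiv\lambda(t,t)$ (a framed push-off lies in class $t$, and its rational linking with $L$ equals $\lambda(t,t)$ modulo integers), while the rotation term contributes $\rot_\Q(L)\equiv\lambda(\PD(e(\xi)),t)$ directly from the definition of the linking form evaluated on a rational Seifert surface. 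Since $\PD(e(\xi_0))=2\Gamma(\xi_0,\mathfrak s)=0$, this yields $sl_\Q(T)\equiv\lambda(t,t)\pmod\Z$ for knots in the base field, and hence $\phi(\gamma)=\phi(0)-\lambda(\gamma,\gamma)$ for all $\gamma\in G$; the general-base identity $sl_\Q(T)\equiv\lambda(t,t)-2\lambda(\gamma,t)$ gives an internal consistency check against the Lutz relation for arbitrary $\Gamma(\xi)=\gamma$. I expect this identification of $sl_\Q\bmod\Z$ with the linking form to be the main obstacle, chiefly in the rational Seifert surface bookkeeping and in fixing the sign conventions (which, reassuringly, are irrelevant to the final injectivity question).

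It then remains to decide when the quadratic function $q(\gamma)=\lambda(\gamma,\gamma)$ attached to the nondegenerate linking form is injective, and here the group structure is decisive. If $G$ contains an element $s$ of order at least $3$, then $s\neq -s$ while $q(s)=\lambda(s,s)=\lambda(-s,-s)=q(-s)$, so $q$ is not injective; thus injectivity forces $G$ to be an elementary abelian $2$-group $(\Z_2)^k$. On such a group $2s=0$ gives $2\lambda(s,s)=\lambda(2s,s)=0$, so $q$ takes values only in $\{0,\tfrac12\}\subset\Q/\Z$; an injective $q$ then forces $|G|\le 2$, i.e.\ $G\cong 0$ or $G\cong\Z_2$. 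For the converse, $G=0$ is immediate, and for $G=\Z_2$ nondegeneracy of $\lambda$ forces $\lambda(1,1)=\tfrac12\neq 0=\lambda(0,0)$, so $q$, and hence $\phi$, is injective. Translating back through the realisation statement of the first paragraph then yields the stated equivalence, and since $e(\xi)=2\Gamma$, determining $\Gamma$ also determines the Euler class.
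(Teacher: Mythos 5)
Your proposal is correct, and it takes a genuinely different route from the paper. The paper proves the two directions separately: for $H_1(M;\Z)\cong\Z_2$ it combines the Lutz-twist formulas of Proposition~\ref{prop:lutz_twist_rationally_null_homologous} with a restatement of Baker--Etnyre (Lemma~\ref{lem:bakr_etnyre}) and the oddness of integral self-linking numbers (Lemma~\ref{lem:not_dividing}, Theorem~\ref{thm:d_3determinesinvariants_precise}); for all other groups it first reduces, via Lemma~\ref{lem:homologydependenceonly}, to connected sums $\#_{i} L(p_i,1)$ and then builds the required pair of plane fields by explicit surgery-diagram computations with a case analysis on the $p_j$ (Theorem~\ref{thm:prototypeforgammaandd3}). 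You instead let the linking form $\lambda$ carry the whole argument: $d_3\bmod\Z$, as a function of $\Gamma$, is an additive constant plus (up to sign) the diagonal $\gamma\mapsto\lambda(\gamma,\gamma)$, and elementary group theory shows this diagonal is injective exactly when $|H_1|\leq 2$. Your pivotal identity $sl_{\mathbb{Q}}(T)\equiv\lambda(t,t)-\lambda(\PD e(\xi),t)\pmod{\Z}$, which you rightly single out as the main outstanding step, is true, and it can be established without any rational-Seifert-surface bookkeeping using tools the paper already deploys: in a contact $(\pm1)$-surgery presentation the Durst--Kegel formulas quoted in the proof of Lemma~\ref{lem:d_3invariant} give $tb_{\mathbb{Q}}\equiv-\alpha^\top Q^{-1}\alpha$ and $rot_{\mathbb{Q}}\equiv-\mathbf{r}^\top Q^{-1}\alpha \pmod{\Z}$, while $\lambda(\mu_i,\mu_j)=-(Q^{-1})_{ij}$ and $\PD e(\xi)=\sum_i r_i\mu_i$ (all signs being convention-dependent and, as you note, harmless for injectivity). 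What your route buys is uniformity --- no model manifolds, no case split --- and a conceptual reason for the cutoff: $\Z_2$ is the only nontrivial finite abelian group whose nondegenerate linking form has injective diagonal. What the paper's route buys in exchange is that every step is checked in completely explicit models.

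One further payoff of your formula is worth recording: the diagonal $\lambda(t,t)$ can vanish on classes of even order (isotropic classes), e.g.\ $t=\mu_1+\mu_2$ in $\R P^3\#\R P^3$ or $t=2\mu$ in $L(4,1)$, and since $\PD e(\xi)=2\Gamma$ is always even, every transverse knot in such a class has \emph{integral} $sl_{\mathbb{Q}}$ --- as one can also verify directly with the formula used in the proof of Theorem~\ref{thm:prototypeforgammaandd3}. Consequently Lemma~\ref{lem:not_dividing} (and the restatement of Baker--Etnyre in Lemma~\ref{lem:bakr_etnyre}) fail in the stated generality; they are correct precisely in the situation where the paper invokes them, namely $H_1\cong\Z_2$, where nondegeneracy forbids isotropic classes. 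So your approach is not only different but somewhat more robust than the paper's on this point.
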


\subsection*{Conventions}
Throughout this paper, all $3$-manifolds are assumed to be connected, clos\-ed, and oriented and all contact structures are assumed to be positive and coorientable. Since a contact surgery diagram determines a contact manifold only up to contactomorphism, this will be the notion we use to distinguish them, not isotopy. A contact manifold defined via a contact surgery on a Legendrian knot is up to contactomorphism independent of the orientation of the Legendrian knot. Nevertheless, we will often endow Legendrian links with orientations to simplify computations, so that, for example, the rotation numbers are well-defined. Legendrian knots in $(S^3,\xi_{std})$ are always presented in their front projection. We will write $t$ and $r$ for the Thurston--Bennequin invariant and the rotation number of an oriented Legendrian knot in $(S^3, \xi_{std})$. If the knot is in a general manifold $(M,\xi)$, we will use the notation $tb$ and $rot$.  We also normalize the $d_3$-invariant such that $d_3(S^3,\xi_{std})=0$, following conventions used for example in \cite{KegelEtnyreOnaran,Kegel_Onaran,Casals_Etnyre_Kegel,ChatterjeeKegel,KegelYadav}. Note that this differs from the original sources~\cite{Gompf_Stein,Ding_Geiges_Stipsciz_surgery_diagrams}.

\subsection*{Acknowledgments} 
The first author thanks Misha Schmalian for teaching him the basics about quadratic reciprocity, which we have used in Section~\ref{sec:lens_spaces}.
The second author thanks the University of Glasgow for providing travel support and the Humboldt University of Berlin for the warm hospitality. 

\subsection*{Grant support}
MK is supported by the DFG, German Research Foundation, (Project: 561898308); by a Ram\'on y Cajal grant (RYC2023-043251-I) and PID2024-157173NB-I00 funded by MCIN/AEI/10.13039/501100011033, by ESF+, and by FEDER, EU; and by a VII Plan Propio de Investigación y Transferencia (SOL2025-36103) of the University of Sevilla. This project started when IN and MY visited MK at the Humboldt University Berlin. The research visit of MY to Humboldt University of Berlin was supported by the BMS Math+ Hanna Neumann Fellowship. MY gratefully acknowledges the support of BMS Math+ and the warm hospitality of Humboldt University. IN is supported by EPSRC scholarship.

\section{Contact Dehn surgery}

In this section, we provide the necessary background on contact Dehn surgery. For more extended discussion we refer for example to \cite{Gompf_Stein,DingGeiges,Ding_Geiges_Stipsciz_surgery_diagrams,Ozbagci_Stipsicz_book,Geiges,Kegel-Durst,Kegel_thesis,Kegel_knot_complement,KegelEtnyreOnaran,Casals_Etnyre_Kegel}.
Let $K$ be a Legendrian knot in a contact $3$-manifold $(M,\xi)$. A \emph{contact Dehn surgery} along $K$ is removing a standard tubular neighborhood of $K$ and attaching a solid torus $\mathbb{S}^1\times D^2$ via a diffeomorphism that maps $\{pt\} \times \partial D^2$ to $p\mu+q\lambda_c$. Here $\mu$ is the meridian of $K$ and the \emph{contact longitude} $\lambda_c$ is a Legendrian knot obtained by pushing $K$ in the Reeb direction. The resulting manifold carries natural contact structures that coincide with $\xi$ outside the removed tubular neighborhood and are tight on the glued-in solid torus. It was shown by Honda \cite{Honda} and Giroux~\cite{Giroux_lens} that such structures always exist and that it is unique if $p=\pm 1$. For general rational contact surgery coefficients $\frac{p}{q}$ the resulting structure is not unique and there are finitely many different choices depending on the \emph{continued fraction expansion} of $\frac{p}{q}$ (see \cite{DingGeiges}). The following result was already mentioned in the introduction.

\begin{theorem}[Ding--Geiges \cite{DingGeiges}]\label{thm:lik-wallace-contact}
    Let $(M,\xi)$ be a contact 3-manifold. Then $(M,\xi)$ can be obtained by rational contact Dehn surgery along a Legendrian link in $(S^3,\xi_{std})$. Moreover, one can assume all contact surgery coefficients to be of the form $(\pm1)$.\qed
\end{theorem}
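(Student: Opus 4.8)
The plan is to realize $(M,\xi)$ through a compatible open book decomposition and to translate the factorization of its monodromy into Dehn twists into a sequence of contact $(\pm1)$-surgeries. First I would invoke Giroux's theorem that every closed contact $3$-manifold $(M,\xi)$ is supported by an open book $(\Sigma,\phi)$, where $\Sigma$ is a compact surface with non-empty boundary and $\phi\in\mathrm{MCG}(\Sigma,\partial\Sigma)$ is the monodromy. Since Dehn twists generate the mapping class group, I would factor $\phi=D_{c_1}^{\epsilon_1}\circ\cdots\circ D_{c_n}^{\epsilon_n}$ with $\epsilon_i\in\{+1,-1\}$ and each $c_i$ a simple closed curve on $\Sigma$. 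The idea is that each elementary factor will be implemented by a single contact surgery, so that the whole monodromy is built up surgery by surgery.

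The heart of the argument is the correspondence between modifying the monodromy by a Dehn twist and performing a contact surgery. Concretely, if $(M_0,\xi_0)$ is supported by $(\Sigma,\psi)$ and $c\subset\Sigma$ is a simple closed curve, then $(\Sigma,\psi\circ D_c^{-1})$ [resp.\ $(\Sigma,\psi\circ D_c)$] supports the contact manifold obtained from $(M_0,\xi_0)$ by contact $(+1)$-surgery [resp.\ contact $(-1)$-surgery] along a Legendrian knot $L_c$ sitting on a page as a Legendrian realization of $c$, for which the contact framing coincides with the page framing. I would establish (or cite, e.g.\ \cite{Ozbagci_Stipsicz_book,Geiges}) this by placing $c$ on a convex page, applying the Legendrian realization principle, and checking that a $\mp1$-surgery relative to the page framing is precisely the local model for composing the monodromy with $D_c^{\pm1}$. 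The base case is the open book $(\Sigma,\mathrm{id})$ with trivial monodromy: it supports the unique tight contact structure on $\#^{k}(S^1\times S^2)$ with $k=2g(\Sigma)+b(\Sigma)-1$, which is itself obtained by contact $(+1)$-surgery on a $k$-component Legendrian unlink of standard unknots (each with $tb=-1$, so that the smooth coefficient is $0$).

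Assembling these ingredients, I would build $(M,\xi)$ inductively: starting from $(\Sigma,\mathrm{id})$ and composing one Dehn twist $D_{c_i}^{\epsilon_i}$ at a time, each step is a single contact $(\pm1)$-surgery along $L_{c_i}$. Since the base manifold is itself contact $(+1)$-surgery on a Legendrian link in $(S^3,\xi_{std})$, the whole construction exhibits $(M,\xi)$ as contact $(\pm1)$-surgery along a Legendrian link in $(S^3,\xi_{std})$, which is exactly the ``moreover'' claim; the first (rational) assertion then follows a fortiori, since $(\pm1)$ are in particular rational coefficients. The main obstacle is the surgery-theoretic interpretation of Dehn twists in the second paragraph: one must verify with care both the sign dictionary (positive Dehn twist $\leftrightarrow$ Legendrian, i.e.\ $(-1)$-surgery) and the framing claim that the contact framing of the Legendrian page curve equals its page framing. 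An alternative route would start from a topological surgery link via Lickorish--Wallace, Legendrian realize it, and match smooth framings with contact coefficients $r_i-tb_i$; however, the difficulty there is genuinely harder, namely forcing the resulting contact structure to be the prescribed $\xi$ rather than merely some contact structure on the smooth manifold $M$, so I would favour the open book approach.
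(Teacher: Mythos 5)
Your proposal is correct in outline, but it takes a genuinely different route from the source: the paper itself gives no proof of this theorem, citing Ding--Geiges \cite{DingGeiges}, and their original argument does not use open books at all. It proceeds surgery-theoretically: present the smooth manifold via Lickorish--Wallace, realize the link by Legendrian representatives, convert rational coefficients to $(\pm1)$'s via continued fraction expansions, and then correct the resulting contact structure using Lutz twists (each realizable by two contact $(+1)$-surgeries), Eliashberg's classification of overtwisted structures \cite{Eliashberg_OT}, and the Cancellation Lemma (Lemma~\ref{lem:cancelation}) to reverse a chain of surgeries from $(M,\xi)$ down to $(S^3,\xi_{std})$ --- this is exactly how they sidestep the difficulty you correctly identify in your ``alternative route,'' namely hitting the prescribed $\xi$ rather than some contact structure on $M$. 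Your open-book argument, by contrast, leans on the Giroux correspondence plus the standard dictionary that composing the monodromy with $D_c^{\pm1}$ is contact $(\mp1)$-surgery on a page curve whose contact framing equals the page framing (see \cite{Ozbagci_Stipsicz_book,Geiges}); it is conceptually cleaner and produces explicit page-framed diagrams, at the cost of invoking heavier machinery. Beyond the checks you already flagged, two points need care: (i) Legendrian realization on a (convex copy of a) page requires the curve to be non-isolating, so you should first rewrite the factorization of $\phi$ using twists along non-separating and boundary-parallel curves, which is always possible; (ii) your induction produces a \emph{sequence} of surgeries in successively changing manifolds, whereas the theorem asserts a single Legendrian link in $(S^3,\xi_{std})$ --- to close this gap, either place the curves $c_1,\dots,c_n$ on distinct parallel pages of $(\Sigma,\mathrm{id})$ simultaneously, obtaining one Legendrian link in the tight $\#^k(S^1\times S^2)$, and then push that link into the complement of the standard $(+1)$-unlink diagram via the complement-representation lemma (Lemma~\ref{lem:knot_in_complement}, \cite{Kegel_knot_complement}), or argue the concatenation step directly. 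With those repairs your proof is complete and is a well-known alternative to the cited one.
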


The following lemma shows that a contact $(+1)$-surgery is inverse to a contact $(-1)$-surgery.

\begin{lemma}[Cancellation Lemma \cite{DingGeiges}]\label{lem:cancelation}
 Let $K$ be a Legendrian knot in $(M,\xi)$ and $(M',\xi')$ be the result of contact $(\pm1)$-surgery along $K$. Then the core of the newly glued-in solid torus is again a Legendrian knot, the dual knot $K^*$, and contact $(\mp1)$-surgery along $K^*$ produces again $(M,\xi)$.\qed
\end{lemma}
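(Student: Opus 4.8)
The plan is to reduce the statement to an analysis inside a single solid torus and then invoke the uniqueness of tight contact structures on solid tori (Honda~\cite{Honda}, Giroux~\cite{Giroux_lens}) to upgrade a smooth cancellation to a contactomorphism. Write $N=\nu K$ for a standard contact neighborhood of the Legendrian knot $K$, and fix the basis $(\mu,\lambda_c)$ of $H_1(\partial N;\Z)$, where $\mu$ is the meridian of $K$ and $\lambda_c$ the contact longitude, oriented so that $\langle\mu,\lambda_c\rangle=1$. By definition, contact $(\pm1)$-surgery removes $N$ and glues back a solid torus $V$ along a diffeomorphism $\phi\colon\partial V\to\partial N$ with $\phi(\mu_V)=\pm\mu+\lambda_c$, where $\mu_V=\{pt\}\times\partial D^2$; the contact structure on $V$ is the unique tight one extending $\xi|_{\partial N}$, as guaranteed by Theorem~\ref{thm:lik-wallace-contact} and the uniqueness in the $p=\pm1$ case.

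First I would identify the dual knot. The dividing set that $\xi$ induces on the convex torus $\partial N$ consists of two curves parallel to the contact longitude $\lambda_c$, since $N$ is a standard neighborhood of the Legendrian knot $K$. Because $\xi|_V$ is the unique tight structure with this boundary data, it is, up to contact isotopy, itself a standard neighborhood of its Legendrian core; in particular the core $K^*=\mathbb{S}^1\times\{0\}$ of $V$ is Legendrian. Its meridian is $\mu^*=\mu_V$, equal to $\pm\mu+\lambda_c$ in the basis $(\mu,\lambda_c)$, and its contact longitude $\lambda_c^*$ is the longitude of $K^*$ parallel to the dividing curves, i.e.\ the curve of slope $0$; oriented by the orientation of $V$, one finds $\lambda_c^*=\pm\lambda_c$, with the sign matching that of the surgery.

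The cancellation is now a computation in $H_1(\partial N;\Z)$. Contact $(\mp1)$-surgery along $K^*$ glues in a solid torus whose meridian maps to $\mp\mu^*+\lambda_c^*$. Substituting $\mu^*=\pm\mu+\lambda_c$ and $\lambda_c^*=\pm\lambda_c$ gives $\mp\mu^*+\lambda_c^*=-\mu$ in both sign cases, i.e.\ the original meridian of $K$ up to orientation. Hence the second surgery fills the knot complement $M\setminus N$ with a solid torus whose meridian disk is bounded by $\mu$, which recovers the smooth manifold $M$. To promote this to a contactomorphism I would observe that the re-glued solid torus again carries a tight contact structure whose boundary dividing set is two curves parallel to $\lambda_c$; by the uniqueness of the tight filling it agrees with $\xi|_N$, so the result is $(M,\xi)$ as contact manifolds.

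The step I expect to be the main obstacle is the orientation bookkeeping in the middle paragraph. The dividing set determines $\lambda_c^*$ only as an unoriented slope, whereas the contact surgery coefficient $\mp1$ depends on the relative sign between $\mu^*$ and $\lambda_c^*$. Pinning this sign down requires tracking the orientation that $V$ induces on $\partial V$---equivalently, choosing the longitude $\lambda_V$ that completes $\mu_V$ to a positively oriented basis---and it is this sign that reconciles the two cases $(+1)\mapsto(-1)$ and $(-1)\mapsto(+1)$. I would therefore carry out the two cases separately to confirm that the oriented identity $\mp\mu^*+\lambda_c^*=-\mu$ holds in each.
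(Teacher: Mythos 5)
The paper does not prove this lemma at all: it is quoted from Ding--Geiges with a \(\qed\), so there is no internal proof to compare against. Your argument is essentially the standard proof from the cited literature (it is close in spirit to Ding--Geiges' original argument and to the dual-knot treatment in Kegel's thesis), and it is correct in outline: identify the glued-in tight solid torus as a standard neighbourhood of its Legendrian core via the Honda--Giroux uniqueness, then check homologically that the \((\mp1)\)-curve of the dual is the old meridian, then use uniqueness again to recover \(\xi|_N\). Two points deserve to be made explicit. First, to invoke ``unique tight structure \(\Rightarrow\) standard neighbourhood'' you need the dividing slope to be \emph{integral} in the coordinates of \(V\); this follows from the computation \(\lambda_c\cdot\phi(\mu_V)=\lambda_c\cdot(\pm\mu+\lambda_c)=\mp1\), which shows each dividing curve meets the meridian disk of \(V\) exactly once, i.e.\ is a longitude of \(K^*\) --- you assert the conclusion but should record this one-line check. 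Second, the sign you flag is genuinely forced, not a choice: reversing the orientation of \(K^*\) flips \(\mu^*\) and \(\lambda_c^*\) simultaneously, so only the relative sign matters, and it is pinned down by the fact that the gluing map reverses the boundary orientation (equivalently by requiring \(\mu^*\cdot\lambda_c^*\) to have the correct sign on \(\partial V\) oriented as the boundary of \(V\)); carrying this through gives exactly your \(\lambda_c^*=\pm\lambda_c\) with the sign matching the surgery, whence \(\mp\mu^*+\lambda_c^*=-\mu\) in both cases, consistent with the known statement. Finally, a cosmetic point: the uniqueness of the tight extension for \(p=\pm1\) is the Honda--Giroux result discussed in the text of Section 2, not part of Theorem~\ref{thm:lik-wallace-contact}, and the uniqueness you use at the last step should be quoted in its ``up to isotopy rel boundary'' form (after normalising the characteristic foliation by Giroux flexibility) so that the contactomorphism with \((M,\xi)\) is the identity on \(M\setminus N\).
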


It is rather important for our work to understand when a given Legendrian or transverse knot in a contact $3$-manifold can be presented in a rational contact surgery diagram of that manifold. In general, this is not possible, see \cite{Kegel_knot_complement} for a concrete example. However, in certain situations this holds true.

\begin{lemma}[\cite{Kegel_knot_complement}]\label{lem:knot_in_complement}
 Let $\mathcal{L}= L_1\cup\cdots  \cup L_n \subset  (S^3
, \xi_{st})$ be a contact surgery diagram with
$(\pm1)$-contact surgery coefficients, $i = 1, \cdots , n$ of a contact manifold $(M, \xi)$. Then
any Legendrian or transverse knot $K$ in $(M,\xi)$ can be represented by a Legendrian knot in the
exterior of $L$.\qed
\end{lemma}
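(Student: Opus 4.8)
The plan is to translate the statement into the geometry of the surgered manifold and then run a general-position argument followed by a push-off across the surgery tori. Write $E = S^3 \setminus \nu(\mathcal{L})$ for the compact exterior of the surgery link, so that $M = E \cup_{\partial}(N_1 \sqcup \cdots \sqcup N_n)$, where each $N_i$ is the solid torus glued in along $\partial\nu(L_i)$ with the tight contact structure produced by the surgery. By the Cancellation Lemma~\ref{lem:cancelation}, the core $C_i = L_i^{*}$ of $N_i$ is a Legendrian knot (the dual of $L_i$), and since contact surgery only modifies a neighbourhood of $\mathcal{L}$, on the common region $E$ the contact structure on $M$ agrees with $\xi_{std}|_E$. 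The strategy is then to isotope $K$ off the cores $\bigcup_i C_i$, push it into $E$ by a contact isotopy, and---in the transverse case---pass to a Legendrian approximation inside $E$ at the very end.

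The first step is a general-position argument carried out in whichever category $K$ lives in. Since $\dim K + \dim C_i = 2 < 3$, a generic $1$-submanifold is disjoint from the cores, and any transverse intersection can be removed by a $C^{0}$-small isotopy supported in a Darboux chart near the intersection point (Legendrian if $K$ is Legendrian, transverse if $K$ is transverse). After this step $K \subset M \setminus \bigcup_i C_i$; as $K$ is compact and disjoint from the cores, it misses a smaller standard neighbourhood $N_i'' \subset \mathrm{int}(N_i)$ of each $C_i$. Hence $K$ lies in $E$ together with the collars $N_i \setminus \mathrm{int}(N_i'')$.

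The second step pushes $K$ out of these collars into $E$ by a contact isotopy. On the standard contact neighbourhood of a Legendrian core, modelled on a neighbourhood of the zero section in $J^{1}(S^{1})$ with contact form $dz - y\,dx$, the conformal scaling $(x,y,z)\mapsto(x,e^{t}y,e^{t}z)$ is a contactomorphism generated by the contact vector field $y\partial_y + z\partial_z$, which pushes every point off the core radially outward towards $\partial N_i = \partial E$. Cutting this field off away from $C_i$ and extending it slightly across $\partial E$, where $\xi = \xi_{std}$, yields a contact isotopy of $M$ that, applied for large enough time, carries the compact piece $K \cap N_i$ across $\partial E$ into $E$ while keeping $K$ Legendrian (respectively transverse) throughout. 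The resulting knot lies in $E = S^{3}\setminus\nu(\mathcal{L})$, where $\xi = \xi_{std}$; if $K$ was transverse, taking a Legendrian approximation inside a small neighbourhood contained in $E$ produces the desired Legendrian representative.

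I expect the delicate point to be the second step: one must verify that the expanding contact flow near the core can be globally extended and cut off so as to define a genuine contact isotopy of $M$ that moves $K \cap N_i$ \emph{entirely} across $\partial E$ in finite time, rather than merely accumulating it onto $\partial E$ from the inside. This is precisely why one works with the smaller core-neighbourhoods $N_i''$ that $K$ already avoids and enlarges the picture by a collar on the $E$-side: the flow then has a positive lower bound on the radial coordinate of $K \cap N_i$ to amplify, guaranteeing that the knot crosses the boundary after a uniformly bounded time.
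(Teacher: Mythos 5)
The paper itself offers no proof of this lemma --- it is quoted with a \verb|\qed| from~\cite{Kegel_knot_complement} --- and the proof given there has exactly your skeleton: perturb $K$ off the dual Legendrian link $\bigcup_i C_i$, use the fact that the glued-in solid tori are \emph{standard} neighbourhoods of their cores to move $K$ into the exterior $E$, and take a Legendrian approximation in the transverse case. The difference is the engine of the second step: the reference invokes uniqueness of standard neighbourhoods of a Legendrian knot together with contact isotopy extension --- both $N_i$ and the small core neighbourhood $N_i''$ avoided by $K$ are standard neighbourhoods of $C_i$, so there is an ambient contact isotopy $\psi$ of $M$ with $\psi(N_i)\subset N_i''$, and then $\psi^{-1}(K)\subset M\setminus N_i$ is the desired representative --- whereas you try to build the expansion by hand with the contact vector field $y\partial_y+z\partial_z$.

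That hand-built step is where a genuine gap sits, in two places. First, your model field lives only on a $J^{1}(S^{1})$-model neighbourhood of $C_i$, so you are implicitly asserting that the \emph{entire} glued solid torus $N_i$, with $\partial N_i$ a model torus, is such a standard neighbourhood. This is true precisely for $(\pm1)$-coefficients --- it is the substance of the proof of the Cancellation Lemma~\ref{lem:cancelation} in Ding--Geiges and the reason the lemma is restricted to $(\pm1)$-diagrams --- but Lemma~\ref{lem:cancelation} as stated only gives you a Legendrian core, so this must be argued or cited separately; note that your argument as written never uses anything distinguishing $(\pm1)$-surgeries from general rational ones, which is a warning sign. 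Second, the cut-off must happen on the $E$-side of $\partial N_i$: if you damp the field to zero inside $N_i$, the flow fixes a neighbourhood of $\partial N_i$ and $K\cap N_i$ merely accumulates there without ever leaving. The repair is to extend the contact Hamiltonian $H=\alpha(X)$ smoothly across $\partial N_i$ into a collar in $E$ and cut off there; then outward transversality of $X$ along $\partial N_i$ in the model prevents any trajectory from re-entering $N_i$, and compactness of $K\cap N_i$ away from $N_i''$ gives a uniform crossing time. Beware also that for a cut-off contact Hamiltonian one has $X_{\chi H}\neq \chi X_H$ (there are $d\chi$-terms), so the flow in the collar is not the damped radial field you describe. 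All of this is fixable, but the cleaner move is simply to replace your flow construction by the uniqueness-of-standard-neighbourhoods argument above.
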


\section{The homotopical invariants of a contact structure}\label{sec:invariants}

In this section, we present the necessary background on the homotopical invariants of the underlying tangential $2$-plane field of a contact structure. It is known that a tangential $2$-plane field $\xi$ on a rational homology sphere $M$ is completely determined (up to homotopy) by the $d_3$-invariant and Gompf's $\Gamma$-invariant~\cite{Gompf_Stein,DingGeigesStipsciz}. 
Roughly speaking, the $\Gamma$-invariant is a refinement of the Euler class (since its double is the Euler class) and encodes $\xi$ on the $2$-skeleton of $M$, while the $d_3$-invariant specifies $\xi$ on the $3$-cell. In reality the $\Gamma$-invariant only becomes relevant whenever there is 2-torsion, if not, one could just use the Euler class, which has the advantage of not needing to introduce \emph{spin structures}. In this work, we will need to use the $\Gamma$-invariant since we usually have cases where $2$-torsion can appear.

We briefly recall the following useful lemma to compute the $d_3$-invariant from a contact $(\pm1)$-surgery diagram~\cite{Ding_Geiges_Stipsciz_surgery_diagrams,Kegel-Durst}.

 \begin{lemma}\label{lem:d3}
     Let $L=L_1\cup\cdots\cup L_k$ be an oriented Legendrian link in $(S^3,\xi_{std})$ and let $(M,\xi)$ be the contact manifold obtained by contact $\left(\pm1\right)$-surgeries along $L$. Let $t_i$ and $r_i$ be the Thurston--Bennequin invariant and rotation number of $L_i$ for all $i=1,\ldots,k$. Let $l_{ij}$ be the linking number of $L_i$ with $L_j$ and let ${p_i}=t_i\pm 1$ be the topological surgery coefficient of $L_i$. We define the linking matrix $Q$ by
\begin{align*}
  Q=\begin{pmatrix}
     p_1&l_{12}&\cdots& l_{1k}\\
     l_{12}&p_2&\cdots &l_{2k}\\
     \vdots&&\ddots&\vdots\\
     l_{1k}&&&p_k
  \end{pmatrix} . 
\end{align*}
\begin{enumerate}
    \item The first homology $H_1(M;\Z)$ is presented  by the abelian group $\langle\mu_i| Q\mu^T=0\rangle$, where $\mu=(\mu_1,\ldots,\mu_k)$ is the vector of meridians $\mu_i$ of $L_i$.
    \item If there exists a rational solution $b \in \Q^k$ of $Qb = r$, where $r=(r_1,\ldots,r_k)^T$, the $d_3$-invariant is well defined and is computed as
    $$\frac{1}{4}\{\langle r,b\rangle-3\sigma(Q)-2\operatorname{rk}(Q)-2\}-\frac{1}{2}+q$$
    where $\langle \cdot,\cdot\rangle$ simply denotes the vector product, $\sigma(Q)$ and $\operatorname{rk}$ denote the signature and rank of $Q$, and $q$ is the number of positive contact surgeries in the diagram. \qed
\end{enumerate}   
 \end{lemma}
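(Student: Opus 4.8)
The plan is to exhibit $(M,\xi)$ as the contact boundary of an explicit almost-complex $4$-manifold read off from the surgery diagram, so that part~(1) becomes the homological bookkeeping of that filling and part~(2) becomes an evaluation of Gompf's homotopy-invariant formula on it. Throughout, let $X$ denote the compact $4$-manifold obtained from $B^4$ by attaching a $2$-handle $h_i$ along each $L_i\subset S^3=\partial B^4$ with framing equal to the topological coefficient $p_i=t_i\pm1$; by construction $\partial X=M$.

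For part~(1), I would compute $H_1(M;\Z)$ from the cellular chain complex of $X$ (equivalently, from the long exact sequence of the pair $(X,M)$, or by Mayer--Vietoris on the decomposition $M=(S^3\setminus\nu L)\cup(\bigsqcup_i V_i)$ into the link exterior and the surgery solid tori $V_i$). The group $H_1(S^3\setminus\nu L)$ is free on the meridians $\mu_1,\dots,\mu_k$, and gluing in $V_i$ introduces exactly the relation that the attaching curve, namely $p_i\mu_i+\sum_{j\neq i}l_{ij}\mu_j$, bounds (using that the $0$-framed longitude of $L_i$ is homologous to $\sum_{j\neq i}l_{ij}\mu_j$ in the exterior). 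Assembling these relations is precisely multiplication by $Q$, so $H_1(M;\Z)\cong\operatorname{coker}Q=\langle\mu_1,\dots,\mu_k\mid Q\mu^{T}=0\rangle$. The only point requiring care is that the meridians generate and that no further relations appear, which is standard Kirby calculus.

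For part~(2), the backbone is Gompf's formula: if $(M,\xi)$ bounds an almost-complex $(X,J)$ with $\xi=TM\cap J(TM)$ the field of complex tangencies and with $c_1(X,J)$ of finite order, then $d_3(\xi)=\tfrac14\bigl(c_1(X,J)^2-3\sigma(X)-2\chi(X)\bigr)$ up to an overall additive constant fixed by a normalization~\cite{Gompf_Stein}. First assume all surgeries are negative, so $q=0$. Then the Eliashberg--Weinstein--Gompf handle calculus endows $X$ with such a $J$ (each $2$-handle is attached along a Legendrian knot with framing $t_i-1$), and one reads off the three ingredients from the handle data: $Q$ is the intersection form on $H_2(X)\cong\Z^{k}$, so $\sigma(X)=\sigma(Q)$; one has $\chi(X)=1+k$, which in the non-degenerate case equals $1+\operatorname{rk}(Q)$; and the relative first Chern class satisfies $\langle c_1(X,J),[\widehat\Sigma_i]\rangle=r_i$ on the capped-off Seifert surfaces $\widehat\Sigma_i$, whence $c_1(X,J)^2=\langle r,b\rangle$ for any solution $b$ of $Qb=r$. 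The existence of such a rational $b$ is equivalent to $r\in\operatorname{im}(Q\otimes\Q)$, i.e.\ to $c_1(X,J)|_M$ being torsion, which is exactly the hypothesis that makes $c_1^2$ a well-defined rational number and $d_3$ a genuine homotopy invariant of $\xi$; moreover $\langle r,b\rangle$ is independent of the choice of $b$, since two choices differ by an element of $\ker Q$ and $\langle r,b-b'\rangle=b^{T}Q(b-b')=0$. Substituting these values and pinning the additive constant by the normalization $d_3(S^3,\xi_{std})=0$ yields the stated formula with $q=0$.

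The main obstacle is the $q$ positive surgeries: a contact $(+1)$-handle is not Weinstein, so the almost-complex structure does not extend over it in the Stein manner and Gompf's formula cannot be applied to $X$ verbatim. Following Ding--Geiges--Stipsicz~\cite{Ding_Geiges_Stipsciz_surgery_diagrams}, I would isolate the resulting correction by a local analysis at each positive handle --- for instance by using the Cancellation Lemma~\ref{lem:cancelation} to relate the $(+1)$-handle to a cancelling $(-1)$-handle and comparing the two almost-complex structures on the overlap --- and show that each positive surgery shifts $\tfrac14(c_1^2-3\sigma-2\chi)$ by $+1$, producing the aggregate term $+q$. The remaining subtlety is the degenerate case $b_1(M)=k-\operatorname{rk}(Q)>0$, where $Q$ is not invertible: here the naive expression is no longer a homotopy invariant, and one must invoke the refinement of Kegel--Durst~\cite{Kegel-Durst}, in which the Euler-characteristic contribution $1+k$ is replaced by $1+\operatorname{rk}(Q)$ while $c_1^2$ is interpreted through any rational solution $b$; this is precisely how the factor $2\operatorname{rk}(Q)+2$ enters. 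Finally, independence of the whole right-hand side from the chosen filling --- hence well-definedness --- follows by gluing two fillings along $M$ into a closed almost-complex manifold and invoking Novikov additivity of the signature together with the additivity of $\chi$ and of $c_1^2$ in the torsion case.
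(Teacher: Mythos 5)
The paper gives no proof of this lemma at all---it is recalled from the literature with a \qed, citing Ding--Geiges--Stipsicz and Durst--Kegel---and your proposal is a faithful reconstruction of exactly those sources' argument: the handle presentation of $H_1(M)\cong\operatorname{coker}Q$ for part (1), Gompf's invariant $\tfrac14\bigl(c_1^2-3\sigma-2\chi\bigr)$ evaluated on the Stein filling with $c_1^2=\langle r,b\rangle$ for the all-negative case, the Ding--Geiges--Stipsicz correction of $+1$ per contact $(+1)$-surgery, and the Durst--Kegel replacement of $\chi$ by $\operatorname{rk}(Q)+1$ when $Q$ is degenerate. So your route coincides with the paper's (i.e.\ with its cited references), and the outline is correct, including the key well-definedness check that $\langle r,b\rangle$ is independent of the choice of rational solution $b$ of $Qb=r$.
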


Next, we discuss the $\Gamma$-invariant of a contact manifold $(M, \xi)$. Let $\mathfrak{s}$ be a spin structure on $M$, then Gompf~\cite{Gompf_Stein} defines an invariant $\Gamma(\xi,\mathfrak{s})\in H_1(M)$, that depends only on the tangential $2$-plane field $\xi$ and the spin structure $\mathfrak{s}$. To explain how to compute $\Gamma$-invariant from a contact $(\pm1)$-surgery diagram, we first recall how spin structures are represented in surgery diagrams~\cite{Gompf_Stipsicz}.

Let $\mathcal{L}= L_1 \cup \dots \cup L_k$ be an integer surgery diagram of a smooth $3$-manifold $M$ along an oriented link $\mathcal{L}$, where $l_{ii}$ denotes the the framing of $L_i$ measured relative to the Seifert framing. A sublink $\mathcal{L}_J=(L_j)_{j \in J}$, for some subset $J \subseteq \{1, 2, \dots, k\}$, is called a \textit{characteristic sublink} if  
\[
l_{ii} \equiv \sum_{j \in J} l_{ij} \pmod{2}
\]  
for every component $L_i$ of $\mathcal{L}$. The characteristic sublinks of $\mathcal{L}$ are in natural bijection with the spin structures on $M$~\cite{Gompf_Stipsicz}. Thus, each spin structure on $M$ can be represented by a characteristic sublink of $\mathcal{L}$.
The following lemma from~\cite{KegelEtnyreOnaran} explains how to compute the $\Gamma$-invariant from a contact $(\pm1)$-surgery diagram. 

\begin{lemma}  \label{lem:Gamma}
Let \( \mathcal{L} = L_1 \cup \dots \cup L_k \) be a Legendrian link in \( (S^3, \xi_{std}) \), and let \( (M, \xi) \) be the contact manifold obtained by performing contact $(\pm 1)$-surgery along \( \mathcal{L} \). 
Let $\mathcal{L}_J$ be a characteristic sublink describing a spin structure \( \mathfrak{s} \) on \( M \). Then the \( \Gamma \)-invariant satisfies  
\[
\pushQED{\qed} 
\Gamma(\xi, \mathfrak{s}) = \frac{1}{2} \left( \sum_{i=1}^k r_i \mu_i + \sum_{j \in J} (Q\mu)_j \right) \in H_1(M).\qedhere
\popQED
\]  
\end{lemma}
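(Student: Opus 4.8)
\textbf{Proof strategy for Lemma~\ref{lem:Gamma}.}
The plan is to compute $\Gamma(\xi,\mathfrak{s})$ directly from the handlebody decomposition of $M$ induced by the surgery diagram, working relative to the obstruction-theoretic definition of Gompf's invariant. Recall that $\Gamma(\xi,\mathfrak{s})$ measures the difference, as a class in $H_1(M)$, between the $2$-plane field $\xi$ and a reference field determined by the spin structure $\mathfrak{s}$, both restricted to the $2$-skeleton of $M$. The standard contact $(\pm 1)$-surgery presentation realizes $M$ as the boundary of a $4$-manifold $X$ built from $B^4$ by attaching $2$-handles along the components $L_i$ with framings $p_i = t_i \pm 1$; the Legendrian condition endows $X$ with an almost-complex structure $J$ whose first Chern class is represented by $\mathrm{PD}$ of $\sum_i r_i \, [\Sigma_i]$, where $\Sigma_i$ is the capped-off core of the $i$-th handle. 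This is the geometric input that ties the rotation numbers $r_i$ to the characteristic class computation, exactly as in the $d_3$-computation of Lemma~\ref{lem:d3}.

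First I would recall the definition of $\Gamma$ via a trivialization of $\xi$ over the $1$-skeleton and the resulting obstruction over the $2$-cells, which are precisely the cocores of the surgery handles, dually indexed by the meridians $\mu_i$. The key observation is that the obstruction to extending a chosen section over the $2$-cell attached along $L_i$ is computed by the rotation number $r_i$ (measuring the winding of $\xi$ relative to the surgery framing), contributing the term $\tfrac{1}{2}\sum_i r_i\mu_i$. Second, I would account for the dependence on the spin structure $\mathfrak{s}$: changing the reference field to the one pinned down by $\mathfrak{s}$ shifts the obstruction by a correction term supported on the characteristic sublink $\mathcal{L}_J$. The natural bijection between spin structures and characteristic sublinks (recalled above from~\cite{Gompf_Stipsicz}) identifies this correction with $\tfrac{1}{2}\sum_{j\in J}(Q\mu)_j$, since the $j$-th row of $Q$ records how the framing and linking data of $L_j$ interact with the ambient meridians. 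The factor $\tfrac{1}{2}$ reflects that $\Gamma$ is a square-root refinement of the Euler class, whose double $e(\xi) = \mathrm{PD}(\sum_i r_i\mu_i)$ is recovered by doubling and discarding the $\mathfrak{s}$-dependent half-integral correction.

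The main technical step, and the one I expect to be the principal obstacle, is verifying that the two contributions combine to give a \emph{well-defined} class in $H_1(M)$, i.e.\ that the half-integral vector $\tfrac{1}{2}(\sum_i r_i\mu_i + \sum_{j\in J}(Q\mu)_j)$ descends to an honest element of the quotient $\langle \mu_i \mid Q\mu^T = 0\rangle$ despite the factor of $\tfrac{1}{2}$. This is where the characteristic condition $l_{ii}\equiv \sum_{j\in J}l_{ij}\pmod 2$ is essential: it guarantees that for every $i$, the $i$-th entry of $\sum_i r_i\mu_i + \sum_{j\in J}(Q\mu)_j$ has the correct parity relative to the framing, so that halving and reducing modulo the image of $Q$ yields a consistent answer independent of the lift. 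I would check this parity bookkeeping by pairing against the relations $Q\mu^T=0$ and invoking the congruence $r_i \equiv t_i \equiv p_i - (\pm 1) \pmod 2$ relating rotation numbers to framings.

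Finally, I would confirm that the formula is independent of the choices made (the trivialization over the $1$-skeleton and the representative of $\mathfrak{s}$ among its characteristic sublinks) by noting that any two characteristic sublinks for the same spin structure differ by an element in the image of $Q$, and that such a difference is absorbed into the quotient $H_1(M) = \mathrm{coker}(Q)$. Since the formula, the geometric setup, and the normalization all match those established in~\cite{KegelEtnyreOnaran}, the verification reduces to the obstruction-theoretic computation together with the parity check, completing the proof.
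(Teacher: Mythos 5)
First, a calibration point: the paper does not actually prove Lemma~\ref{lem:Gamma} --- it is quoted with a \qed from \cite{KegelEtnyreOnaran}, where the proof does proceed along the general lines you describe (realize $M$ as the boundary of the $2$-handlebody $X$, take the almost-complex structure with $c_1$ dual to $\sum_i r_i[\Sigma_i]$, and feed the characteristic sublink into Gompf's definition of $\Gamma$). So your overall strategy is the right one. However, your sketch breaks at exactly the step you single out as the principal obstacle. You invoke the congruence $r_i \equiv t_i \pmod{2}$, and this is false: $t_i - r_i$ is the self-linking number of the positive transverse push-off of $L_i$, which is always \emph{odd} for knots in $(S^3,\xi_{std})$ (the paper itself uses this oddness in the proof of Lemma~\ref{lem:not_dividing}). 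The correct chain is $r_i \equiv t_i + 1 \equiv p_i = l_{ii} \pmod{2}$, since $p_i = t_i \pm 1$. With the correct congruence, the $\mu_i$-coefficient of $\sum_i r_i\mu_i + \sum_{j\in J}(Q\mu)_j$, namely $r_i + \sum_{j\in J} l_{ij}$, is even precisely because the characteristic condition gives $\sum_{j\in J} l_{ij} \equiv l_{ii} \pmod{2}$; with \emph{your} congruence the same computation yields an odd coefficient and the halving you need fails. This is a genuine error, though one repaired by a single sign.

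The second, more structural gap is that your identification of the spin-structure correction with $\frac{1}{2}\sum_{j\in J}(Q\mu)_j$ is asserted rather than derived. The bijection between spin structures and characteristic sublinks is purely set-theoretic and produces no formula; saying that ``the $j$-th row of $Q$ records how the framing and linking data of $L_j$ interact with the ambient meridians'' restates the conclusion of the lemma, it does not prove it. The actual engine, which your sketch never invokes, is Gompf's quantitative $4$-dimensional characterization of $\Gamma$ from \cite{Gompf_Stein}: one must express the reference field attached to $\mathfrak{s}$ via the class $\sum_{j\in J}[\Sigma_j]\in H_2(X)$ of the characteristic sublink and then compute the restriction to $\partial X$ of the resulting half-characteristic element, which is where the row sums $(Q\mu)_j$ enter. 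Without citing and using that result, your second paragraph is circular. A minor further slip: for a \emph{fixed} surgery diagram, each spin structure corresponds to a \emph{unique} characteristic sublink, so there is no ``choice of representative of $\mathfrak{s}$ among its characteristic sublinks'' to check; what varies is the diagram itself under Kirby moves, and that invariance is the content of Lemma~\ref{lem:sublink_modifications}, which the present lemma does not require.
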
  

To compare $\Gamma$-invariants of contact structures described by different contact surgery diagrams of the same underlying smooth $3$-manifold, we need to understand how the spin structures in these surgery diagrams are related. The following lemma, which can also be derived from ~\cite{Gompf_Stipsicz}, tells us how characteristic sublinks change under Kirby moves. 
 
\begin{lemma} \label{lem:sublink_modifications}
    Let $\mathcal{L} = L_1 \cup \dots \cup L_k$ be a smooth oriented integer surgery link with characteristic sublink $\mathcal{L}_J$ representing a spin structure $\mathfrak{s}$. Then the following modifications of the characteristic sublink under Kirby moves preserve the spin structure $\mathfrak{s}$.
    \begin{itemize}
      \item \textbf{Blow up/down:} Let $U$ be an unknot with surgery coefficient $\pm 1$ which is added to $\mathcal{L}$ under a blow up move. Then $U$ gets added to the characteristic sublink if and only if $$\sum_{j\in J}lk(U,L_j)=0\, \pmod{2}.$$ The other components of the characteristic sublink stay unchanged. Under the blow down move, we remove an unknot with $\pm1$ coefficients from $\mathcal{L}$ and the other components of the characteristic sublink stay the same.
      \item \textbf{Handle slide:}
      If we slide the component $L_i$ over the component $L_k$ then $L_k$ changes its membership status in the characteristic sublink if and only if $L_i$ is in the characteristic sublink. All other components of the characteristic sublink stay the same.
      \item \textbf{Rolfsen twist:} If $L_k$ is a $0$-framed unknot and we perform an $n$-fold Rolfsen twist on $L_k$, then the resulting surgery diagram is again an integer surgery diagram. All components of the characteristic sublink remain unchanged, except for a possible change of the unknot $L_k$ which is being twisted. $L_k$ changes the membership status to the characteristic sublink if and only if 
     $$n\left(1+\sum_{j\in J-\{k\}}lk(L_k,L_j)\right)=1\, \pmod{2}.$$
     \item \textbf{Slam dunk:} Let $\overline{\mathcal{L}}=\mathcal{L}\cup K \cup
     U$, where $U$ is a $0$-framed unknot and $K$ is an $n$-framed knot as shown in Figure \ref{fig:charsublink} (with $n \in \mathbb{Z}$).  Performing a slam dunk move  on this pair will remove $U$ and $K$ from the surgery diagram and we get the surgery diagram $\mathcal{L}$. The rest of the components in the surgery diagram that are part of the characteristic sublink remain part of the characteristic sublink.
     \item \textbf{Inverse Slam dunk:}
     We describe the modifications of the characteristic sublink under the inverse of the slam dunk move described above. In this situation $K$ never gets added to the characteristic sublink and $U$ gets added to the characteristic sublink if and only if the quantity $n-lk(K,\mathcal{L}_J)$ is an odd integer, where $\mathcal{L}_J$ is the characteristic sublink of $\mathcal{L}$. 
\end{itemize}     
\end{lemma}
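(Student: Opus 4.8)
The plan is to exploit that the bijection between spin structures on $M$ and characteristic sublinks of a surgery diagram is canonical, so that it commutes with the diffeomorphisms realizing Kirby moves. Concretely, let $X$ be the $4$-dimensional $2$-handlebody built from the framed link $\mathcal{L}$, so that $H_2(X;\Z)\cong\Z^k$ carries the intersection form $Q$ and $\partial X=M$. A sublink $\mathcal{L}_J$ is characteristic precisely when its indicator vector $w_J=\sum_{j\in J}e_j$ satisfies $Qw_J\equiv\mathrm{diag}(Q)\pmod 2$, i.e.\ $w_J$ is a characteristic covector of $Q$ modulo $2$. Since $H^1(M;\Z_2)\cong\ker(Q\bmod 2)$ by Poincaré duality, and the characteristic covectors form a torsor over this same kernel, the assignment $w_J\mapsto\mathfrak{s}_J$ of~\cite{Gompf_Stipsicz} is an isomorphism of $H^1(M;\Z_2)$-torsors. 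Thus for each move it suffices to (i) verify that the prescribed sublink is again characteristic for the new diagram, and (ii) verify that it represents the same spin structure; by naturality of the correspondence, (ii) reduces to tracking how $w_J$ transforms under the linear-algebraic effect of the move on $H_2$.

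Blow ups and handle slides are the two elementary cases. A handle slide of $L_i$ over $L_k$ is the change of basis $h_i\mapsto h_i+h_k$ of $H_2(X)$ represented by $P=I+E_{ik}$, under which $Q'=PQP^T$. A direct computation using the characteristic property $x^TQx\equiv x^TQw_J\pmod 2$ shows that $w'=(P^T)^{-1}w_J$ is characteristic for $Q'$, and over $\Z_2$ one has $(P^T)^{-1}=I+E_{ki}$, so $w'=w_J+[i\in J]\,e_k$; this is exactly the rule that $L_k$ flips its membership if and only if $L_i\in J$, with all other entries fixed. A blow up is the stabilization $Q\mapsto Q\oplus(\pm1)$, possibly followed by handle slides if $U$ encircles strands: the old characteristic conditions are untouched, while the condition on the new generator $U$ reads $w'_U+\sum_{j\in J}lk(U,L_j)\equiv1\pmod2$, forcing $U\in J'$ exactly when $\sum_{j\in J}lk(U,L_j)$ is even, and the blow down is its inverse. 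Here one uses that the meridian of a $\pm1$-framed unknot is null-homologous mod $2$ in $M$, so the spin structure is genuinely unchanged.

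For the remaining moves I would reduce to the elementary ones. An $n$-fold Rolfsen twist along a $0$-framed unknot, and the slam dunk together with its inverse, are all compositions of blow ups, blow downs, and handle slides; composing the two transformation rules above and reducing the resulting coefficients modulo $2$ yields the stated membership criteria, including the $n$-dependence in the Rolfsen case and the parity of $n-lk(K,\mathcal{L}_J)$ in the inverse slam dunk. Alternatively, each can be checked directly: write down the new linking matrix, confirm the prescribed sublink satisfies $Q'w'\equiv\mathrm{diag}(Q')\pmod2$, and confirm via the torsor identification that $[w']\in H^1(M;\Z_2)$ is the class of the original spin structure.

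The main obstacle is item (ii): merely checking that the modified sublink is characteristic is not enough, since there are $|H_1(M;\Z_2)|$ characteristic sublinks, one for each spin structure, so one must genuinely certify that the class in $H^1(M;\Z_2)$ is preserved. This is precisely what the naturality of the correspondence of~\cite{Gompf_Stipsicz} buys us, but cashing it out requires careful bookkeeping of orientation and framing-change conventions for each move: the signs appearing when $L_i$ is slid with either orientation over $L_k$, the framing changes of strands passing through the twisted or dunked components, and the decomposition of the Rolfsen twist and the (inverse) slam dunk into elementary moves, where the transient coefficients must be tracked carefully before reducing modulo $2$.
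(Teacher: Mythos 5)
Your overall strategy is sound, and for the first two bullets it is essentially a re-derivation of facts the paper simply cites: the paper's proof disposes of blow ups/downs, handle slides, and Rolfsen twists with a reference to \cite{Gompf_Stipsicz} and devotes all of its actual work to the slam dunk and inverse slam dunk. Your proposed route for those remaining moves --- decompose them into blow ups/downs and handle slides, then compose the elementary transformation rules --- is exactly the route the paper takes. The problem is that your proposal stops at the plan precisely where the proof has to begin.

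The genuine gap is that the slam dunk / inverse slam dunk computation is never carried out. The decomposition is not routine, and the stated parity criterion is not visible until it is done: the paper first unknots $K$ by blowing up $\pm 1$-framed unknots at crossings chosen to have linking number $0$ with $K$ (so no framings change), then blows up $|n|-1$ further unknots to bring the framing of $K$ to $\pm 1$, then blows down $K$, then $U$, then all auxiliary unknots, and it tracks the characteristic sublink through every one of these steps using the blow up/down rule --- including a separate sub-claim that a certain intermediate unknot can never join the sublink because its linking number with the sublink is forced to be odd. Only at the end of this bookkeeping does the criterion ``$n-lk(K,\mathcal{L}_J)$ odd'' emerge, together with the assertion that $K$ itself is never added. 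Your proposal replaces all of this with the sentence that composing the two elementary rules ``yields the stated membership criteria,'' and your closing paragraph concedes that this bookkeeping is ``the main obstacle''; so the only part of the lemma that is not already in the literature is left unproven. A secondary weakness: your certification of point (ii) --- that the tracked sublink represents the \emph{same} spin structure --- rests on an appeal to ``naturality'' of the correspondence of \cite{Gompf_Stipsicz}, but naturality under Kirby moves is essentially what the lemma asserts, so for the elementary moves you must either run the argument at the level of the $4$-manifold (transporting the spin structure under the diffeomorphism realizing the handle slide, and through the connected sum with $\pm\mathbb{CP}^2$ for a blow up) or simply cite the reference, as the paper does; a torsor isomorphism alone cannot single out which characteristic sublink corresponds to the original spin structure.
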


\begin{proof}
     Handle slides, blow ups/downs, and Rolfsen twists are discussed in \cite{Gompf_Stipsicz}. Thus we only prove the case of slam dunks.
     
      We start with a surgery diagram $\mathcal{L}$ of $M$ and a characteristic sublink $\mathcal{L}_J$ in it corresponding to some spin structure $\mathfrak{s}$. Let $\overline{\mathcal{L}}$ be a surgery diagram obtained by adding the canceling pair $K\cup U$ to $\mathcal{L}$, where $K$ has framing $n\in \mathbb{Z}$ and $U$ is a $0$-framed unknot (see Figure \ref{fig:charsublink}). Note that here $K$ might be non trivially linked with $\mathcal{L}$ and $U$ is only linked with $K$ as shown in Figure \ref{fig:charsublink}. In order to see how the characteristic sublink changes we write down a sequence of blow ups/downs to go between $\mathcal{L}$ and $\overline{\mathcal{L}}$ and track the characteristic sublink $\mathcal{L}_J$ under these moves using the first point of this lemma. The first diagram in the Figure \ref{fig:charsublink} shows $K, U$, and the components of $\mathcal{L}$ which might be linked with $K$ are shown in green color. We can turn $K$ into an unknot by blowing up unknots $O_1,\hdots, O_k$ (where $k$ is some positive integer) with framing $x_1,\hdots,x_k\in \{\pm 1\}$ near  crossings in a diagram of $K$ such that linking number of these unknots with $K$ is $0$. The number $k$ is determined by the unknotting number of $K$, where each crossing change corresponds to a $(\pm 1)$-unknot blow up. The unknots $O_i$s are shown in yellow color in Figure \ref{fig:charsublink}. Since the linking number of the blown up unknots $O_i$ with $K$ is $0$ and they do not link with $\mathcal{L}$ at all, the framing of $K$ and $\mathcal{L}$ stay unchanged. We can hence assume that $K$ is an unknot with framing $n \neq 0$, the case for $ n=0$ can be done similarly. We blow up $(|n|-1)$-many unknots $U_1,\hdots, U_{|n|-1}$ with framing $\mp1$, where the sign is meant to be negative if $n$ is positive and vice-versa (the purple unknots in Figure \ref{fig:charsublink}). The new framing of $K$ is $\pm1$ and we have $(|n|-1)$ unknots $U_i$ with $|lk(K,U_i)|=1$. Blow down the unknot $K$ to get the surgery diagram where the framings of the components of $L$ may change depending on the their linking number with $K$. However the framings of the yellow unknots do not change. Then we blow down the unknot $U$ to get the surgery diagram $\mathcal{L}$ union some unknots with framings in $\{+1,-1\}$ which are not linked at all with $\mathcal{L}$. Finally, blow down the extra unknots to get $\mathcal{L}$. 
      
      Now, we trace the characteristic sublink under these moves. We start with the last surgery diagram $\mathcal{L}$ in Figure \ref{fig:charsublink}. We put a star on the the components of $\mathcal{L}$ which are part of the characteristic sublink $\mathcal{L}_J$. The corresponding characteristic sublink on the first diagram, i.e.\ $\overline{\mathcal{L}}$ will depend on the parity of $n-lk(K,\mathcal{L}_J)$. Hence, in the last picture we use blue stars for $n-lk(K,\mathcal{L}_J)$ odd and pink stars for the even case. Note that, in  Figure \ref{fig:charsublink} we didn't put any stars on the green knots which are components of $\mathcal{L}$ because the characteristic sublink restricted to $\mathcal{L}$ never changes in this sequence of blow ups/downs. In the second to last diagram all yellow and purple unknots get added to the characteristic sublink as their linking number with $\mathcal{L}$ is zero (see the first bullet point of the Lemma). After an isotopy, we blow up the unknot $U'$ (middle row, middle diagram). Note that the linking number of $U'$ with the characteristic sublink up to sign is $ lk(U',\mathcal{L}_J)+(|n|-1)$ since $lk(U',O_i)=0$ for all $i$ and $|lk(U',U_i)|=1$ for all $1\leq i\leq |n|-1$. Also note that $lk(K,\mathcal{L}_J)=lk(U',\mathcal{L}_J)$. Therefore, $U'$ gets a star if and only if $n-lk(K,\mathcal{L}_J)$ is odd, that is $U'$ only gets a blue star. Then we blow up a blue unknot (right most diagram in the first row).
      \begin{claim}
         The blue unknot $K'$ never gets a star as its linking number with the characteristic sublink is odd.
      \end{claim} 
      \begin{proof}[Proof of the claim]
          We explain this claim briefly. Let $l$ be the linking number of $U'$ with the characteristic sublink $ \mathcal{L}_J$. We consider two cases depending on whether $U'$ is added to the characteristic sublink or not.
          If $U'$ gets added to $\mathcal{L}_J$ then $l$ is even. The linking number of $K'$ with the characteristic sublink is now $l \pm 1$, which is odd. This is because $lk(K',U')= \pm 1$. So $K'$ does not get a star. If $U'$ is not in the characteristic sublink then $l$ is odd. The linking number of $K'$ with the characteristic sublink is now $l$, and hence it is odd as well. This concludes the proof of the claim.
      \end{proof} 
      
      We blow down the purple $(|n|-1)$ unknots and yellow unknots to get $\overline{\mathcal{L}}$.
      Thus, we showed that that $K$ never gets added to the characteristic sublink and $U$ gets added to the characteristic sublink if and only if $n-lk(K,\mathcal{L}_J)$ is odd.
    \end{proof} 

          \begin{figure}
          \centering
          \includegraphics[width=0.9\linewidth]{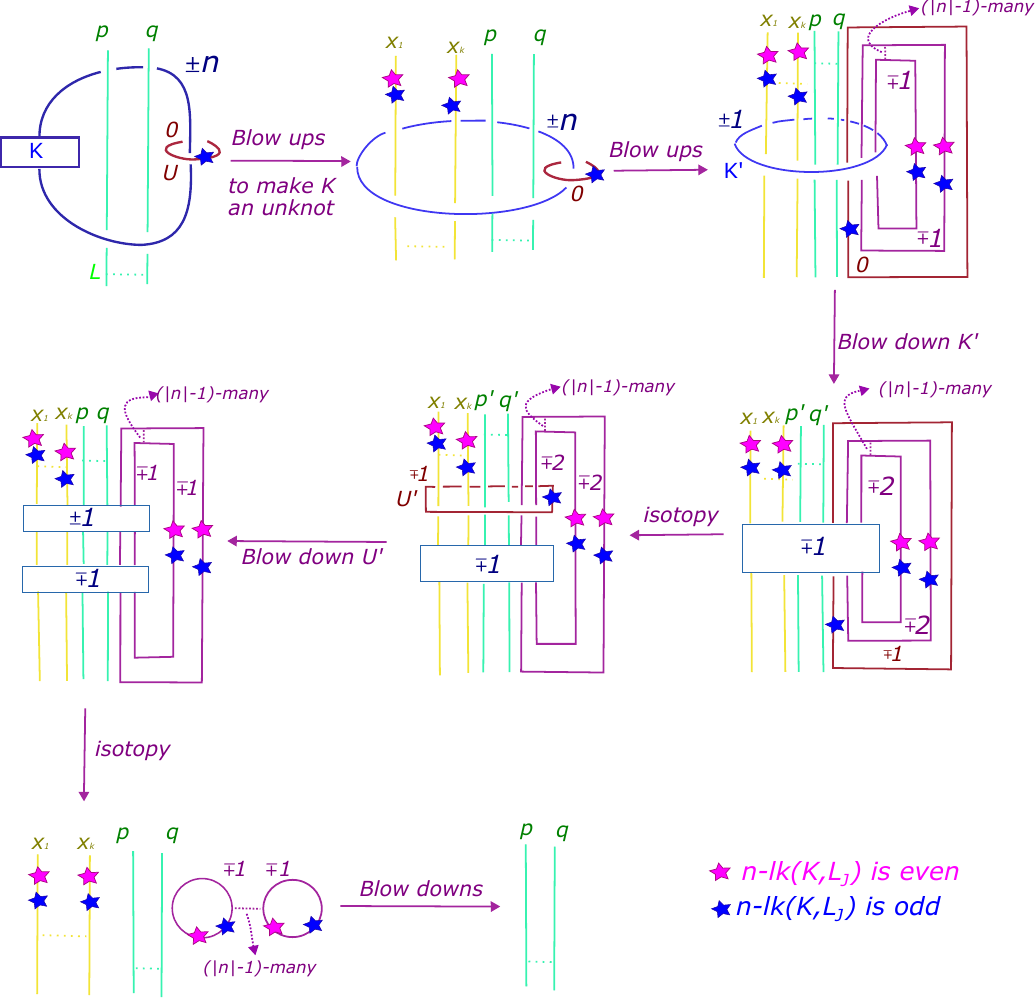}
          \caption{Sequence of blow ups/downs connecting $\overline{\mathcal{L}}$ to $\mathcal{L}$. Here $x_i, p,q$ are framing coefficients and $x_i\in \{\pm 1\}$.}
          \label{fig:charsublink}
      \end{figure}
   
 Later we will also use the following lemma saying that the set of the homotopical invariants of a tangential $2$-plane field on a manifold just depend on its homology.
 
\begin{lemma}[\cite{Gompf_Stein}]\label{lem:homologydependenceonly}
    Let $M$ and $N$ be two orientable, connected and closed 3-manifolds with $H_1(M;\mathbb{Z})\cong H_1(N;\mathbb{Z})$. Then for a fixed spin structure $\mathfrak{s}$ on $M$, there exists a spin structure $\mathfrak{s}'$ on $N$  such the set $\left\{(d_3(\xi),\Gamma(\xi;\mathfrak{s}))\:|\: \xi\in \{\text {tangential $2$-plane field on} M\}\right\}$ is in $1 \colon 1$ correspondence with the set $\left\{(d_3(\xi'),\Gamma(\xi';\mathfrak{s}'))\:|\: \xi'\in \{\text {tangential $2$-plane fields on} N\}\right\}$. 
\end{lemma}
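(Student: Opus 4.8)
The plan is to reduce the statement to the Pontryagin--Thom classification of tangential $2$-plane fields and to observe that the resulting bookkeeping depends only on the group $H_1$. After fixing a coorientation and a Riemannian metric, a tangential $2$-plane field on $M$ is the same datum as a nowhere-zero vector field; since a closed oriented $3$-manifold is parallelizable, a trivialization of $TM$ identifies homotopy classes of such fields with $[M,S^2]$. By Pontryagin's theorem this set is in bijection with framed cobordism classes of framed links in $M$, and Gompf's invariants $\Gamma(\cdot,\mathfrak{s})$ and $d_3$ are precisely the primary and secondary invariants of this classification \cite{Gompf_Stein}: the class $\Gamma(\xi,\mathfrak{s})\in H_1(M)$ is the Poincaré dual of the primary obstruction, normalized using the spin structure $\mathfrak{s}$, while $d_3$ records the secondary (framing) obstruction whenever this is $\mathbb{Q}$-valued, i.e.\ whenever the Euler class $e(\xi)$ is torsion.

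First I would record the structure of the invariant set. The assignment $\xi\mapsto\Gamma(\xi,\mathfrak{s})$ is surjective onto $H_1(M)$, since every homology class is represented by an embedded framed link. For a fixed value $\gamma\in H_1(M)$, the homotopy classes with $\Gamma(\xi,\mathfrak{s})=\gamma$ are organized by the secondary invariant, whose range depends only on $\gamma$. When $\gamma$ is a torsion class---equivalently $e(\xi)$ is torsion, so that $d_3$ is defined---the fiber carries a free transitive action of $H^3(M;\mathbb{Z})\cong\mathbb{Z}$ detected by $d_3$, and is therefore a coset of $\mathbb{Z}$ inside $\mathbb{Q}$. When $\gamma$ is non-torsion, the $\mathbb{Z}$-action has stabilizer $2d(\gamma)\mathbb{Z}$, where $d(\gamma)$ is the divisibility of the image of $\gamma$ in the free quotient $H_1(M)/\mathrm{Tors}$, so the fiber is finite of order $2d(\gamma)$ and the pair is read off at the level of homotopy classes, using this finite secondary invariant in place of the undefined $d_3$. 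The key observation is that both the torsion/non-torsion dichotomy and the integer $d(\gamma)$ are invariants of the abstract pair $(H_1(M),\gamma)$ alone; the precise rational values taken by $d_3$ (which see the linking form of $M$) play no role, because we are only asked for a bijection of sets.

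With this in place the construction is routine. Fix a group isomorphism $\phi\colon H_1(M)\xrightarrow{\ \sim\ }H_1(N)$ and any spin structure $\mathfrak{s}'$ on $N$, so that $\Gamma(\cdot,\mathfrak{s}')$ is likewise surjective onto $H_1(N)$. Because $\phi$ preserves the torsion subgroup and all divisibilities, for every $\gamma$ the fiber over $\gamma$ on $M$ and the fiber over $\phi(\gamma)$ on $N$ have the same cardinality and the same ($\mathbb{Z}$- or $\mathbb{Z}/2d(\gamma)$-) torsor structure. Selecting a bijection between each matched pair of fibers and assembling them over $\gamma\in H_1(M)$ yields the desired $1\colon 1$ correspondence between the two sets of pairs.

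I expect the main obstacle to be the middle step: correctly identifying the range of the secondary invariant over a fixed $\Gamma$-value, that is, the Pontryagin framing indeterminacy $2d(\gamma)$, and verifying that the definedness of $d_3$ is detected by $H_1$ alone. One must also handle the spin structure with some care: passing from $\mathfrak{s}$ to another spin structure translates every $\Gamma$-value by a fixed $2$-torsion class, so the set of pairs is only canonical up to such a translation---which is exactly why the statement provides the freedom to choose an appropriate $\mathfrak{s}'$ on $N$ rather than prescribing it.
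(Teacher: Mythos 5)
Your proof is correct, and at the top level it follows the same strategy as the paper's three-sentence proof: invoke Gompf's classification of plane fields by the pair $(\Gamma,d_3)$ and argue that the resulting bookkeeping depends only on $H_1$. The difference is that you supply the mechanism the paper only gestures at, and in doing so you draw a distinction that the paper elides and that genuinely matters. The paper justifies the lemma by asserting that ``such invariants ultimately depend on the first homology''; read as a statement about the \emph{values} of the invariants this is false, because the fractional part of $d_3$ sees the linking form, exactly as your parenthetical remark warns. Concretely, $L(5,1)$ and $L(5,2)$ both have $H_1\cong\mathbb{Z}_5$, yet with the paper's normalization $d_3(S^3,\xi_{std})=0$ their sets of $d_3$-values modulo $1$ are $\{0,1/5,4/5\}$ and $\{0,2/5,3/5\}$, so no correspondence between the two sets of pairs can preserve the $d_3$-coordinate. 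What you prove instead --- and what is actually true --- is that only the \emph{combinatorial structure} of the classification is governed by $H_1$: via Pontryagin--Thom, $\Gamma(\cdot,\mathfrak{s})$ surjects onto $H_1(M)$, its fiber over a torsion class is a $\mathbb{Z}$-torsor on which $d_3$ is injective, its fiber over a non-torsion class $\gamma$ is finite of order $2d(\gamma)$, and this torsor/divisibility data is an invariant of the abstract pair $(H_1,\gamma)$, so fiberwise bijections can be assembled over any isomorphism $H_1(M)\cong H_1(N)$. This proves the statement exactly as written (a bare $1\colon1$ correspondence of sets), and your handling of non-torsion classes also patches the fact, ignored in the paper, that $d_3$ is not $\mathbb{Q}$-valued there. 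One caveat, aimed at the lemma's formulation rather than at your argument: an unstructured fiberwise bijection of this kind need not match $d_3$-values across distinct $\Gamma$-fibers, so by itself it does not justify how the lemma is invoked in the proof of Theorem~\ref{thm:prototypeforgammaandd3}, where ``equal $d_3$, distinct $\Gamma$'' is transferred from $\#_{i}L(p_i,1)$ to $M$; that transfer needs an extra input (for instance conjugation $\xi\mapsto\bar{\xi}$, which negates $\Gamma$ and preserves $d_3$) that neither your proof nor the paper's provides. In short, your proof establishes precisely what the statement claims, with a level of care that the paper's own proof lacks and, as the lens space example shows, actually requires.
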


\begin{proof}
    This is in reality just a reformulation of the classification theorem for tangential 2-plane fields in \cite{Gompf_Stein}. A homotopy class of a tangential 2-plane field is determined by the pair $(d_3, \Gamma)$. Since such invariants ultimately depend on the first homology of the 3-manifold, if $H_1(M;\mathbb{Z})$ and $H_1(N;\mathbb{Z})$ are isomorphic, there exists a bijection between the possible values for $(d_3,\Gamma)$ once one fixes matching spin structures $\mathfrak{s}$, $\mathfrak{s}'$. 
\end{proof}

\section{Lutz twists and homotopical invariants}\label{sc:homotopy_invariants_lutz_twist}

In this section, we prove Theorem \ref{prop:lutz_twist_rationally_null_homologous}, which is a direct corollary of Lemmas~\ref{lem:effect_of_contact surgery} and~\ref{lem:d_3invariant} below.

\begin{lemma}\label{lem:effect_of_contact surgery}
   Let $T$ be a transverse knot in $(M,\xi)$ and let $(M,\xi_T)$ be the overtwisted contact manifold obtained by a Lutz-twist along $T$. Then for every spin structure $\mathfrak{s}$ on $M$, we have $$\Gamma(\xi_T, \mathfrak{s})-\Gamma(\xi,\mathfrak{s})=-[T].$$
\end{lemma}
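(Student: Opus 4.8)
The plan is to work entirely at the level of the homotopy theory of plane fields, exploiting that a full Lutz twist is supported in a tubular neighborhood of $T$, so that the whole computation reduces to a single local degree. Regard the oriented $2$-plane fields $\xi$ and $\xi_T$ as their positively cooriented unit normal vector fields $v,v_T$, i.e.\ as sections of the unit tangent bundle of $M$; since $M$ is an oriented $3$-manifold this bundle is trivial. Fixing the spin structure $\mathfrak{s}$ determines, up to homotopy over the $2$-skeleton, a trivialization of $TM$, with respect to which $v$ and $v_T$ become maps $M^{(2)}\to S^2$. By Gompf's definition, $\Gamma(\xi,\mathfrak{s})$ is the Poincar\'e dual of the primary obstruction comparing $v$ to the reference section determined by $\mathfrak{s}$. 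The first point is that, because $\xi$ and $\xi_T$ are measured against the \emph{same} reference, the $\mathfrak{s}$-dependent origin cancels in the difference; using the cocycle identity for primary differences one gets
\[
\Gamma(\xi_T,\mathfrak{s})-\Gamma(\xi,\mathfrak{s})=\PD\, d(v,v_T)\in H_1(M;\Z),
\]
where $d(v,v_T)\in H^2(M;\Z)$ is the relative primary difference of the two sections. In particular the difference is manifestly independent of $\mathfrak{s}$, as the statement requires.

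The second step is localization. A full Lutz twist modifies $\xi$ only inside a solid-torus neighborhood $\nu(T)\cong S^1\times D^2$, so $v=v_T$ on $M\setminus\operatorname{int}\nu(T)$; arranging the $1$-skeleton of $M$ to be disjoint from $\nu(T)$ and the $2$-skeleton to meet $\nu(T)$ in a single meridian disk, the difference cochain is supported on that one $2$-cell. Hence $d(v,v_T)$ lies in the image of $H^2(M,M\setminus\operatorname{int}\nu(T);\Z)\cong H^2(\nu(T),\partial\nu(T);\Z)\cong\Z$, the last group being generated by the Thom class, whose image in $H^2(M;\Z)$ is $\PD[T]$. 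Therefore $d(v,v_T)=c\,\PD[T]$ for an integer $c$ equal to the relative degree of $v_T$ against $v$ over the meridian disk, and consequently $\Gamma(\xi_T,\mathfrak{s})-\Gamma(\xi,\mathfrak{s})=c\,[T]$. It is worth emphasizing that this produces an honest integral class and pins down the $2$-torsion part directly, whereas the classical relation $e(\xi)=2\PD\Gamma$ together with $e(\xi_T)=e(\xi)-2\PD[T]$ only determines the difference modulo $2$-torsion --- precisely the regime in which the refinement $\Gamma$ is needed. This is cleaner here than computing both sides via the surgery-diagram formula of Lemma~\ref{lem:Gamma} and tracking characteristic sublinks through Lemma~\ref{lem:sublink_modifications}.

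It remains to evaluate $c$. Here I would use the explicit local model of the full Lutz twist: on $\nu(T)$ with coordinates $(z;r,\phi)$ write the contact form as $\alpha=h_1(r)\,dz+h_2(r)\,d\phi$, where the standard transverse neighborhood corresponds to a path $(h_1,h_2)$ that does not wind around the origin, and the full Lutz twist replaces it by a path carrying the contact planes through one additional full rotation as $r$ runs from $0$ to the boundary radius. Translating this extra rotation of the cooriented plane into the unit normal $v$ and computing the degree of the resulting map $(D^2,\partial D^2)\to (S^2,\mathrm{pt})$ is expected to give $c=-1$ with the orientation conventions fixed in the paper, yielding $\Gamma(\xi_T,\mathfrak{s})-\Gamma(\xi,\mathfrak{s})=-[T]$.

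The main obstacle is exactly this last sign-and-magnitude computation: one must track how the coorientation of $\xi$ selects $v$, how the orientations of $M$ and of $T$ enter both $[T]$ and the Thom class, and the Poincar\'e-duality sign, so as to land on $-1$ rather than $+1$ or an incorrect magnitude. As an independent consistency check I would verify that doubling the answer reproduces the standard Euler-class change $e(\xi_T)-e(\xi)=-2\,\PD[T]$; this fixes $|c|=1$ and the sign on the free part, after which the localized integral computation upgrades it to the exact identity in $H_1(M;\Z)$ including any $2$-torsion. A minor technical point to check along the way is well-definedness of the relative difference class and its independence of the chosen trivialization representing $\mathfrak{s}$, both of which follow from the arrangement of the skeleta described above.
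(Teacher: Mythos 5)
Your overall strategy is sound and genuinely different from the paper's: you interpret $\Gamma(\xi_T,\mathfrak{s})-\Gamma(\xi,\mathfrak{s})$ as the Poincar\'e dual of the primary difference class $d(v,v_T)$ (so the dependence on $\mathfrak{s}$ cancels), localize that class to $\nu(T)$ so it is forced to equal $c\,\PD[T]$ for some integer $c$, and evaluate $c$ as a degree in the local model. The paper instead works diagrammatically: it presents $(M,\xi)$ by contact $(\pm1)$-surgery on a link $\mathcal{L}\subset(S^3,\xi_{std})$, realizes the Lutz twist by two additional contact $(+1)$-surgeries on $L\cup L_2$ following \cite{DingGeigesStipsciz}, and compares the two evaluations of the formula of Lemma~\ref{lem:Gamma}, which requires tracking the characteristic sublink through Kirby moves via Lemma~\ref{lem:sublink_modifications}. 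Your route avoids all of that bookkeeping and is closer in spirit to the classical obstruction-class computations the paper cites as precursors.

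However, there is a genuine error at the step where the actual number is computed: your local model is that of the \emph{full} ($2\pi$) Lutz twist --- ``one additional full rotation'' of the path $(h_1,h_2)$. The Lutz twist in this lemma is the simple ($\pi$) twist: that is the operation realized by the pair of contact $(+1)$-surgeries of \cite{DingGeigesStipsciz} used in the paper's proof, and the one for which $d_3$ changes by $-sl_{\mathbb{Q}}(T)$ in Proposition~\ref{prop:lutz_twist_rationally_null_homologous}. A full Lutz twist does \emph{not} change the homotopy class of the tangential $2$-plane field at all (this is the standard fact behind the Lutz--Martinet theorem, see \cite{Geiges}); concretely, in your model the path $(h_1,h_2)$ returns to the positive $h_1$-axis, so on a meridian disk the normal map runs from the north pole down through the south pole and back up, the two preimage sheets of a regular value cancel, and the relative degree is $c=0$. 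Carried out faithfully, your argument would therefore yield $\Gamma(\xi_T,\mathfrak{s})=\Gamma(\xi,\mathfrak{s})$, contradicting the statement. With the correct $\pi$-twist model, where $h_1(0)=-1$ so that the core becomes negatively transverse, the meridian map starts at the south pole and covers $S^2$ exactly once, giving $c=\pm 1$; at that point the sign determination you postponed (``expected to give $c=-1$'') still has to be carried out. Note also that your proposed consistency check $e(\xi_T)-e(\xi)=-2\,\PD[T]$ is itself a statement about the $\pi$-twist, not the full twist, so invoking it alongside the full-twist model is inconsistent and would have revealed the problem.
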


\begin{proof}
    Let $\mathcal{L}=\cup_{i=1}^nL_i$ be an oriented Legendrian link in $(S^3,\xi_{std})$ such that $(M,\xi)$ is obtained by contact $(\pm1)$-surgery along $\mathcal{L}$. By Lemma~\ref{lem:knot_in_complement} we can choose an oriented Legendrian knot $L_M$ in the complement of $\mathcal{L}$ whose negative transverse push-off is isotopic to $T$ in $(M,\xi)$. Let $L$ denotes the Legendrian knot in $(S^3,\xi_{std})$ which maps to $L_M$ under the contact $(\pm 1)$-surgeries along $\mathcal{L}$.
    By \cite{DingGeigesStipsciz}, we get a surgery diagram of $(M,\xi_T)$ by performing the same contact $(\pm1)$-surgeries on $\mathcal{L}$ followed by a pair of contact $(+1)$-surgeries on $L$ and the $2$-fold positive stabilization $L_2$ of a push-off of $L$.  Fix a spin structure $\mathfrak{s}$ on $M$ to compute the $\Gamma$-invariants from these two surgery descriptions. It is important to remember that a surgery description only determines $M$ up to contactomorphism. As such, to obtain our results, we need to always factor in the isomorphisms used to identify two different descriptions of the manifold.
    
    Let $\mu_i$ be the meridians of $L_i$ for $1\leq i \leq n$. Let $\mu_{n+1}, \mu_{n+2}$ denote the meridians of $L$ and $L_2$, respectively. Then we can write the homology class $[L_M]=\sum_{i=1}^n\alpha_i\mu_i$, for $\alpha_i \in \mathbb{Z}$, seen as a homology class in $M$ given by the surgery description along $\mathcal{L}$. 
    
    Let $\overline{\mathcal{L}}=\mathcal{L}\cup L\cup L_2$ be the contact surgery diagram for $(M,\xi_T)$ as described before and let $Q, \overline{Q}$ be the linking matrices for $\mathcal{L}$ and $\overline{\mathcal{L}}$, respectively. Let $r_i$ denote the rotation number of $L_i$, for $i=1,\ldots, n$. Let $r$ and $t$ be the rotation and Thurston--Bennequin number of $L$ in $(S^3,\xi_{std})$. At times, we will use the notation $r_{n+1}$ and $r_{n+2}$ to denote the rotation numbers of $L$ and $L_2$, respectively. Then
    \begin{align*}
    \overline{Q}=\begin{bmatrix}
       & && \alpha_1& \alpha_1\\
        &Q&&\vdots &\vdots\\
        &&&\alpha_n&\alpha_n\\
        \alpha_1&\hdots&\alpha_n&t+1&t\\
        \alpha_1&\hdots&\alpha_n&t&t-1 
    \end{bmatrix}_{(n+2)\times (n+2)}
\end{align*}

It is not hard to see, using the above matrix and elementary row-column operations, that $\mu_{n+1}+\mu_{n+2}=0$. 
In order to compute the $\Gamma$-invariant of $(M,\xi;\mathfrak{s})$, we fix a characteristic sublink $\mathcal{L}_J$ of $\mathcal{L}$ corresponding to a spin structure $\mathfrak{s}$.

\noindent \begin{claim}
    The characteristic sublink of $\overline{\mathcal{L}}$ corresponding to $\mathfrak{s}$ is the same as the sublink $\mathcal{L}_J$ of $\mathcal{L}$ if the quantity $t-lk(K,\mathcal{L}_J)$ is odd, where $t$ is the Thurston--Bennequin number of $L$. Otherwise, the characteristic sublink of $\overline{\mathcal{L}}$ corresponding to $\mathfrak{s}$ is $\mathcal{L}_J\cup L \cup L_2 $.
\end{claim} 

\begin{proof}[Proof of the claim]
    The proof of the claim follows from analyzing the sequence of Kirby moves giving the isomorphism between $(M,\mathcal{L})$ and $(M,\overline{\mathcal{L}})$ --and reversing them (see Figure \ref{figure:Kirby_moves} and Figure \ref{figure:inverse_Kirby_moves}). Using the explicit Kirby moves and the discussion in Lemma \ref{lem:sublink_modifications} regarding characteristic sublink and spin structures it is clear that the claim follows.
    First one uses the last bullet of Lemma \ref{lem:sublink_modifications} to determine when the unknot and $L$ in the top picture of Figure \ref{figure:inverse_Kirby_moves} are added to the characteristic sublink. The knot $L$ is never added to the characteristic sublink, while the unknot is added when $t+1-lk(K,\mathcal{L}_J)$ is odd -or equivalently when $t-lk(K,\mathcal{L}_J)$ is even. Then, subsequent moves as described in Figure \ref{figure:inverse_Kirby_moves} involve a handle slide -- which adds $L$ to the characteristic sublink if and only if the unknot is already in the characteristic sublink -- and isotopies, which do not affect the sublink. Thus, we see that $L$ and $L_2$ get added to characteristic sublink if and only if $t-lk(K,\mathcal{L}_J)$ is even, and neither of these gets added to the characteristic sublink if $t-lk(K,\mathcal{L}_J)$ is odd.
\end{proof}

\begin{figure}[htbp]
\centering
\includegraphics[width=0.7\linewidth]{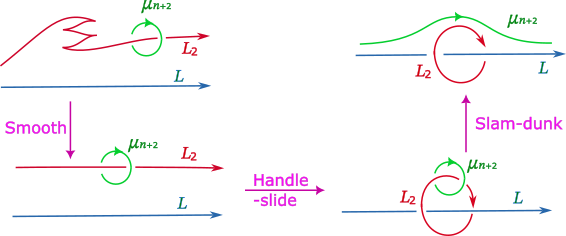}
\caption{
The sequence of Kirby moves that gives the homeomorphism between $(M,\mathcal{L})$ and $(M,\overline{\mathcal{L}})$.
}
\label{figure:Kirby_moves}
\end{figure}
\begin{figure}[htb]
\centering
\includegraphics[width=0.7\linewidth]{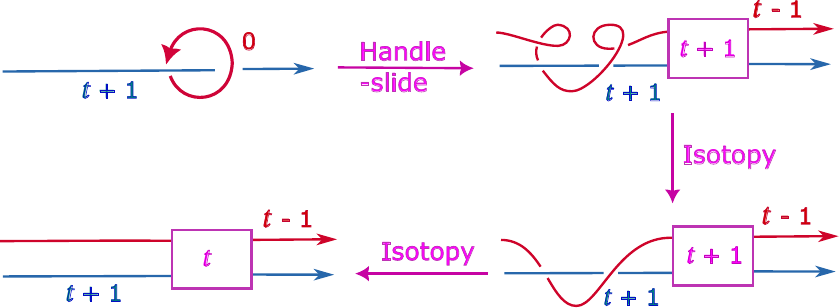}
\caption{Showing that the characteristic sublink remains unchanged while reversing the Kirby moves.}
\label{figure:inverse_Kirby_moves}
\end{figure}

Having established the relation between the characteristic sublink $\mathcal{L}_J$ of $\mathcal{L}$ corresponding to $\mathfrak{s}$ and the characteristic sublink $\overline{\mathcal{L}}_J$ of $\overline{\mathcal{L}}$ corresponding to $\mathfrak{s}$ we can now compute the $\Gamma$-invariants of the contact structures $\xi$ and $\xi_T$ using the formula from Lemma~\ref{lem:Gamma}. Recall that $\mu_{n+1}+\mu_{n+2}=0$ and $r_{n+2}=r_{n+1}+2$, we will use these identities in the computation of the $\Gamma$-invariants.

We first consider the case when $t-lk(K,\mathcal{L}_J)$ is odd. That is the characteristic sublink for $\overline{\mathcal{L}}$ is the same as the characteristic sublink $L_J$ of $\mathcal{L}$. Also note that $lk(K,\mathcal{L}_J)=\sum_{j\in J}\alpha_j$, therefore $r_{n+1}+\sum_{j\in J}\alpha_j$ is even. Thus we have
\begin{align*}
    \Gamma(\xi_T;\mathfrak{s})&=\frac{1}{2}\left\{\sum_{i=1}^{n+2} r_i\mu_i+\sum_{j\in J} (\Bar{Q}\mu)_j\right\}\\
    &=\Gamma(\xi,\mathfrak{s})+\frac{1}{2}\left\{ r_{n+1}\mu_{n+1}+(r_{n+1}+2)\mu_{n+2}+\sum_{j\in J}\alpha_j(\mu_{n+1}+\mu_{n+2})\right\}\\
    &=\Gamma(\xi,\mathfrak{s})+\mu_{n+2}+\frac{r_{n+1}+\sum_{j\in J}\alpha_j}{2}(\mu_{n+1}+\mu_{n+2})\\
    &=\Gamma(\xi,s)+\mu_{n+2}.
\end{align*}

Now let us consider the case when $t-lk(K,\mathcal{L}_J)$ is even. That is the characteristic sublink of $\overline{\mathcal{L}}$ is $\mathcal{L}_J\cup K\cup U$, where $\mathcal{L}_J$ is the characteristic sublink of $\mathcal{L}$. Then $\sum_{j\in J}\alpha_j+r_{n+1}+1$ is even and thus we have
\begin{align*}
    &\Gamma(\xi_T;\mathfrak{s})=\frac{1}{2}\left\{\sum_{i=1}^{n+2} r_i\mu_i+\sum_{j\in J} (\Bar{Q}\mu)_j+2\sum_{i=1}^n\alpha_i\mu_i+(t+1)\mu_{n+1}+t(\mu_{n+2}+\mu_{n+1})+(t-1)\mu_{n+2}\right\}\\
    &=\Gamma(\xi,\mathfrak{s})+\frac{1}{2}\left\{\sum_{j\in J}\alpha_j(\mu_{n+1}+\mu_{n+2})+2\sum_{i=1}^n\alpha_i\mu_i+(r_{n+1}+2t+1)\mu_{n+1}+(r_{n+2}+2t-1)\mu_{n+2}\right\}\\
&=\Gamma(\xi,\mathfrak{s})+\sum_{i=1}^n\alpha_i\mu_i+\frac{1}{2}\left\{(\sum_{j\in J}\alpha_j+r_{n+1}+2t+1)\right\}(\mu_{n+1}+\mu_{n+2})\\
&=\Gamma(\xi,\mathfrak{s})-(t+1)\mu_{n+1}-t\mu_{n+2}\\
&=\Gamma(\xi,\mathfrak{s})+\mu_{n+2}\\
\end{align*}
Figure \ref{figure:Kirby_moves} induces a diffeomorphism $f\colon M\rightarrow M$, that acts on homology as follows 
\begin{align*}
f_*\colon H_1(M,\mathbb{Z})&\longrightarrow H_1(M,\mathbb{Z})\\
    \mu_{i} &\longmapsto \mu_i,\: \textrm{ for all }\: 1\leq i \leq n,\\
    \mu_{n+2} &\longmapsto [L].
\end{align*}
 Therefore, we can conclude  that the difference is given by
\begin{equation*}
    f_*\big(\Gamma(\xi_T,\mathfrak{s})\big) -\Gamma(\xi,\mathfrak{s}) =f_*(\mu_{n+2})=[L]=-[T].\qedhere
\end{equation*}
\end{proof}
\begin{remark}
    Note that in the previous proof we could have chosen $L$ with $T$ as its positive transverse push-off. Then the knot $L_2$ becomes a negative 2-fold stabilization of a push-off of $L$, and the homology classes satisfy $[T]=[L]$. The computations in this case still give $\Gamma(\xi_T, \mathfrak{s})-\Gamma(\xi, \mathfrak{s)}=-[T]$. Similarly, this will apply to the proof of Lemma~\ref{lem:d_3invariant} below.
\end{remark}
We now turn our attention to the $d_3$-invariant of a contact structure for which the Euler class is torsion.

\begin{lemma}\label{lem:d_3invariant}
Let $(M,\xi)$ be a rational homology contact $3$-sphere and let $T$ be a transverse knot in $M$ with rational self-linking number $sl_{\mathbb{Q}}(T)$. Let $\xi_T$ be the contact structure on $M$ obtained by performing a Lutz twist along $T$. Then their $d_3$-invariants are related by
    \begin{align*}
        d_3(\xi_T)-d_3(\xi)=-sl_{\mathbb{Q}}(T).
\end{align*}
\end{lemma}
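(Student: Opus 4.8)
The plan is to run the same surgery computation as in Lemma~\ref{lem:effect_of_contact surgery}, but now feeding the diagrams into the $d_3$-formula of Lemma~\ref{lem:d3} rather than the $\Gamma$-formula. Concretely, I would again present $(M,\xi)$ by contact $(\pm1)$-surgery along $\mathcal{L}=\cup_{i=1}^n L_i$ with linking matrix $Q$, rotation vector $r=(r_1,\dots,r_n)^T$, and $q$ positive surgeries, and choose (via Lemma~\ref{lem:knot_in_complement}) a Legendrian $L$ whose negative transverse push-off is $T$, with $tb(L)=t$, $rot(L)=r_{n+1}$, and $lk(L,L_i)=\alpha_i$. Then $(M,\xi_T)$ is presented by $\overline{\mathcal{L}}=\mathcal{L}\cup L\cup L_2$ with the matrix $\overline{Q}$ displayed above, rotation vector $\overline{r}=(r_1,\dots,r_n,r_{n+1},r_{n+1}+2)^T$, and $q+2$ positive surgeries. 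Since $d_3$ is a genuine contactomorphism invariant, no identifying diffeomorphism $f$ is needed here, which makes this computation cleaner than the $\Gamma$-case.

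First I would determine the signature and rank changes. A congruence that subtracts the last row and column from the penultimate ones, followed by clearing the $\alpha$-coupling using the resulting hyperbolic pair, shows that $\overline{Q}$ is congruent to the block sum $Q\oplus\matrixp{0}{1}{1}{t-1}$. The $2\times2$ block has determinant $-1$, hence signature $0$ and rank $2$, so $\sigma(\overline{Q})=\sigma(Q)$ and $\operatorname{rk}(\overline{Q})=\operatorname{rk}(Q)+2$. Nondegeneracy of both matrices follows from $M$ being a rational homology sphere, so the required rational solutions of $Qb=r$ and $\overline{Q}\,\overline{b}=\overline{r}$ exist and are unique.

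Next I would solve $\overline{Q}\,\overline{b}=\overline{r}$ with $\overline{b}=(b'_1,\dots,b'_n,\beta,\gamma)^T$. Subtracting the last two rows gives $\beta+\gamma=-2$ (the algebraic shadow of $\mu_{n+1}+\mu_{n+2}=0$), whence $Qb'=r+2\alpha$, i.e.\ $b'=b+2Q^{-1}\alpha$, and the penultimate equation pins down $\gamma$. Substituting into the $d_3$-formula of Lemma~\ref{lem:d3} and using $\sigma(\overline{Q})-\sigma(Q)=0$, $\operatorname{rk}(\overline{Q})-\operatorname{rk}(Q)=2$, together with the extra $+2$ coming from the two positive surgeries, the difference collapses to
\[
d_3(\xi_T)-d_3(\xi)=\alpha^TQ^{-1}r+\alpha^TQ^{-1}\alpha-r_{n+1}-t,
\]
where $r$ denotes the rotation vector of $\mathcal{L}$ and $r_{n+1}=rot(L)$.

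Finally I would identify the right-hand side with $-sl_{\mathbb{Q}}(T)$. The rational surgery formulas for the classical invariants of $L_M$ read $tb_{\mathbb{Q}}(L_M)=t-\alpha^TQ^{-1}\alpha$ and $rot_{\mathbb{Q}}(L_M)=r_{n+1}-\alpha^TQ^{-1}r$, and since $T$ is the negative transverse push-off of $L_M$ we have $sl_{\mathbb{Q}}(T)=tb_{\mathbb{Q}}(L_M)+rot_{\mathbb{Q}}(L_M)$; combining these gives exactly $-sl_{\mathbb{Q}}(T)=\alpha^TQ^{-1}r+\alpha^TQ^{-1}\alpha-r_{n+1}-t$, completing the proof. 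I expect the main obstacle to be precisely this last identification: the bulk of the linear algebra is routine bookkeeping, but matching the algebraic expression to the geometric invariant hinges on getting the rational $tb_{\mathbb{Q}}$, $rot_{\mathbb{Q}}$, and self-linking conventions and signs exactly right, consistent with the choice of negative versus positive transverse push-off.
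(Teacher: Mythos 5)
Your proposal is correct and follows essentially the same route as the paper's proof: the same presentation of $(M,\xi_T)$ by adding two contact $(+1)$-surgeries along $L$ and its doubly stabilized push-off $L_2$, the same $d_3$-formula from Lemma~\ref{lem:d3}, and the same identification of the resulting expression with $-sl_{\mathbb{Q}}(T)$ via the rational $tb_{\mathbb{Q}}$, $rot_{\mathbb{Q}}$ surgery formulas and the negative-push-off convention $sl_{\mathbb{Q}}(T)=tb_{\mathbb{Q}}(L_M)+rot_{\mathbb{Q}}(L_M)$. The only differences are cosmetic linear algebra: you get $\sigma(\overline{Q})=\sigma(Q)$ and $\operatorname{rk}(\overline{Q})=\operatorname{rk}(Q)+2$ by a congruence of $\overline{Q}$ to $Q\oplus\left(\begin{smallmatrix}0&1\\1&t-1\end{smallmatrix}\right)$ and solve $\overline{Q}\,\overline{b}=\overline{r}$ directly, whereas the paper computes the Schur complement (with $\det S=-1$) and the explicit block inverse of $\overline{Q}$; both yield the identical intermediate formula $d_3(\xi_T)-d_3(\xi)=\alpha^{T}Q^{-1}r+\alpha^{T}Q^{-1}\alpha-r_{n+1}-t$.
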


\begin{proof}
Let $(M,\xi)$ be a contact $3$-manifold obtained by integer surgery along an \emph{oriented} Legendrian link 
$\mathcal{L}=\cup_{i=1}^nL_i$ in $(S^3,\xi_{std})$. Let $T$ be a transverse knot in $(M,\xi)$, and let $(M,\xi_T)$ denote the contact manifold obtained by a single Lutz twist along $T$. As in the proof of Lemma \ref{lem:Gamma} we can choose an oriented Legendrian knot $L_M$ in the complement of $\mathcal{L}$ whose negative transverse push-off is isotopic to $T$ in $(M,\xi)$. Let $L$ denote the Legendrian knot in $(S^3,\xi_{std})$ which maps to $L_M$ under the contact $(\pm 1)$-surgeries along $\mathcal{L}$.
Then a surgery diagram of $(M,\xi_T)$ is given by performing the same contact $(\pm1)$-surgeries on $\mathcal{L}$ followed by a pair of contact $(+1)$-surgeries on $L$ and the $2$-fold positive stabilization $L_2$ of a push-off of $L$. Let $\overline{\mathcal{L}}=\mathcal{L}\cup L \cup L_2$. and let $\mu_i$ denote the meridians of $L_i$ for $1\leq i \leq n$ and  $\mu_{n+1}, \mu_{n+2}$ denote the meridians of $L$ and $L_2$, respectively. We write the homology class $[L]=\sum_{i=1}^n\alpha_i\mu_i$, for some  $\alpha_i$'s. We write $Q$ and $\overline{Q}$ be the linking matrices for $\mathcal{L}$ and $\overline{\mathcal{L}}$, respectively. Let $r_i$ denote the rotation number of $L_i$, for $i=1,\ldots, n$. Let $r$ and $t$ be the rotation and Thurston--Bennequin number of $L$. 

With this notational setup, we will compute the $d_3$-invariants of $\xi$ and $\xi_T$ using Lemma \ref{lem:d3}.
Since $H_1(M;\mathbb{Q})=0$, the absolute value of the determinant of the linking matrix equals the order of $H_1(M)$. Moreover, the Euler class is guaranteed to be torsion, and all the relevant objects (such as $Q^{-1}$ and $d_3$) are well-defined. 

We start by comparing the signatures and inverses of the linking matrices. 
We can write $\overline{Q}$ as
\[
\overline{Q} = \begin{bmatrix}
Q & \mathbf{\alpha} & \mathbf{\alpha} \\
\mathbf{\alpha}^\top & t+1 & t \\
\mathbf{\alpha}^\top & t & t-1
\end{bmatrix}
\]
We can rewrite \(\overline{Q}\) as a block matrix
\[
\overline{Q} = \begin{bmatrix}
Q & A \\
A^\top & B
\end{bmatrix}
\]
where
\[
A = \begin{bmatrix} \mathbf{\alpha} & \mathbf{\alpha} \end{bmatrix}, \quad B = \begin{bmatrix} t+1 & t \\ t & t-1 \end{bmatrix}.
\]
We know that $Q$ is invertible over $\mathbb{Q}$. The Schur complement of $Q$ is
\[
S = B - A^\top Q^{-1} A.
\]
We compute 
\[
A^\top Q^{-1} A = \begin{bmatrix}
\mathbf{\alpha}^\top Q^{-1} \mathbf{\alpha} & \mathbf{\alpha}^\top Q^{-1} \mathbf{\alpha} \\
\mathbf{\alpha}^\top Q^{-1} \mathbf{\alpha} & \mathbf{\alpha}^\top Q^{-1} \mathbf{\alpha}
\end{bmatrix} = d \begin{bmatrix} 1 & 1 \\ 1 & 1 \end{bmatrix}
\]
where
\[
d = \mathbf{\alpha}^\top Q^{-1} \mathbf{\alpha}
\]
Therefore
\[
S = \begin{bmatrix} t+1 & t \\ t & t-1 \end{bmatrix} - d \begin{bmatrix} 1 & 1 \\ 1 & 1 \end{bmatrix} = \begin{bmatrix} t+1 - d & t - d \\ t - d & t - 1 - d \end{bmatrix}
\]
We compute the determinant of $S$ as
\begin{align*}
\det(S) =& (t+1 - d)(t-1 - d) - (t - d)^2\\
=& (t+1)(t-1) - (t+1 + t-1)d + d^2 - (t^2 - 2td + d^2)\\
=& (t^2 - 1) - 2td + d^2 - (t^2 - 2td + d^2) = -1.
\end{align*}
Since \(S\) is \((2 \times 2)\)-matrix and \(\det(S) < 0\), it has a positive and a negative eigenvalue and thus the signatures of $Q$ and $\overline{Q}$ agree.

Next we describe the inverse of $\overline{Q}$ as
\[
\overline{Q}^{-1} = \begin{bmatrix}
Q^{-1} + Q^{-1} A S^{-1} A^\top Q^{-1} & -Q^{-1} A S^{-1} \\
-S^{-1} A^\top Q^{-1} & S^{-1}
\end{bmatrix}
\]
and compute the relevant terms as
\begin{align*}
    &S^{-1} = \frac{1}{\det(S)} \begin{bmatrix} t - 1 - d & -(t - d) \\ -(t - d) & t+1 - d \end{bmatrix} = -\begin{bmatrix} t - 1 - d & -(t - d) \\ -(t - d) & t+1 - d \end{bmatrix}
= \begin{bmatrix} d + 1 - t & t-d \\  t-d & d - t - 1 \end{bmatrix}\\
&A S^{-1} A^\top = \mathbf{\alpha} \begin{bmatrix} 1 & 1 \end{bmatrix} S^{-1} \begin{bmatrix} 1 \\ 1 \end{bmatrix} \mathbf{\alpha}^\top = \mathbf{\alpha} (1-1) \mathbf{\alpha}^\top=0
\end{align*}
If we define $\mathbf{v} = Q^{-1} \mathbf{\alpha}$  
we obtain
\[
\overline{Q}^{-1} = \begin{bmatrix}
Q^{-1}  & -\mathbf{v} \begin{bmatrix} 1& -1\end{bmatrix} \\
-\begin{bmatrix} 1 \\-1 \end{bmatrix} \mathbf{v}^\top & \begin{bmatrix}
d + 1 - t &  t-d \\
t-d & d - t - 1
\end{bmatrix}
\end{bmatrix}.
\]
We proceed to compute the $d_3$ invariant of $\xi_T$. The vector of  rotation numbers is $\mathbf{z} = (\mathbf{r}, r,r+2)$ where $\mathbf{r} = (r_1, \ldots, r_n)$.
We compute
\begin{align*}
  \mathbf{z}^\top \overline{Q}^{-1} \mathbf{z} &= \mathbf{r}^\top Q^{-1} \mathbf{r} - 2 \mathbf{r}^\top \mathbf{v} \begin{bmatrix} 1 & -1 \end{bmatrix} \begin{bmatrix} r \\ r+2 \end{bmatrix} + \begin{bmatrix} r \\ r+2 \end{bmatrix}^\top S^{-1} \begin{bmatrix} r \\ r+2 \end{bmatrix}  \\
  &=\mathbf{r}^\top Q^{-1} \mathbf{r} -2 \mathbf{r}^\top \mathbf{v} ( r + -1 (r+2)) +\mathbf{z}_2^\top S^{-1} \mathbf{z}_2\\
  &=\mathbf{r}^\top Q^{-1} \mathbf{r} +4 \mathbf{r}^\top \mathbf{v} -4r+4d-4t-4.
\end{align*}
The $d_3$-invariant of $(M,\xi_T)$ is hence given by 
$$ \frac{1}{4}\big(\mathbf{r}^\top Q^{-1} \mathbf{r} +4\mathbf{r}^\top \mathbf{v} -4r+4d-4t-4 -3\sigma(\Bar{Q}) -2\operatorname{rk}(\Bar{Q})\big)-\frac{1}{2}+(q+2), $$
where $q$ is the number of $+1$ surgeries in $\mathcal{L}$. Since $\sigma (\Bar{Q})=\sigma (Q)$ and $\operatorname{rk}(\Bar{Q})=\operatorname{rk}(Q)+2$, the difference of the $d_3$-invariants computes as
\begin{align*}
d_3(\xi_T)-d_3(\xi)=-(r+t)+\mathbf{r}^\top \mathbf{v}+d.
 \end{align*}
 
Now we write
$$ \mathbf{r}^\top \mathbf{v} + d =\langle r+\alpha, Q^{-1} \alpha\rangle.
$$
 Since $L$ (the Legendrian push-off of the transverse knot $T$) is rationally nullhomologous in $(M,\xi)$, we get $[L]=\sum_i\alpha_i\mu_i\in H_1(M)$. By \cite{Kegel-Durst} there exists a $c\in \mathbb{Z}^n$ such that $Qc=D\alpha$ for some non-zero $D\in\Z$ and
\begin{align*}
    tb_{\mathbb{Q}}(L_M)=t-\frac{1}{D}\sum_i c_i\alpha_i \,\textrm{ and }\, rot_{\mathbb{Q}}(L_M)=r-\frac{1}{D}\sum_ic_ir_i
    \end{align*}
which implies that
\begin{align*}
        \langle r+\alpha, Q^{-1} \alpha\rangle=\langle r+\alpha, \frac{1}{D}c \rangle
         = t+r-tb_{\mathbb{Q}}(L_M)-rot_{\mathbb{Q}}(L_M).
\end{align*}   
Combining these and using that $sl_\Q(T)=tb_{\mathbb{Q}}(L_M)+rot_\Q(L_M)$ we obtain the claimed formula
$$ d_3(\xi_T)-d_3(\xi)=-(t+r) + (t+r -(tb_{\mathbb{Q}}(L_M)+rot_{\mathbb{Q}}(L_M)))=-(tb_{\mathbb{Q}}(L_M)+rot_{\mathbb{Q}}(L_M)).\qedhere$$
\end{proof}

\section{Relations between \texorpdfstring{$\Gamma$}{Gamma} and \texorpdfstring{$d_3$}{d3}.}
   In this section we prove Theorem \ref{thm:d_3determinesinvariants}. For that we first state a preliminary result.
   
   \begin{lemma}(Baker--Etnyre \cite[Lemma 1.2]{baker_etnyre})\label{lem:bakr_etnyre}
       For every rationally null-homologous knot $T$ of order $D$ in $(M,\xi)$, there is a transverse null-homologous knot $T'$ in $(M,\xi)$ with $sl_{\mathbb{Q}}(T)=\frac{sl(T')}{D}$.
   \end{lemma}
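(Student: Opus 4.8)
The plan is to realize $T'$ explicitly as the boundary of a rational Seifert surface of $T$ and to read off both self-linking numbers from the \emph{same} surface, so that the factor $D$ becomes purely a normalization. Recall that, since $T$ is transverse and rationally null-homologous of order $D$, it bounds a rational Seifert surface: a properly embedded oriented surface $\Sigma$ in the exterior $M\setminus \mathring{N}(T)$ whose boundary $\partial\Sigma\subset \partial N(T)$ runs $D$ times longitudinally around $T$, i.e.\ $[\partial\Sigma]=D[\lambda]+p[\mu]$ for the contact-framing longitude $\lambda$, the meridian $\mu$, and the integer $p$ determined by the rational longitude. By definition, $sl_{\mathbb{Q}}(T)$ is $\tfrac1D$ times the relative Euler number of $\xi$ over $\Sigma$, computed with respect to the transverse framing of $T$ along $\partial\Sigma$.

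First I would construct $T'$ as a connected transverse knot isotopic to $\partial\Sigma$. Concretely, I would take $D$ coherently oriented transverse push-offs of $T$ and band them together while inserting the meridional twisting dictated by the slope of $\partial\Sigma$, so that the result is a single connected curve bounding a tubing $\Sigma'$ of $\Sigma$. Since push-offs of a transverse knot are transverse and the bands can be inserted compatibly with $\xi$, the knot $T'$ is transverse; and since it bounds $\Sigma'$ it is genuinely null-homologous, so its integral self-linking number $sl(T')$ is defined.

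Finally I would compare the two invariants. Because $\Sigma'$ is an honest Seifert surface for $T'$, the self-linking $sl(T')$ equals the relative Euler number of $\xi$ over $\Sigma'$ taken with the contact framing of $T'$ along its boundary. On each of the $D$ longitudinal strands this framing agrees with the transverse framing of $T$, and the connecting bands are chosen so as not to alter the count; hence this relative Euler number equals the \emph{un-normalized} quantity whose $\tfrac1D$-multiple defines $sl_{\mathbb{Q}}(T)$. Therefore $sl(T')=D\cdot sl_{\mathbb{Q}}(T)$, which is exactly the claimed identity $sl_{\mathbb{Q}}(T)=sl(T')/D$.

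The main obstacle is the bookkeeping in the middle step: ensuring that the $D$ push-offs can be banded into a \emph{connected} transverse knot whose Seifert surface still computes the same relative Euler number, with no stray $\pm 1$ corrections from the transverse band sums, and that the resulting $\partial\Sigma$-curve really is transverse. A clean way to avoid the tempting $D$-versus-$D^2$ trap is precisely to phrase everything through the single surface $\Sigma$ and its relative Euler number, rather than through the pairwise rational linking numbers of the $D$ copies; organised this way, the factor $D$ enters only once, through the normalization in the definition of $sl_{\mathbb{Q}}$.
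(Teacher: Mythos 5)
Your route is genuinely different from the paper's. The paper disposes of this lemma in two lines: it quotes \cite[Lemma 1.2]{baker_etnyre}, which already produces, from a rationally null-homologous \emph{Legendrian} knot $L'$ of order $D$, a null-homologous Legendrian $L$ with $tb_{\mathbb{Q}}(L')=tb(L)/D$ and $rot_{\mathbb{Q}}(L')=rot(L)/D$, and then converts to the transverse statement by taking transverse push-offs and using $sl_{\mathbb{Q}}=tb_{\mathbb{Q}}\mp rot_{\mathbb{Q}}$. You instead rebuild the transverse statement from scratch out of the rational Seifert surface, realizing $T'$ as a transverse cable representing $\partial\Sigma$ and comparing relative Euler numbers over the same surface. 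This is not so much an application of the cited result as a re-derivation of it (it is essentially Baker--Etnyre's own underlying construction), and your organizing idea --- read both invariants off one surface so that $D$ enters only through the normalization --- is indeed the idea that makes such a proof work.

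However, as written the argument has a genuine gap at exactly its central step, compounded by framing language that is not well defined. First, a transverse knot has no ``contact framing'' and no canonical ``transverse framing''; the boundary trivialization in the relative-Euler-number formula for $sl(T',\Sigma')$ is the \emph{surface framing}, i.e.\ the line field $T\Sigma'\cap\xi$ along $\partial\Sigma'$, and you also need to check that the normalized relative Euler number you take as the \emph{definition} of $sl_{\mathbb{Q}}$ agrees with Baker--Etnyre's definition. Second, the assertions that the bands ``do not alter the count'' and that the framing on each of the $D$ strands ``agrees with the transverse framing of $T$'' are precisely what needs proof, and taken literally they are false: each transverse band sum shifts the self-linking number, and the framing induced on the strands by $\Sigma'$ differs from the one induced on $T$ by $\Sigma$. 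What is true is that these nonzero corrections cancel \emph{in total}, and they do so only because the cable slope of $T'$ is the rational longitude: for a cable in the class $D\lambda+n\mu$ the same construction yields $sl(T')=D\,sl_{\mathbb{Q}}(T)+(D-1)(n-n_0)$, where $n_0/D$ is the rational-longitude slope, so the identity you want holds exactly when $n=n_0$ and fails otherwise. Verifying this cancellation is the actual mathematical content of the lemma; flagging it as ``the main obstacle'' and then asserting that the surface-based bookkeeping avoids it leaves the proof as an outline rather than a proof. If you do not want to carry out that computation, the efficient fix is the paper's: Legendrian-approximate $T$, apply \cite[Lemma 1.2]{baker_etnyre}, and convert with $sl_{\mathbb{Q}}=tb_{\mathbb{Q}}\mp rot_{\mathbb{Q}}$.
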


   \begin{proof}
       In \cite[Lemma 1.2]{baker_etnyre} it is shown that for every rational null-homologous Legendrian knot $L'$ of order $D$ in $(M,\xi)$, there exists a null-homologous Legendrian knot $L$ in $(M,\xi)$ so that $tb_{\mathbb{Q}}(L')=\frac{tb(L)}{D}$ and $rot(L')=\frac{rot(L)}{D}$. By taking the transverse push-offs $T$ and $T'$ of $L$ and $L'$ we get the claimed statement.
   \end{proof}
   
\begin{lemma}\label{lem:not_dividing}
    Let $M$ be a rational homology sphere whose integral first homology $H_1(M;\mathbb{Z})$ only contains even torsion and let $\xi$ be a contact structure on $M$ with torsion Euler class. Let $\xi_T$ be the contact structure obtained from $\xi$ by performing a Lutz twist on a rationally null-homologous transverse knot $T$ of order $D>1$. Then the difference of the $d_3$-invariants $$ d_3(\xi_T)-d_3(\xi)$$ is not an integer.
\end{lemma}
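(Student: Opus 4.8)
The plan is to reduce the statement to a parity computation. By Proposition~\ref{prop:lutz_twist_rationally_null_homologous} (in the form of Lemma~\ref{lem:d_3invariant}) the difference equals $d_3(\xi_T)-d_3(\xi)=-sl_{\mathbb{Q}}(T)$, so it suffices to prove that $sl_{\mathbb{Q}}(T)$ is not an integer. Since $[T]$ has order $D>1$, Lemma~\ref{lem:bakr_etnyre} produces a \emph{null-homologous} transverse knot $T'$ in $(M,\xi)$ with $sl_{\mathbb{Q}}(T)=sl(T')/D$; here $sl(T')\in\Z$ because $T'$ bounds a genuine Seifert surface. The goal is therefore to show $D\nmid sl(T')$.

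I would control the two numbers modulo $2$. For the denominator, the hypothesis that $H_1(M;\Z)$ contains only even torsion means it has no element of odd order, so by Cauchy's theorem it is a finite abelian $2$-group; hence the order $D$ of $[T]$ is a power of $2$, and since $D>1$ it is even. For the numerator, the crucial claim is that every null-homologous transverse knot in $(M,\xi)$ has \emph{odd} self-linking number, so that $sl(T')$ is odd. Granting this, $sl(T')/D$ is an odd integer over an even integer, which is never an integer, so $d_3(\xi_T)-d_3(\xi)=-sl(T')/D\notin\Z$, as desired.

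The main obstacle is the parity claim, which I would prove by descending to $(S^3,\xi_{std})$. Present $(M,\xi)$ by contact $(\pm1)$-surgery on a Legendrian link $\mathcal{L}=L_1\cup\cdots\cup L_n$ and, using Lemma~\ref{lem:knot_in_complement}, realise $T'$ as a transverse push-off of a Legendrian knot $L$ lying in the complement of $\mathcal{L}$, so that $sl(T')=tb_{\mathbb{Q}}(L)+rot_{\mathbb{Q}}(L)$. The classical fact in $S^3$ is that $t+r$ is odd for every Legendrian knot (equivalently, the self-linking number of a transverse knot in $(S^3,\xi_{std})$ is always odd, as seen from a transverse braid presentation), and the same holds for each surgery component, so that $t_i+r_i$ is odd and $p_i+r_i=(t_i\pm1)+r_i\equiv 0\pmod 2$. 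Since $[T']=0$ in $H_1(M)=\Z^n/Q\Z^n$, the homology vector $\alpha$ with $[L]=\sum_i\alpha_i\mu_i$ lies in the integral column span, say $\alpha=Q\beta$; feeding this into the Kegel--Durst formulas used in the proof of Lemma~\ref{lem:d_3invariant} gives $sl(T')\equiv t+r-\beta^\top Q\beta-\beta^\top r\equiv t+r-\sum_i(p_i+r_i)\beta_i\equiv t+r\equiv 1\pmod 2$. Thus $sl(T')$ is odd, completing the argument. The delicate point throughout is bookkeeping the $S^3$ parity through the surgery correction terms and verifying that they all vanish modulo $2$.
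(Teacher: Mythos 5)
Your proof is correct and takes essentially the same route as the paper: both reduce, via Proposition~\ref{prop:lutz_twist_rationally_null_homologous} and Lemma~\ref{lem:bakr_etnyre}, to the observation that the order $D$ is even (from the even-torsion hypothesis) while $sl(T')$ is odd, so $sl(T')/D \notin \Z$. The only difference is that the paper cites the odd parity of the self-linking number of a null-homologous transverse knot as a known result (referring to Geiges), whereas you re-derive it from the Kegel--Durst formulas together with the classical parity fact in $(S^3,\xi_{std})$; your computation there (in particular $\beta^\top Q\beta \equiv \sum_i p_i\beta_i$ and $p_i+r_i \equiv 0 \pmod 2$) is correct.
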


\begin{proof}
    By combining Proposition \ref{prop:lutz_twist_rationally_null_homologous} and Lemma \ref{lem:bakr_etnyre}, we get
    $$d_3(\xi)-d_3(\xi_T)= sl_{\mathbb{Q}}(T)=\frac{sl_{\mathbb{Q}}(T')}{D}.$$ 
    Since $T$ is rationally nullhomologous and $H_1(M;\mathbb{Z})$ contains only even torsion, the order of $T$ is even. Hence, it cannot divide $sl(T')$, since it is a well known result that the self-linking number for a given null-homologous Legendrian knot $L$ in a contact manifold $M$ is always an \emph{odd integer}, see for example~\cite{Geiges}.  
 \end{proof}

\begin{theorem}\label{thm:d_3determinesinvariants_precise}
    Let $M$ be a rational homology $3$-sphere with $H_1(M;\mathbb{Z})\cong \mathbb{Z}_2$. Then the $d_3$-invariant of every contact structure $\xi$ on $M$ completely determines the $\Gamma$-invariant. More precisely: given two contact structures $\xi_1$, $\xi_2$ on $M$ then the difference $d_3(\xi_1)-d_3(\xi_2)$ is an integer if and only if $\Gamma(\xi_1;\mathfrak{s})= \Gamma(\xi_2;\mathfrak{s})$ for every choice of spin structure $\mathfrak{s}$.
\end{theorem}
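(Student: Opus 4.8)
The plan is to fix a spin structure $\mathfrak s$ once and for all and show that the reduction $d_3(\xi)\bmod\mathbb Z$ is a function of the $\Gamma$-invariant $\Gamma(\xi;\mathfrak s)\in H_1(M;\mathbb Z)\cong\mathbb Z_2$ alone, and that the two possible values of this function differ by a half-integer. Granting this, both implications fall out at once: if $\Gamma(\xi_1;\mathfrak s)=\Gamma(\xi_2;\mathfrak s)$ then $d_3(\xi_1)$ and $d_3(\xi_2)$ lie in the same coset of $\mathbb Z$ and their difference is an integer, whereas if the $\Gamma$-invariants differ then the two $d_3$-invariants lie in cosets offset by $\tfrac12$, so their difference is a half-integer and in particular not an integer.

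First I would remove the dependence on the spin structure. As recalled in Section~\ref{sec:invariants}, for a fixed $\mathfrak s$ the assignment $\xi\mapsto\Gamma(\xi;\mathfrak s)$ records the induced spin$^c$ structure $\mathfrak t_\xi$ relative to the base point determined by $\mathfrak s$; consequently the difference $\Gamma(\xi_1;\mathfrak s)-\Gamma(\xi_2;\mathfrak s)$ equals the spin$^c$ difference $\mathfrak t_{\xi_1}-\mathfrak t_{\xi_2}\in H^2(M;\mathbb Z)$ and is therefore independent of $\mathfrak s$. Hence the equality $\Gamma(\xi_1;\mathfrak s)=\Gamma(\xi_2;\mathfrak s)$ holds for one spin structure if and only if it holds for every spin structure, and it suffices to work with a single fixed $\mathfrak s$. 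In particular, same $\Gamma$ (for this $\mathfrak s$) is the same as same $\mathfrak t_\xi$.

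Next I would establish the coset structure, which is the heart of the matter. Since $M$ is a rational homology sphere, every $2$-plane field has torsion Euler class, so $d_3$ and $\Gamma$ are defined. By the classification of tangential $2$-plane fields \cite{Gompf_Stein} recalled in the introduction, $\Gamma(\xi;\mathfrak s)$ encodes $\xi$ over the $2$-skeleton while $d_3$ records the remaining datum on the $3$-cell; thus two fields with the same $\Gamma(\cdot;\mathfrak s)$ agree over the $2$-skeleton and differ only by an element of $\pi_3(S^2)\cong\mathbb Z$, which changes $d_3$ by an integer. (Concretely, a Lutz twist along a null-homologous transverse knot realizes such a change: by Proposition~\ref{prop:lutz_twist_rationally_null_homologous} it fixes $\Gamma$ and shifts $d_3$ by an odd integer.) Consequently $d_3(\xi)\bmod\mathbb Z$ depends only on $\gamma:=\Gamma(\xi;\mathfrak s)$, and I write $c(\gamma)\in\mathbb Q/\mathbb Z$ for this value.

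Finally I would compute the offset $c(g)-c(0)$, where $g$ denotes the nontrivial class. Choose any $\xi$ and a rationally null-homologous transverse knot $T$ with $[T]=g$; such a $T$ exists because $g$ has order $2$ in $H_1(M;\mathbb Z)$ and every homology class is represented by a transverse knot. By Proposition~\ref{prop:lutz_twist_rationally_null_homologous} the Lutz twist $\xi_T$ satisfies $\Gamma(\xi_T;\mathfrak s)=\Gamma(\xi;\mathfrak s)-g$ and $d_3(\xi_T)-d_3(\xi)=-sl_{\mathbb Q}(T)$, so in particular both classes $0$ and $g$ are realized and $c(g)-c(0)\equiv -sl_{\mathbb Q}(T)\pmod{\mathbb Z}$. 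Lemma~\ref{lem:bakr_etnyre} gives $sl_{\mathbb Q}(T)=sl(T')/2$ for a null-homologous transverse knot $T'$, and since the self-linking number of a null-homologous transverse knot is odd (as used in Lemma~\ref{lem:not_dividing}) we get $sl_{\mathbb Q}(T)\in\tfrac12+\mathbb Z$. Thus $c(g)-c(0)\equiv\tfrac12\not\equiv 0$ in $\mathbb Q/\mathbb Z$, so $c$ is injective on $\{0,g\}$, and the equivalence $d_3(\xi_1)-d_3(\xi_2)\in\mathbb Z\iff c(\Gamma(\xi_1;\mathfrak s))=c(\Gamma(\xi_2;\mathfrak s))\iff\Gamma(\xi_1;\mathfrak s)=\Gamma(\xi_2;\mathfrak s)$ completes the proof. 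The main obstacle is the coset claim of the third paragraph: one must know that for a fixed $\Gamma$ the indeterminacy of $d_3$ is exactly integral, i.e.\ that $d_3\bmod\mathbb Z$ is a genuine spin$^c$ invariant; by contrast the offset computation is a direct application of the already established Proposition~\ref{prop:lutz_twist_rationally_null_homologous}, and the half-integrality of $sl_{\mathbb Q}$ for order-two classes is precisely the feature that makes the dichotomy clean when the torsion is $\mathbb Z_2$.
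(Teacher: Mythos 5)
Your proof is correct and takes essentially the same route as the paper's: both rest on the known fact that equal $\Gamma$-invariants force an integral $d_3$-difference, and then apply Proposition~\ref{prop:lutz_twist_rationally_null_homologous} together with Lemma~\ref{lem:bakr_etnyre} and the oddness of the self-linking number to show a Lutz twist along a knot representing the order-two class shifts $d_3$ by a half-integer. Your repackaging of this as a well-defined map $\Gamma(\xi;\mathfrak{s})\mapsto d_3(\xi)\bmod \mathbb{Z}$, and your inlining of the content of Lemma~\ref{lem:not_dividing}, are only cosmetic differences from the paper's argument.
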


   \begin{proof}
    We fix a choice of a spin structure $\mathfrak{s}$ on $M$ and express all $\Gamma$-invariants with respect to that spin structure. If $\Gamma(\xi_1;\mathfrak{s})=\Gamma(\xi_2;\mathfrak{s})$, then we know that $d_3(\xi_1)-d_3(\xi_2)\in \mathbb{Z}$, see for example~\cite{Ding_Geiges_Stipsciz_surgery_diagrams}. 

    Conversely, let us assume that $\Gamma(\xi_1;\mathfrak{s}) \neq \Gamma(\xi_2;\mathfrak{s})\in H_1(M;\Z)$. Then we choose a transverse knot $T$ in $(M,\xi_1)$ such that 
    \begin{equation*}
        [T]=\Gamma(\xi_1;\mathfrak{s})-\Gamma(\xi_2;\mathfrak{s})\neq0 \in H_1(M;\Z).
    \end{equation*}
    By Lemma~\ref{lem:not_dividing} we get that $d_3(\xi_1)-d_3(\xi_T)\notin \mathbb{Z}$. On the other hand, Proposition~\ref{prop:lutz_twist_rationally_null_homologous} implies that $\Gamma(\xi_2;\mathfrak{s})=\Gamma(\xi_T;\mathfrak{s})$ and thus $d_3(\xi_2)-d_3(\xi_T)\in \mathbb{Z}$. Thus $d_3(\xi_1)-d_3(\xi_2)$ cannot be integral.
   \end{proof}

Next, we show that on any manifold with homology not isomorphic to $0$ or $\Z_2$ the $d_3$-invariant does not determine the $\Gamma$-invariant.
  
\begin{theorem}\label{thm:prototypeforgammaandd3}
    Let $M$ be a rational homology $3$-sphere with $H_1(M,\Z)$ not isomorphic to $0$ or $\Z_2$. Then for every choice of spin structure $\mathfrak{s}$ on $M$, there exists two tangential $2$-plane fields $\xi_1$ and $\xi_2$ on such that $\Gamma(\xi_1;\mathfrak{s})\neq \Gamma(\xi_2;\mathfrak{s})$ but $d_3(\xi_1)=d_3(\xi_2)$. 
\end{theorem}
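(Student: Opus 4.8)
The plan is to realize both plane fields as Lutz twists of a single auxiliary contact structure and to read off the change of invariants from Proposition~\ref{prop:lutz_twist_rationally_null_homologous}. I would fix any contact structure $\xi$ on $M$; since $M$ is a rational homology sphere, $H^2(M;\Z)\cong H_1(M;\Z)$ is finite, so $e(\xi)$ is automatically torsion, every knot in $M$ is rationally null-homologous, and the proposition applies to every transverse knot. The whole problem then reduces to producing two homology classes $a\neq b\in G:=H_1(M;\Z)$ with transverse representatives $T_a,T_b$ whose rational self-linking numbers agree modulo $\Z$. Indeed, Lutz twisting $\xi$ along $T_a$ and along $T_b$ yields $\xi_1,\xi_2$ with $\Gamma(\xi_i;\mathfrak{s})=\Gamma(\xi;\mathfrak{s})-[T_i]$ for \emph{every} spin structure $\mathfrak{s}$, so that $\Gamma(\xi_1;\mathfrak{s})-\Gamma(\xi_2;\mathfrak{s})=b-a\neq 0$ independently of $\mathfrak{s}$, while the $d_3$-invariants differ by the integer $sl_{\mathbb{Q}}(T_b)-sl_{\mathbb{Q}}(T_a)$. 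The fact that the $\Gamma$-difference does not depend on $\mathfrak{s}$ is precisely what makes the conclusion hold for every spin structure at once.

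The choice of $a,b$ splits according to the exponent of $G$. If $G$ has an element $a$ with $2a\neq 0$ (that is, $G$ has exponent $>2$), I would take $T_a=T$ any transverse knot representing $a$ and $T_b=-T$, its orientation reversal, which represents $-a$. Reversing the orientation of $T$ reverses both the knot and its contact push-off, so the linking number computing the rational self-linking number is unchanged, $sl_{\mathbb{Q}}(-T)=sl_{\mathbb{Q}}(T)$; hence $d_3(\xi_1)=d_3(\xi_2)$ on the nose, while $\Gamma(\xi_1;\mathfrak{s})-\Gamma(\xi_2;\mathfrak{s})=-2a\neq0$. This single observation settles every rational homology sphere whose first homology is not of exponent two.

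The remaining case is $G\cong\Z_2^{\,k}$ with $k\ge 2$, the cases $G=0$ and $G=\Z_2$ being excluded. Here I would pick any two distinct nonzero classes $a\neq b$ with transverse representatives $T_a,T_b$. Each nonzero class has order $2$, so Lemma~\ref{lem:bakr_etnyre} gives $sl_{\mathbb{Q}}(T_a)=sl(T'_a)/2$ with $sl(T'_a)$ odd, and likewise for $b$; thus $sl_{\mathbb{Q}}(T_a),sl_{\mathbb{Q}}(T_b)\in\tfrac12+\Z$, their difference is an integer $m$, and $d_3(\xi_1)-d_3(\xi_2)=m$ while the $\Gamma$-invariants still differ by $b-a\neq 0$. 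To upgrade this congruence to an equality I would adjust one plane field over its $3$-cell: for fixed $2$-dimensional obstruction (encoded by $\Gamma$), the $d_3$-invariant realizes every value in a fixed coset of $\Z$ (Gompf~\cite{Gompf_Stein}). Concretely one can absorb $m$ by further Lutz twists along null-homologous transverse knots $T_0$, each of which leaves $\Gamma(\cdot;\mathfrak{s})$ unchanged for all $\mathfrak{s}$ (since $[T_0]=0$) and increases $d_3$ by the positive odd integer $-sl(T_0)$; applying enough such twists to whichever of $\xi_1,\xi_2$ has the smaller $d_3$ equalizes the two values and produces the desired pair on $M$.

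The conceptual heart is the exponent-two dichotomy, and the step I expect to be the main obstacle is the first case: one must be certain that orientation reversal of a transverse knot genuinely preserves the rational self-linking number while negating the homology class, so that the two Lutz twists land on different $2$-skeleton data but identical $3$-cell data. The case $G\cong\Z_2^{\,k}$ is softer; its only mild subtlety is that $sl_{\mathbb{Q}}$ of an order-two class is pinned to $\tfrac12+\Z$ rather than to an integer, which forces the extra (harmless) integer adjustment. I anticipate no difficulty in the existence of transverse representatives with prescribed homology class and suitably negative self-linking, since any class is carried by an embedded curve that can be made transverse and then stabilized.
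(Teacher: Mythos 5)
Your second case ($H_1(M;\Z)\cong\Z_2^{\,k}$, $k\geq 2$) is correct and is essentially the paper's own argument for that case. The genuine gap is in your first case, which is the main one (it is supposed to cover every group of exponent $>2$). The orientation reversal $-T$ of a positively transverse knot is \emph{negatively} transverse, so it is not an admissible input for Proposition~\ref{prop:lutz_twist_rationally_null_homologous}: a Lutz twist only sees the underlying unoriented curve, and the class $[T]$ in the formula $\Gamma(\xi_T,\mathfrak{s})-\Gamma(\xi,\mathfrak{s})=-[T]$ is taken with the positive-transverse orientation (this is exactly how $T$ is oriented in the paper's proof, via transverse push-offs of a Legendrian $L$). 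Hence ``Lutz twisting along $-T$'' is the same operation as Lutz twisting along $T$ and produces the same contact structure; your $\xi_1$ and $\xi_2$ coincide, so their $\Gamma$-invariants are equal, not different. If instead you pass to an honest positively transverse representative of $-a$ --- the natural one being the positive transverse push-off of $-L$ when $T$ is the positive push-off of $L$ --- then its self-linking number is $tb_{\mathbb{Q}}(L)+rot_{\mathbb{Q}}(L)$ while $sl_{\mathbb{Q}}(T)=tb_{\mathbb{Q}}(L)-rot_{\mathbb{Q}}(L)$; the two differ by $2\,rot_{\mathbb{Q}}(L)$, which in general is not even an integer. The symmetry $sl_{\mathbb{Q}}(-T)=sl_{\mathbb{Q}}(T)$ with $[-T]=-[T]$ that your argument rests on is false; it is the transverse analogue of the fact that $rot$ changes sign under orientation reversal.

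This failure is not a removable technicality, because you fixed $\xi$ \emph{arbitrarily} at the start. Concretely, let $M=L(5,1)$ with $\xi$ given by contact $(-1)$-surgery on a Legendrian unknot with $t=-4$, $r=1$. By the rational self-linking formula used in the paper's proof of this theorem, a transverse knot $T$ in the class $l\mu$ satisfies $sl_{\mathbb{Q}}(T)=sl(\widetilde{T})+\tfrac{l(l+1)}{5}$ with $sl(\widetilde{T})\in\Z$; since $l(l+1)\equiv 2 \pmod 5$ when $l\equiv 1$, and $l(l+1)\equiv 0\pmod 5$ when $l\equiv -1$, every transverse representative of $\mu$ has $sl_{\mathbb{Q}}\in\tfrac{2}{5}+\Z$ while every representative of $-\mu$ has $sl_{\mathbb{Q}}\in\Z$. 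So for this $\xi$ no choice of representatives of $a=\mu$ and $-a$ makes the two Lutz-twisted $d_3$-invariants differ by an integer, and further Lutz twists along null-homologous knots (which shift $d_3$ by integers) cannot repair this. This is precisely why the paper does not fix $\xi$ in advance: it reduces to $\#_i L(p_i,1)$ via Lemma~\ref{lem:homologydependenceonly} and then chooses the contact structure (the rotation number $r_j$ of the surgery unknot) \emph{together with} the homology class of $T$ (the linking number $l_j$) so that $p_j\mid(l_j+r_j)$ while $p_j\nmid l_j$. The freedom to choose $\xi$, and not only the knots, is the ingredient your argument is missing.
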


\begin{proof}
We fix a spin structure $\mathfrak{s}$ on $M$ and express all $\Gamma$-invariants with respect to this spin structure. By the classification of finitely presented abelian groups we know that
\begin{equation*}
    H_1(M;\mathbb{Z})\cong \bigoplus_{i=1}^k\mathbb{Z}_{p_i}.
\end{equation*}
Note that $M$ has the same homology as $\#_{i=1}^k L({p_i},1)$ and by Lemma~\ref{lem:homologydependenceonly} it is enough to prove the theorem in the case that $M=\#_{i=1}^k L({p_i},1)$.

 In this case $M$ can be obtained by performing $-p_i$ smooth surgeries along disjoint unknots $O_i$ in $S^3$. Let $U_i$ denote a Legendrian realization of $O_i$ with $tb(U_i)=t_i$ and $rot(U_i)=r_i$. The corresponding contact $\pm1$ surgery diagram can be obtained by considering $U_i$ such that $t_i=-p_i\pm 1$. Then the linking matrix is given by 
        \begin{align*}
            \begin{bmatrix}
                -p_1&0&\hdots&0\\
                0&-p_2&\hdots&0\\
                \vdots&\vdots&\vdots&\vdots\\
                0&0&0&-p_k
            \end{bmatrix}
        \end{align*} 
        Let $\xi$ be the contact structure on $M$ obtained by this contact $(\pm 1)$-surgery description. Let $\widetilde{T}$ be a transverse knot in $(S^3,\xi_{std})$ disjoint from the above surgery description and such that for a fixed $j$,  $lk(\widetilde{T},U_i)=0$ if $i\neq j$ and $lk(\widetilde{T},U_i)=l_j$ if $i=j$. Let $l:=(0,\hdots, l_j, 0,\hdots,0)$. Moreover $\widetilde T$ represents a transverse knot $T$ in $(M,\xi)$. Let $(M,\xi_T)$ be the overtwisted contact manifold obtained by Lutz twisting along $T$ and let $D$ be the order of $[T]$ in $M$ with $[T]=l\in H_1(M;\mathbb{Z})$. We have $D=\frac{p_j}{gcd(p_j,l_j)}$ and  $c=(0, \hdots,0,-\frac{l_j}{gcd(p_j,l_j)},0,\hdots, 0)$ provides a solution for $Qc=Dl$. Therefore using the formula from \cite{Kegel_thesis}, we have
        \begin{align*}
        sl_{\mathbb{Q}}(T)=sl(\Tilde{T})+\frac{\frac{l_j}{gcd(p_j,l_j)}}{\frac{p_j}{gcd(p_j,l_j)}}(l_j+r_j)
        =sl(\Tilde{T})+\frac{l_j}{p_j}(l_j+r_j).
        \end{align*}
        By Proposition~\ref{prop:lutz_twist_rationally_null_homologous}, the difference
        \begin{align*}
            d_3(\xi)-d_3(\xi_T)=sl(\Tilde{T})+\frac{l_j}{p_j}(l_j+r_j)
        \end{align*}
        is an integer if and only if $p_j$ divides $l_j(l_j+r_j)$. 
        
        We know that either $p_j=2$ for all $j$ or there exists a $j$ such that $p_j>2$ since $H_1(M;\mathbb{Z})\ncong \mathbb{Z}_2$, $0$. First, we assume $p_j> 2$ and our goal is to make a choice for $T$ and adjust the rotation number $r_j$ of $U_j$ fixing $t_j$ such that $\Gamma(\xi)-\Gamma(\xi_T)=[T]\neq 0$ and $p_j$ divides $(l_j+r_j)$. Note that $[T]\neq 0$ is equivalent to the condition that $p_j$ does not divides $l_j$. 

        First, we consider the case that $p_j\geq 3$ is odd. The rotation number of $U_j$ belongs to the set $\{\pm1,\hdots, \pm (p_j-2)\}$ and $\{\pm 1, \hdots, \pm p_j\}$ for $t_j=-p_j+1$ and $t_j=-p_j-1$, respectively. Now we choose $\widetilde{T}$ such that $l_j\in \{0,2,\ldots, p_j-1\}$ for $t_j=-p_j-1$ and $l_j\in \{2,4,\ldots, p_j-1\}$ for $t_j=-p_j+1$. Then there is also a Legendrian realization of $U_j$ with $r_j=p_j-l_j$ for such a choice of $\widetilde{T}$ and its linking number $l_j$ with $U_j$. Then $p_j$ does not divide $l_j$ but $p_j$ divides $(l_j+r_j)$. 

        Secondly, we consider the case that $p_j\geq 3$ is even. Then for $t_j=-p_j+1$ the rotation number of $U_j$ can take values in the set $\{0,\pm 2, \hdots, \pm (p_j-2)\}$ and for $t_j=-p_j-1$, the rotation number takes values in the set $\{0,\pm 2, \hdots, \pm p_j\}$. In both the cases we can make choice for $U_j$ such that $r_j\neq 0, \pm p_j$ and a choice for $\widetilde{T}$ such that $l_j=p_j-r_j$. Then $p_j$ does not divide $l_j$ but $p_j$ divides $(l_j+r_j)$. 
        
        In both cases, we have that $\Gamma(\xi)-\Gamma(\xi_T)=[T]\neq 0$ and $d_3(\xi_T)-d_3(\xi)$ is an integer. By changing the self-linking number of $\widetilde T$ in $(S^3,\xi_{st})$ by performing connected sums with appropriate transverse knots, we can assume $d_3(\xi_T)=d_3(\xi)$.
    
     We are left with the case, where $p_i=2$ for all $i =1,\ldots k\geq2$. In that case we choose a contact structure $\xi$ on $M$ and consider two transverse knots $T_1$ and $T_2$ in $(M,\xi)$ such that $[T]=(1,0,\hdots,0)$ and $[T_2]=(0,1,0,\hdots,0)$. We know that $\Gamma(\xi_{T_1})-\Gamma(\xi_{T_2})=[T_2]-[T_1]\neq 0$. By Lemma \ref{lem:bakr_etnyre}, there exists nullhomologous transverse knots $T'_1$ and $T'_2$ in $(M,\xi)$ such that $sl_{\mathbb{Q}}(T_i)=\frac{sl(T_i)}{2}, \: i=1,2$. Thus by proposition~\ref{prop:lutz_twist_rationally_null_homologous} we have
        \begin{align*}
            d_3(\xi_{T_1})-d_3(\xi_{T_2})=&sl_\mathbb{Q}(T_2)-sl_{\mathbb{Q}}(T_1)
        =\frac{sl(T'_2)-sl(T'_1)}{2}\in \mathbb{Z}
        \end{align*}
since the self-linking number is always an odd integer. Again by adjusting the self-linking number of, say $T_1$, appropriately we can arrange that $d_3(\xi_{T_1})=d_3(\xi_{T_2})$.
\end{proof}

  \begin{proof}[Proof of Theorem \ref{thm:d_3determinesinvariants}]
 If $H_1(M;\mathbb{Z})=0$ then the $\Gamma$-invariant is always vanishing and thus determined by $d_3$. If $H_1(M;\mathbb{Z})$ is isomorphic to $\Z_2$ then $\Gamma$ is determined by $d_3$ as shown in Theorem \ref{thm:d_3determinesinvariants_precise}. Finally, in Theorem~\ref{thm:prototypeforgammaandd3} we have shown that $\Gamma$ is not determined by $d_3$ on all other rational homology $3$-spheres.
\end{proof}
   
\section{Upper bounds on contact surgery distances}\label{sc:surgery_distance}

In this section, we establish our results for bounds on the contact surgery distance, i.e.\ we prove Theorem~\ref{thm:contact_surgery_distance} and its generalisations. We start by giving the precise definitions of the various surgery distances.
The following definition can be found, for example, in \cite{Ichihara-Saito} and generalises the surgery number of \cite{Auckly}.

\begin{definition}\label{def:surgery_distance}
    Let $M$ and $N$ be closed, connected, oriented $3$-manifolds. Then the \textit{surgery distance} of $M$ and $N$, denoted as $s(M,N)$, is defined as the minimum number of rational Dehn surgeries required to obtain $N$ from $M$. The \textit{integeral surgery distance} $S_\Z(M,N)$ of $M$ and $N$ is defined as the minimum number of integral Dehn surgeries required to obtain $N$ from $M$.
\end{definition}

\begin{remark} We add a few remarks about these definitions.
\begin{enumerate}
    \item First we remark that the surgeries are performed along knots in general $3$-mani\-folds, and thus will in general not be null-homologous. Thus, there is no preferred longitude to measure the surgery coefficients. However, the integrality of a surgery coefficient is still well-defined since this is independent of the choice of the longitude. Indeed, let $K$ be a knot in a $3$-manifold $M$ with meridian $\mu$ and two longitudes $\lambda$ and $\lambda'$. Then these two longitudes are related by $\lambda=n\mu+\lambda'$, for some integer $n\in\Z$. A surgery coefficient $p/q$ measured with respect to the longitude $\lambda$ can be expressed with respect to the longitude $\lambda'$ as follows
    \begin{align*}
        p/q\equiv p\mu+q\lambda= (p+n q)\mu +q\lambda' \equiv p/q + n 
    \end{align*}
    and thus the integrality of a surgery coefficient is independent of the choice of the longitude.
    \item It is not hard to check that if $N$ is obtained by a single Dehn surgery along a knot $K$ in $M$, then also $M$ can be obtained by a single Dehn surgery along the dual knot $K^*$ from $N$. Moreover, if the surgery along $K$ was integral, then the surgery along $K^*$ is also integral. Thus, it follows from the Likorish--Wallace theorem that both versions of the surgery distance are well-defined and are symmetric. 
\end{enumerate}
\end{remark}

Next, we define the analogous notions for contact surgery. Recall that a Legendrian knot $K$ in a contact $3$-manifold $(M,\xi)$ admits a preferred framing coming from the contact structure, and we always measure surgery coefficients with respect to this framing.

\begin{definition}\label{def:contact_surgery_distance}
    The \textit{contact $(\pm1)$-surgery distance}, $cs_{\pm1}((M,\xi),(N,\xi'))$ between $(M,\xi)$ and $(N,\xi')$ is defined to be the minimum number of contact $(\pm1)$-surgeries that are required to obtain $(N,\xi')$ from $(M,\xi)$.
    
    The \textit{contact surgery distance}, $cs((M,\xi),(N,\xi'))$ from $(M,\xi)$ to $(N,\xi')$ is defined to be the minimum number of rational contact surgeries (with non-vanishing surgery coefficient) that are required to obtain $(N,\xi')$ from $(M,\xi)$.

    Here we say that $(M_k,\xi_k)$ is obtained by $k$ contact surgeries from $(M_0,\xi_0)$ if there exists a sequence of contact manifolds $(M_i,\xi_i)$, where $(M_i,\xi_i)$ is obtained from $(M_{i-1},\xi_{i-1})$ by a single contact surgery along a Legendrian knot in $(M_{i-1},\xi_{i-1})$ for all $i=1,\ldots, k$.
\end{definition}

\begin{remark}
    We add a few remarks about these definitions.
    \begin{enumerate}
        \item First, we remark that one could define other interesting notions of contact surgery distances by restricting the contact surgery coefficients to be integers or reciprocals of integers. But in this article, we consider only the above two notions.
        \item The different notions of contact surgery distances are well-defined by the Ding--Geiges theorem, and since the cancellation lemma implies that a contact $(\pm1)$-surgery can be reversed by a contact $(\mp1)$-surgery.
        \item By Lemma~\ref{lem:knot_in_complement} every Legendrian knot in a contact manifold can be presented in any contact $(\pm1)$-surgery diagram of that contact manifold, it follows that Definition~\ref{def:contact_surgery_distance} is equivalent to the one given in the introduction, i.e.\ $cs_{\pm1}((M,\xi),(N,\xi'))$ is equal to the minimal number of components of a Legendrian link $L$ in $(M,xi)$ such that $(N,\xi')$ arises by contact $(\pm1)$-surgery along $L$. On the other hand, one cannot reverse a contact surgery with a general contact surgery coefficient by a single contact surgery, see for example Example 4.7.2 in~\cite{Kegel_thesis}. So the above alternative description remains unclear for the general contact surgery distance.
        \item Similarly, the cancellation lemma implies that the contact $(\pm1)$-surgery distance is a metric. But this is not true for the general contact surgery distance as the following result shows.
    \end{enumerate}
\end{remark}

\begin{proposition}
Any contact structure on $S^3$ with rational contact surgery number $cs=1$ that is not isotopic to $\xi_1$, the unique overtwisted contact structure on $S^3$ with $d_3=1$, has contact surgery distance to $(S^3,\xi_{std})$ bigger than one. In particular, the contact surgery distance is not symmetric.
\end{proposition}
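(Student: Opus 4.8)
The plan is to isolate $\xi_1$ as the \emph{unique} overtwisted structure lying at contact surgery distance $1$ from $(S^3,\xi_{std})$, and to show that every other contact structure with $cs=1$ is produced from $\xi_{std}$ only by a surgery whose coefficient is different from $\pm 1$; such a surgery cannot be undone by a single contact surgery, which forces the distance back to $\xi_{std}$ to exceed $1$. First I would record why $\xi_1$ itself must be excluded. It is obtained from $(S^3,\xi_{std})$ by a single contact $(+1)$-surgery on the Legendrian unknot with $tb=-2$; a direct computation with Lemma~\ref{lem:d3} identifies the result as the overtwisted structure with $d_3=1$ (independently of the rotation number $\pm1$), i.e.\ $\xi_1$. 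By the Cancellation Lemma~\ref{lem:cancelation}, contact $(-1)$-surgery on the dual knot returns $(S^3,\xi_{std})$, so $cs\big((S^3,\xi_1),(S^3,\xi_{std})\big)=1$. Thus $\xi_1$ genuinely realizes distance $1$ to $\xi_{std}$ and must be excluded.

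Next I would establish the classification input that makes the reduction work: the only contact structure on $S^3$ realizable by a single contact $(\pm1)$-surgery from $(S^3,\xi_{std})$ is $\xi_1$. Indeed, the underlying topological surgery produces $S^3$ from a knot in $S^3$, so by the knot complement theorem (Gordon--Luecke) the knot is the unknot with topological coefficient $1/k$; since a contact $(\pm1)$-surgery has integral topological coefficient $tb\pm1$, this forces the coefficient $\pm1$, and the Thurston--Bennequin bound $tb\le -1$ for the unknot in the tight sphere then leaves only contact $(+1)$-surgery on the $tb=-2$ unknot, namely $\xi_1$. Consequently, if $\xi$ has $cs(\xi)=1$ and $\xi\not\cong\xi_1$, then $\xi$ is overtwisted (it is not $\xi_{std}$, the unique structure with $cs=0$, hence not tight), and \emph{every} single rational contact surgery realizing $(S^3,\xi_{std})\to(S^3,\xi)$ must have coefficient $r\neq\pm1$.

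The crux is the final step. Assume for contradiction that $cs\big((S^3,\xi),(S^3,\xi_{std})\big)=1$, so there is also a single rational contact surgery $(S^3,\xi)\to(S^3,\xi_{std})$. Together with the surgery $(S^3,\xi_{std})\to(S^3,\xi)$ of coefficient $r\neq\pm1$ from the previous step, this would exhibit the pair $(S^3,\xi_{std}),(S^3,\xi)$ as connected by single rational contact surgeries in \emph{both} directions across a non-$(\pm1)$ coefficient, contradicting the non-reversibility of rational contact surgeries discussed after Definition~\ref{def:contact_surgery_distance} (the Cancellation Lemma~\ref{lem:cancelation} shows $\pm1$ are the only coefficients whose surgery is inverted by a single contact surgery). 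Hence $cs\big((S^3,\xi),(S^3,\xi_{std})\big)>1$, while $cs\big((S^3,\xi_{std}),(S^3,\xi)\big)=cs(\xi)=1$, which yields the asserted asymmetry.

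I expect the main obstacle to be exactly this last step, namely using non-reversibility in the strong form that \emph{no} single contact surgery on \emph{any} Legendrian knot recovers $(S^3,\xi_{std})$ from $(S^3,\xi)$, rather than merely that the dual of one given surgery fails. The rigorous justification I would give proceeds through the $d_3$-invariant: a single surgery $(S^3,\xi)\to(S^3,\xi_{std})$ is along a Legendrian unknot (again Gordon--Luecke), so passing to its dual presents $\xi$ from $\xi_{std}$ by a positive contact surgery on an unknot; computing $d_3$ of such a surgery with Lemma~\ref{lem:d3} and comparing with $d_3(S^3,\xi_{std})=0$ under the classification of tangential $2$-plane fields by $(d_3,\Gamma)$ should show that the two presentations of $\xi$ are compatible only when the coefficient is $\pm1$, i.e.\ only for $\xi\cong\xi_1$. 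Carrying out this comparison, including the non-uniqueness of rational contact surgery through the continued-fraction choices, is the technical heart of the argument.
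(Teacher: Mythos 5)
Your argument is correct up to its final step, but that final step --- exactly the one you flag as the ``crux'' --- contains a genuine gap, and the proposed $d_3$ repair does not close it. The contradiction you invoke (``connected by single surgeries in both directions across a non-$(\pm1)$ coefficient contradicts non-reversibility'') rests on a principle that is established nowhere: the Cancellation Lemma~\ref{lem:cancelation} says that $(\pm1)$-surgeries \emph{can} be inverted by a single contact surgery on the dual knot; it does not say that surgeries with other coefficients cannot be, and the paper's remark after Definition~\ref{def:contact_surgery_distance} only records that reversal via the dual knot can \emph{fail} in a specific example from~\cite{Kegel_thesis}. More fundamentally, to show $cs\big((S^3,\xi),(S^3,\xi_{std})\big)>1$ you must rule out a single contact surgery along \emph{every} Legendrian knot in $(S^3,\xi)$, and such a surgery need not be ``the inverse'' of any surgery presenting $\xi$ from $\xi_{std}$; so even a strong irreversibility statement for the surgeries $\xi_{std}\to\xi$ would not yield your contradiction. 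The $d_3$ fallback cannot supply what is missing either: on $S^3$ the invariant $d_3$ only records the homotopy class of the plane field, infinitely many distinct $d_3$-values are realized by single rational contact surgeries on Legendrian unknots~\cite{KegelEtnyreOnaran}, and nothing in a $d_3$ computation can distinguish those overtwisted $\xi$ admitting a surgery back to $\xi_{std}$ from those that do not; moreover, the dual knot of a general rational contact surgery need not present the reverse direction as a contact surgery at all, which is precisely the non-reversibility phenomenon you are trying to exploit.

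The missing idea is contact-geometric rather than homotopy-theoretic, and it makes the argument short. Suppose $(S^3,\xi_{std})$ is obtained by a single rational contact surgery along a Legendrian knot $K$ in $(S^3,\xi)$. By Gordon--Luecke~\cite{Gordon_Luecke}, $K$ is a topological unknot (you have this part). The key point you are missing: the exterior of $K$ is untouched by the surgery, so it embeds with its contact structure into the tight $(S^3,\xi_{std})$ and is therefore itself tight; hence $K$ is a \emph{non-loose} Legendrian unknot in the overtwisted $(S^3,\xi)$. The Eliashberg--Fraser classification of Legendrian unknots~\cite{Eliashberg_Fraser} shows that non-loose Legendrian unknots exist only in $\xi_1$, forcing $\xi\cong\xi_1$. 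This single observation replaces your entire last step (and makes your preliminary classification of $(\pm1)$-surgeries out of $\xi_{std}$ unnecessary); the asymmetry then follows since there are infinitely many contact structures on $S^3$ with $cs=1$~\cite{KegelEtnyreOnaran}.
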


\begin{proof}
    Let $\xi'$ be an overtwisted contact structure on $S^3$ with contact surgery number one. If $(S^3,\xi_{std})$ is obtained from $(S^3,\xi')$ by a single contact surgery along a Legendrian knot $K$ in $(S^3,\xi')$, then $K$ has to be an unknot by the resolution of the knot complement problem~\cite{Gordon_Luecke}. Moreover, since $(S^3,\xi_{std})$ is tight the exterior of $K$ has to be tight and thus $K$ is a non-loose Legendrian unknot in $(S^3,\xi')$. But the classification of non-loose Legendrian unknots in contact structures on $S^3$~\cite{Eliashberg_Fraser} implies that $\xi'$ is isotopic to $\xi_1$.

    By \cite{KegelEtnyreOnaran}, there exist infinitely many contact structures on $S^3$ with contact surgery number one, and thus the contact surgery distance is not symmetric.
\end{proof}

Now we are ready to state and prove the generalization of Theorem \ref{thm:contact_surgery_distance}.
    
\begin{theorem}\label{thm:maingeneral}
Let $(M,\xi)$ and $(N,\xi')$ be two contact, oriented contact $3$-manifolds. Then 
\begin{align*}
    cs_{\pm1}((M,\xi),(N,\xi'))\leq s_{\mathbb{Z}}(M,N)+5\\
    cs((M,\xi),(N,\xi'))\leq s(M,N)+5.
\end{align*}
\end{theorem}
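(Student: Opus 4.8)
The plan is to reduce both inequalities to a single principle: the contact surgery distance exceeds the topological surgery distance by a bounded amount because one can always realize an optimal smooth surgery sequence by contact surgeries at the cost of fixing up the homotopical invariants of the resulting $2$-plane field, and this fix-up requires a uniformly bounded number of extra contact surgeries. First I would take an optimal sequence of $s_{\Z}(M,N)$ integral smooth Dehn surgeries (respectively $s(M,N)$ rational Dehn surgeries) realizing $N$ from $M$. The key observation is that each smooth surgery along a knot $K$ can be promoted to a contact surgery: after making $K$ Legendrian (using Lemma~\ref{lem:knot_in_complement} to realize any knot in a contact surgery diagram and adjusting the framing by stabilizations), an integral smooth surgery corresponds to a contact surgery whose coefficient is determined by the Thurston--Bennequin framing. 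Performing these contact surgeries one at a time yields some contact structure $\xi''$ on the smooth manifold $N$, which agrees with $\xi'$ smoothly but need not agree as a $2$-plane field.

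The heart of the argument is then correcting $\xi''$ to $\xi'$ using a bounded number of additional contact surgeries, and this is where Proposition~\ref{prop:lutz_twist_rationally_null_homologous} enters. Since both $\xi''$ and $\xi'$ live on the same manifold $N$, it suffices to match their homotopical data. The strategy is to perform Lutz twists: a Lutz twist along a transverse knot $T$ does not change the underlying smooth manifold but shifts the $\Gamma$-invariant by $-[T]\in H_1(N;\Z)$ and, in the rational homology sphere case, the $d_3$-invariant by $-sl_{\Q}(T)$. By choosing transverse knots in suitable homology classes I can first correct the $\Gamma$-invariant (equivalently the Euler class and spin refinement) and then, by choosing the self-linking number appropriately via connected sums with transverse knots in $(S^3,\xi_{std})$, correct the $d_3$-invariant. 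Each Lutz twist is implementable by a bounded number of contact $(\pm1)$-surgeries (a contact $(+1)$-surgery on the Legendrian knot together with one on a doubly-stabilized push-off, as in the constructions of Lemmas~\ref{lem:effect_of_contact surgery} and~\ref{lem:d_3invariant}), so the total correction cost is a universal constant, and the claim is that this constant is $5$.

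The hard part will be establishing that the correction genuinely requires at most $5$ extra contact surgeries rather than merely a bounded number, and in particular handling the interplay between the $\Gamma$-invariant correction and the $d_3$-invariant correction without them interfering. One must ensure that adjusting $[T]$ to fix $\Gamma$ does not uncontrollably disturb $d_3$, and that the subsequent $d_3$-correction (which changes the self-linking number of the transverse knot being Lutz-twisted) can be done while preserving the already-corrected $\Gamma$-invariant; the formulas in Proposition~\ref{prop:lutz_twist_rationally_null_homologous} are precisely what allow these two adjustments to be decoupled, since the $\Gamma$-shift depends only on the homology class $[T]$ while the $d_3$-shift can be tuned independently via $sl_{\Q}(T)$. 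A further subtlety is that when $M$ (or $N$) is not a rational homology sphere the $d_3$-invariant need not be defined, so the relevant comparison must be carried out on the $2$-skeleton via the $\Gamma$-invariant and the relative obstruction classes; I would invoke Lemma~\ref{lem:homologydependenceonly} to reduce the classification of the achievable invariants to the homology of $N$. Finally, for the second inequality involving the general (non-symmetric) contact surgery distance $cs$ and the rational surgery distance $s$, the same scheme applies verbatim since rational contact surgeries subsume the integral ones and the Lutz-twist corrections are unaffected; the only care needed is that rational contact surgeries cannot be freely reversed, but since the correction only ever \emph{adds} surgeries in the forward direction this causes no difficulty.
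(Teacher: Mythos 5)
Your overall skeleton (realize the optimal smooth surgery sequence by contact surgeries, then repair the homotopical invariants by Lutz twists implemented as contact $(\pm1)$-surgeries) is the same as the paper's, but two essential steps are missing, and without them the argument fails. First, the framing-realization step is wrong as stated: stabilizations only \emph{decrease} the Thurston--Bennequin invariant, so in a tight $(M,\xi)$ the smooth framings realizable by contact $(\pm1)$-surgery on Legendrian representatives of a knot $K$ are bounded above by Bennequin-type inequalities; an integral surgery coefficient larger than the maximal $tb$ plus one simply cannot be promoted to a contact $(\pm1)$-surgery by stabilizing. This is precisely why the paper spends its first surgery making $(M,\xi)$ overtwisted: a \emph{loose} Legendrian realization of the surgery link has an overtwisted disk in its complement, and connected sums with the boundary of that disk allow the contact framing to be \emph{increased} arbitrarily, so every framing becomes realizable. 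That single overtwisting surgery is also where the fifth surgery in $s_{\Z}(M,N)+5$ comes from; your accounting ($2$ surgeries for a $\Gamma$-correcting Lutz twist plus $2$ for a $d_3$-correction) would give $+4$, and, as you yourself note, you never actually close the count.

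Second, matching homotopical invariants does not suffice to identify contact structures: two plane fields that are homotopic underlie isotopic contact structures only when both structures are \emph{overtwisted} (Eliashberg). Your intermediate $\xi''$ need not be overtwisted and, worse, the target $\xi'$ may be tight, while every Lutz twist produces an overtwisted structure -- so no sequence of Lutz twists performed on $\xi''$ can ever terminate at a tight $\xi'$. The paper resolves this with a direction-reversal trick absent from your proposal: it Lutz twists the \emph{target} $\xi'$ along a transverse knot $T$ with $[T]=\Gamma(\xi',\mathfrak{s})-\Gamma(\xi'_{ot},\mathfrak{s})$, producing an overtwisted $\xi'_T$ whose $\Gamma$-invariant agrees with that of the overtwisted $\xi'_{ot}$ obtained from the smooth sequence; then $\xi'_{ot}$ and $\xi'_T$, being overtwisted with equal $\Gamma$, differ by connected sum with an overtwisted contact structure on $S^3$, reachable in at most two contact $(\pm1)$-surgeries; and finally the Lutz twist is \emph{undone} by two contact $(-1)$-surgeries via the Cancellation Lemma~\ref{lem:cancelation}, landing on $\xi'$ even when it is tight. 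Note also that the paper never needs the $d_3$-part of Proposition~\ref{prop:lutz_twist_rationally_null_homologous} here (it is not even defined when $N$ fails to be a rational homology sphere, a case your appeal to Lemma~\ref{lem:homologydependenceonly} does not repair): the entire top-cell discrepancy is absorbed by the connected sum with an overtwisted $S^3$, which is why only the $\Gamma$-formula of Lemma~\ref{lem:effect_of_contact surgery} enters the proof.
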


\begin{proof}
Let $\mathcal{K}$ be a smooth link in $M$ with $s(M,N)$-com\-po\-nents (resp. $s_{\mathbb{Z}}(M,N)$-compo\-nents) such that smooth surgery (resp. integer surgery) along $\mathcal{K}$ yields $N$. 

First we perform a contact $(\pm1)$-surgery along a Legendrian knot in $(M,\xi)$ to obtain an overtwisted contact structure $\xi_{ot}$ on $M$. (For example one can perform a contact $(+1)$-surgery on a once stabilizied Legendrian unknot in a Darboux ball.) Next we choose a loose Legendrian realization $\mathcal L$ of $\mathcal K$ in $(M,\xi_{ot})$. Since there is an overtwisted disk in the complement of $\mathcal{L}$, we can stabilize or take connected sums of the overtwisted disk to realize any contact framing on each component of $\mathcal L$. Thus we can assume that contact surgery with non-zero surgery coefficient (with $(\pm1)$ surgery coefficient) on $\mathcal L$ yields an overtwisted contact structure $\xi'_{ot}$ on $N$.

We choose a transverse knot $T$ inside $(N,\xi')$ such that for some choice of spin structure $\mathfrak{s}$ on $N$ we have
\begin{align*}
    [T]=\Gamma(\xi',\mathfrak s)-\Gamma(\xi'_{ot},\mathfrak s) \in H_1(N;\Z).
\end{align*}
By Lemma~\ref{lem:effect_of_contact surgery} the overtwisted contact structure $\xi'_T$ obtained by a Lutz twist on $T$ has the same $\Gamma$-invariant as $\xi'_{ot}$. By \cite{DingGeigesStipsciz} this Lutz twist can be realized by two contact $(+1)$-surgeries.

Now any two overtwisted contact structures on the same $3$-manifold that have equal $\Gamma$-invariants are related by taking a connected sum with an overtwisted contact structure on $S^3$, see for example~\cite{Ding_Geiges_Stipsciz_surgery_diagrams}. Since any overtwisted contact structure on $S^3$ can be obtained from $(S^3,\xi_{std})$ by at most two contact $(\pm1)$-surgeries~\cite{KegelEtnyreOnaran} it follows that the surgery distance of $(N,\xi'_{ot})$ and $(N,\xi'_T)$ is at most two.

So in total we have performed one contact surgery to make $(M,\xi)$ overtwisted, $s_{\mathbb{Z}}(M,N)$ or $s(M,N)$ many contact surgeries to get $(N,\xi_{ot}')$, two more contact surgeries to reach $(N,\xi'_T)$, and two more contact surgeries to undo the Lutz twist.
\end{proof}

\begin{remark}
    We also remark that the above proof also shows that if $(M,\xi)$ is already overtwisted then the upper bounds can be reduced by one. 
\end{remark}

\section{Contact surgery distances of lens spaces} \label{sec:lens_spaces}

The contact surgery distance between two contact manifolds is bounded from below by the topological surgery distance of the underlying smooth manifolds. However, the topological surgery distance is a quantity that is difficult to compute in general. Sometimes it is possible to compute lower bounds for the contact surgery distance even if the topological surgery distance is not known. We demonstrate this below for the case of tight contact structures on lens spaces.

For coprime integers $p$ and $q$ define a lens space $L(p,q)$ to be the $3$-manifold obtained by $(-p/q)$-surgery on the unknot. Thus, lens spaces are exactly the $3$-manifolds that can be obtained by gluing together two solid tori. We conclude that any two distinct lens spaces have rational surgery distance $1$. From the classification of tight contact structures on lens spaces~\cite{Giroux_lens,Honda} it follows that any two lens spaces also have rational contact surgery distance one.

On the other hand, the integral surgery distance between lens spaces remains unclear. For example, it is not known if there is a lens space that cannot be obtained by two integral surgeries from $S^3$. However, there is the following simple homological obstruction for two lens spaces having integral surgery distance one.

\begin{proposition}\label{prop:top_dist_lens}
    If there exists an integral surgery from the lens space $L(p,q)$ to the lens space $L(p',q')$, then at least one of $p'q$ or $-p'q$ is a square $\mod p$ and at least one of $pq'$ or $-pq'$ is a square mod $ p'$.

    In particular, for any given prime $p$ with $p\equiv 1 \mod 4$ and any integer $p'$, we find a $q$ such that for every $q'$ coprime to $p'$ there exists no integral surgery from $L(p,q)$ to $L(p',q')$.
\end{proposition}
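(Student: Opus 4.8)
The plan is to analyze how the first homology changes under an integral surgery and then extract a quadratic-residue obstruction from the gluing data. First I would set up the surgery: suppose $N = L(p',q')$ is obtained from $M = L(p,q)$ by an integral Dehn surgery along a knot $K \subset M$ with integer coefficient. Writing the surgery algebraically, removing a tubular neighborhood of $K$ and regluing gives a presentation of $H_1(N;\Z)$ in terms of $H_1(M;\Z) \cong \Z_p$ and the homology class $[K]$. Concretely, if $[K] = m \in \Z_p$ generates the relevant part, the surgered manifold has $H_1(N;\Z)$ presented by a $2 \times 2$ (or suitably reduced) integer matrix whose determinant, up to sign, must equal $\pm p'$. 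The key observation is that $H_1(N;\Z) \cong \Z_{p'}$ forces a numerical compatibility between $p$, $p'$, and the class $[K]$.

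The second step is to make the quadratic-residue condition precise. The linking form on $H_1(L(p,q);\Z) \cong \Z_p$ is the nondegenerate symmetric form sending the generator to $q/p \in \Q/\Z$ (up to the usual sign conventions), and similarly $L(p',q')$ carries the form $q'/p'$. An integral surgery along a knot $K$ gives a cobordism $W$ between $M$ and $N$ built from a single $2$-handle; the intersection form on $W$ relates the two linking forms. I would track how the linking form of $L(p,q)$ restricts and transforms: the presence of an integral (as opposed to rational) surgery means the cobordism has a particularly simple intersection form, and comparing the linking forms on the two boundary components yields that $p'q$ (equivalently $pq'$) must be a square modulo the other parameter, up to sign. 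The sign ambiguity $\pm$ is exactly what produces the ``at least one of $p'q$ or $-p'q$'' phrasing, coming from the orientation/sign convention in identifying the linking forms.

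For the particular case, I would invoke quadratic reciprocity. Fix a prime $p \equiv 1 \pmod 4$. For such $p$, the Legendre symbol satisfies $\left(\tfrac{-1}{p}\right) = 1$, so $-1$ is a quadratic residue mod $p$; thus the sign ambiguity collapses and the condition ``$p'q$ or $-p'q$ is a square mod $p$'' reduces to ``$p'q$ is a square mod $p$.'' Since the quadratic residues form an index-$2$ subgroup of $(\Z/p)^\times$, I can choose $q$ coprime to $p$ so that the residue class of $q$ is a nonsquare mod $p$. Then for any $q'$ coprime to $p'$, the product $p'q$ fails to be a square mod $p$ precisely when $p'$ is a square mod $p$; to guarantee failure for \emph{every} $q'$, I would choose $q$ so that $q \cdot p'$ is a nonsquare regardless of the residue of $p'$, which is arranged by selecting $q$ in the appropriate coset of the squares determined by the fixed $p'$. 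This violates the necessary condition from the first two steps, so no integral surgery $L(p,q) \to L(p',q')$ can exist.

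The main obstacle I anticipate is getting the sign and inversion conventions exactly right in the linking-form comparison — in particular, carefully distinguishing $q/p$ from $-q/p$ and $q$ from its inverse mod $p$, since the statement mixes $p'q$ with $pq'$ on the two sides. This bookkeeping is where the quadratic-reciprocity input (acknowledged in the paper's acknowledgments) does the real work, and it is easy to land on the wrong residue symbol if the identification of the two lens spaces' linking forms under the surgery cobordism is not pinned down precisely.
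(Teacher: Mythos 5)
Your overall strategy coincides with the paper's: extract a homological obstruction forcing $\pm p'q$ to be a quadratic residue mod $p$, then kill both signs at once using $\left(\frac{-1}{p}\right)=1$ for $p\equiv 1 \pmod 4$ and a choice of $q$ in the non-residue coset determined by $p'$. Your treatment of the second part is essentially identical to the paper's and is correct (modulo the implicit assumption $p\nmid p'$, which the paper also makes silently); note that no genuine quadratic reciprocity is needed, only the first supplement and multiplicativity of the Legendre symbol.

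The gap is in the first part. The paper derives the residue condition concretely: it writes $L(p,q)$ as $(-p/q^*)$-surgery on an unknot $U\subset S^3$ (with $q^*$ the inverse of $q$ mod $p$), pushes the surgery knot $K$ off the surgery solid torus so that it becomes a knot in the unknot exterior with $lk(U,K)=l$, and then computes the order of the first homology of the surgered manifold as $|pn+q^*l^2|$; setting this equal to $p'$ gives $\pm p'q\equiv l^2 \pmod p$, and the condition mod $p'$ follows by running the same argument in the other direction using the dual knot. You instead assert that ``comparing the linking forms on the two boundary components'' of the $2$-handle cobordism ``yields'' the condition. That assertion is exactly the content to be proved, and your outline never exhibits the mechanism that produces a square: writing $[K]=l\cdot c$ with $c$ a generator of $H_1(L(p,q))\cong\Z_p$, bilinearity gives $\lambda([K],[K])=l^2\lambda(c,c)=\mp q^*l^2/p\in\Q/\Z$, and one must then prove that an integral surgery producing a manifold with $|H_1|=p'$ forces $\lambda([K],[K])=\pm p'/p$ in $\Q/\Z$ --- which, once unwound, is the same determinant computation the paper performs. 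So your linking-form route can be completed (it is the classical Fintushel--Stern-style argument that the paper recovers in the case $p'=1$), but as written the crucial step is a restatement of the goal rather than a proof; the sign bookkeeping you flag as the main difficulty is in fact the easy part. One further error: the two conditions (``$p'q$ is a square mod $p$'' and ``$pq'$ is a square mod $p'$'') are not ``equivalent,'' as your parenthetical claims --- they are independent necessary conditions, one from each direction of the surgery.
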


Note that for $p'=1$ we recover Proposition 1 of~\cite{Fintushel_Stern}. For the proof of Proposition~\ref{prop:top_dist_lens} we need a bit of background on quadratic reciprocity, which we recall first.

For an odd prime $p$, we define the \textit{Legendre symbol} 
\begin{align*}
    \left(\frac{q}{p}\right):=\begin{cases} 1  &\text{if }q\equiv l^2 \mod p \text{ for some integer }l,\\ -1  &\text{if }q\nequiv l^2 \mod p \text{ for any integer }l.\end{cases}
\end{align*}
We have the following computation rules
\begin{align*}
    &\left(\frac{-1}{p}\right)=\begin{cases} 1  &\text{if }p\equiv 1 \mod 4 ,\\ -1  &\text{if }p\equiv 3 \mod 4 \end{cases},\\
    &\left(\frac{q}{p}\right)\left(\frac{q'}{p}\right)=\left(\frac{qq'}{p}\right),\\
    &\sum_{q=1}^{p-1}\left(\frac{q}{p}\right)=0.
\end{align*}
The last equation says that half of the values of $q$ are squares mod $p$ and the other half are not.

We also recall the classification of lens spaces. Two lens spaces $L(p,q)$ and $L(p,q')$ are orientation-preservingly diffeomorphic if and only if $q\equiv q' \mod p$ or $q'$ is the multiplicative inverse of $q$ mod $p$. In particular, we can always consider $q$ mod $p$ and by possibly changing the orientation and excluding $S^3$ and $S^1\times S^2$ we can assume $0<q<p$.

\begin{proof}[Proof of Proposition~\ref{prop:top_dist_lens}]
    We write $q^*$ for the multiplicative inverse of $q$ mod $p$. By the classification of lens spaces we have that $L(p,q)$ is orientation-preservingly diffeomorphic to $L(p,q^*)$. If there is an integeral surgery from $L(p,q)$ to $L(p',q')$ then we can find an integer $n$ and a knot $K$ in the complement of the unknot $U$ in $S^3$ such that $(-p/{q^*})$-surgery on $U$ followed by $n$-surgery on $K$ yields $L(p',q')$. The homology of the latter manifold is isomorphic to $\Z_{p'}$. On the other hand we can compute the homology of the surgery description to have order
    \begin{equation*}
        \left\vert \det \begin{pmatrix}
-p & l\\
q^*l & n
\end{pmatrix}\right\vert=pn+q^*l^2,
    \end{equation*}
    where $l$ denotes the linking number of $U$ and $K$. Thus it follows that $\pm p'\equiv q^*l^2\mod p$ and thus $\pm p'q\equiv l^2\mod p$. If we reverse the roles of $L(p,q)$ and $L(p',q')$ we get that also $\pm pq'$ is a square mod $p'$.

    For the second statement, we choose $q$ such that 
    \begin{equation*}
        \left(\frac{p'q}{p}\right)=\left(\frac{p'}{p}\right)\left(\frac{q}{p}\right)=-1,
    \end{equation*}
    which is possible since half the values of $q$ are squares mod $p$ and the other half are not. Then we compute that also
    \begin{equation*}
        \left(\frac{-p'q}{p}\right)=\left(\frac{-1}{p}\right)(-1)=-1,
    \end{equation*}
    since $p\equiv 1 \mod 4$. Thus, both $p'q$ and $-p'q$ are not squares mod $ p$ and we can apply the first part of the proposition.
\end{proof}

In summary, we can easily construct infinitely many pairs of lens spaces with integer surgery distance larger than one. Nevertheless in general the integral surgery distance of lens spaces remains unclear. However, using the extra rigidity from tight contact structures, we are able to demonstrate the existence of infinitely many pairs of tight contact lens spaces that have contact $(\pm1)$-surgery distance larger than one. We point out that the obstruction for the integer surgery distance from Proposition~\ref{prop:top_dist_lens} does not apply in these cases.

\begin{theorem} \label{thm:lens_spaces_lower_bound}
    There exist infinitely many pairs of lens spaces $L(p,q)$ and $L(p',q')$ that are not obstructed from being related by a single integer surgery via Proposition~\ref{prop:top_dist_lens}, but such that for all tight contact structures $\xi$ and $\xi'$ on $L(p,q)$ and $L(p',q')$, we have
    $$cs_{\pm1}\big((L(p,q),\xi),(L(p',q'),\xi')\big)>1.$$
\end{theorem}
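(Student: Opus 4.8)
The plan is to argue by contradiction: assume $cs_{\pm1}\big((L(p,q),\xi),(L(p',q'),\xi')\big)=1$ for some tight $\xi,\xi'$ and derive a constraint that the chosen pairs violate. The first step is purely formal and uses the Cancellation Lemma (Lemma~\ref{lem:cancelation}). Distance one means $(L(p',q'),\xi')$ is obtained from $(L(p,q),\xi)$ by a single contact $(+1)$- or $(-1)$-surgery along a Legendrian knot $K$. In the $(+1)$-case, the cancellation lemma lets me read the surgery backwards as a contact $(-1)$-surgery along the dual knot $K^\ast$ from $(L(p',q'),\xi')$ to $(L(p,q),\xi)$. Hence in either case there is a single contact $(-1)$-surgery (a Legendrian surgery) between the two tight contact lens spaces, and I may assume the source is the one I find most convenient.

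Next I would exploit fillability. Every tight contact structure on a lens space is Stein fillable, and Legendrian surgery preserves Stein fillability by attaching a Weinstein $2$-handle; thus such a surgery upgrades a Stein filling of the source to a Stein filling of the target built by attaching a single $2$-handle, i.e.\ with second Betti number increased by one and signature changed by $\pm1$. This is exactly the regime in which the contact-geometric lower bounds of Plamenevskaya~\cite{Plamenevskaya} apply: they constrain the invariants of tight (Stein fillable) contact structures on lens spaces in a manner that a single handle attachment cannot realize. Concretely, I would compare the admissible invariants on the two sides (the change in the $d_3$-invariant across the surgery being computable from Lemma~\ref{lem:d3}) and invoke Plamenevskaya's bounds to conclude that no single Legendrian surgery connects the chosen pair, forcing $cs_{\pm1}>1$.

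It then remains to produce infinitely many pairs that are simultaneously (i) \emph{not} obstructed by Proposition~\ref{prop:top_dist_lens} and (ii) subject to the contact obstruction above. For (i) I would choose $p,q,p',q'$ by quadratic reciprocity so that the necessary residue condition holds---$\pm p'q$ a square $\bmod\ p$ and $\pm pq'$ a square $\bmod\ p'$---in deliberate contrast to the construction in Proposition~\ref{prop:top_dist_lens}, where that condition is forced to fail; running this selection over infinitely many primes $p\equiv1\ (\mathrm{mod}\ 4)$ yields an infinite family. The subtle point is to keep these families inside the range where the Plamenevskaya bounds are violated by \emph{every} choice of tight $\xi$ and $\xi'$, since each lens space carries several tight contact structures and the conclusion must hold uniformly over all of them.

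The main obstacle I anticipate is precisely this contact-geometric step: showing that the single-handle Stein cobordism is genuinely obstructed for all tight structures at once, independently of the smooth surgery distance. The topological half (arranging the Legendre-symbol conditions so Proposition~\ref{prop:top_dist_lens} is silent) is a clean number-theoretic exercise, and the reduction to a single Legendrian surgery is routine. The real work lies in extracting from Plamenevskaya's results a bound rigid enough to exclude a single contact $(-1)$-surgery, and in matching the numerics of the chosen family so that this bound is violated regardless of which of the finitely many tight contact structures on $L(p,q)$ and $L(p',q')$ one starts from.
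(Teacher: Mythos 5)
Your skeleton matches the paper's proof---argue by contradiction, use the Cancellation Lemma to turn a single contact $(\pm1)$-surgery into a single Legendrian ($(-1)$-) surgery between the two tight lens spaces, invoke Plamenevskaya, and arrange quadratic-residue conditions so that Proposition~\ref{prop:top_dist_lens} is silent---but the central step is left as an unproven hope, and your guess at its content points in the wrong direction. Plamenevskaya's obstruction, as used in the paper, is not a bound on invariants of Stein fillings or a $d_3$-comparison that you could extract from Lemma~\ref{lem:d3}; it is a ready-made number-theoretic criterion: there is no single Legendrian surgery from a tight $(L(p,q),\xi)$ to a tight $(L(p',q'),\xi')$ (with $0<q<p$, $0<q'<p'$) provided $p$ and $p'$ are coprime, $-pq'$ is a square mod $p'$, and $-1$ is \emph{not} a square mod $p'$. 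The paper's proof consists of quoting this criterion and then doing elementary Legendre-symbol arithmetic; without stating the criterion (or proving a substitute), your argument has no obstruction to apply, and a $d_3$-invariant comparison alone cannot supply one, since after a single surgery the $d_3$-invariants on the two sides are not constrained in any way that produces a contradiction.

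Two further concrete errors would sink the proof even granting the criterion. First, you write that you ``may assume the source is the one I find most convenient.'' You may not: the direction of the resulting Legendrian surgery is dictated by the sign of the original contact surgery (a $(+1)$-surgery reverses to a Legendrian surgery \emph{from} $(L(p',q'),\xi')$, a $(-1)$-surgery is already a Legendrian surgery \emph{from} $(L(p,q),\xi)$), so you must obstruct \emph{both} directions simultaneously; this is exactly why the paper needs the symmetric list of five conditions, including that $-1$ is not a square mod $p$ \emph{and} not a square mod $p'$. Second, your choice of primes $p\equiv 1\pmod{4}$ is incompatible with that requirement: for such $p$, $-1$ \emph{is} a square mod $p$, so Plamenevskaya's criterion fails in the direction targeting $L(p,q)$ and a contact $(+1)$-surgery cannot be excluded. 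The correct choice (as in the paper) is distinct primes $p\equiv p'\equiv 3\pmod{4}$, with $q,q'$ chosen via $\left(\frac{q}{p}\right)=-\left(\frac{p'}{p}\right)$ and $\left(\frac{q'}{p'}\right)=-\left(\frac{p}{p'}\right)$, so that $-p'q$ is a square mod $p$ and $-pq'$ is a square mod $p'$ (which simultaneously makes Proposition~\ref{prop:top_dist_lens} inapplicable), with Dirichlet's theorem supplying infinitely many such primes.
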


\begin{proof}
We use a criterion of Plamenevskaya \cite{Plamenevskaya}, which says that we cannot obtain a tight $(L(p',q'),\xi')$ from a tight $(L(p,q),\xi)$ by a single Legendrian surgery (normalized such that $0<q<p$ and $0<q'<p'$) if all of the following three conditions are fulfilled
\begin{itemize}
    \item $p$ and $p'$ are coprime,
    \item  $-pq'$ is a square mod $p'$, and
    \item $-1$ is not a square mod $p'$.
\end{itemize}
By the cancellation lemma, it follows that two tight lens spaces $(L(p,q),\xi)$ and $(L(p',q'),\xi')$ have $cs_{\pm1}>1$ if all of the following conditions are fulfilled
\begin{enumerate}
    \item $p$ and $p'$ are coprime,
    \item  $-pq'$ is a square mod $p'$, 
    \item  $-p'q$ is a square mod $p$, 
    \item $-1$ is not a square mod $p'$, and
    \item $-1$ is not a square mod $p$.
\end{enumerate}
Note that the second and third conditions show directly that Proposition~\ref{prop:top_dist_lens} does not apply to show that the integer surgery distance is larger than one. So it remains to demonstrate that there are infinitely many pairs of coprime integers $(p,q)$ and $(p',q')$ with $0<q<p$ and $0<q'<p'$ that fulfill Conditions $(1)$--$(5)$. 

For that, let $p$ and $p'$ be different primes with $p\equiv p'\equiv 3 \mod 4$. Then Condition $(1)$ is automatically fulfilled. Since $p\equiv 3 \mod 4$, we also have $\left(\frac{-1}{p}\right)=-1$ and thus $-1$ is not a square mod $p$. This concludes Condition $(5)$, and the same reasoning applies to conclude Condition $(4)$. 

Since half of the values of $q$ are squares mod $p$ and the other half are not, we can choose $q$ and $q'$ such that 
\begin{equation*}
   \left(\frac{q}{p}\right)=-\left(\frac{p'}{p}\right) \,\text{ and }\, \left(\frac{q'}{p'}\right)=-\left(\frac{p}{p'}\right).
\end{equation*}
Then we compute that
    \begin{equation*}
   \left(\frac{-p'q}{p}\right)=\left(\frac{-1}{p}\right) \left(\frac{p'}{p}\right) \left(\frac{q}{p}\right) =(-1)\left(\frac{p'}{p}\right) (-1)\left(\frac{p'}{p}\right)=(\pm1)^2=1 
\end{equation*}
and thus $-p'q$ is a square mod $p$. The same applies to show that Condition $(2)$ holds.

Finally, we note that Dirichlet's theorem implies that there are infinitely many primes $p$ with $p\equiv 3 \mod 4$, which gives us infinitely many pairs of such lens spaces.
\end{proof}

\let\MRhref\undefined
  \bibliographystyle{hamsalpha}
	\bibliography{bibliography.bib}
	
\end{document}